\newtheorem{theorem}{Theorem}[section]
\newtheorem*{theorem*}{Theorem}
\newtheorem{xrem}{Remark}
\newtheorem{corollary}[theorem]{Corollary}
\newtheorem{lemma}[theorem]{Lemma}
\newtheorem{proposition}[theorem]{Proposition}
\newtheorem{fact}[theorem]{Fact}
\newtheorem{problem}{Problem}[section]
\theoremstyle{definition}
\newtheorem{definition}[theorem]{Definition}
\newenvironment{remark}[1][Remark]{\begin{trivlist}
\item[\hskip \labelsep {\bfseries #1}]}{\end{trivlist}}
\newcommand{\ee}{\varepsilon}
\newcommand{\nn}{\mathbb{N}}
\newcommand{\rr}{\mathbb{R}}
\newcommand{\sss}{\mathcal{S}}
\begin{document}

\title[Genericity and Universality for Operator Ideals]
{Genericity and Universality for Operator Ideals}
\author[Kevin Beanland and Ryan M. Causey]
{Kevin Beanland and Ryan M. Causey}
\address [Kevin Beanland] {Department of Mathematics, Washington and Lee University, Lexington, VA 24450.} \email{beanlandk@wlu.edu}\
\address [Ryan M. Causey]{Department of Mathematics, Miami University of Ohio} \email{}


\keywords{\LaTeX, Ordinal indices, Polish spaces }

\subjclass[2000]{49B28; 03E1525}

\begin{abstract}
A bounded linear operator $U$ between Banach spaces is universal for the complement of some operator ideal $\mathfrak{J}$ if it is a member of the complement and it factors through every element of the complement of $\mathfrak{J}$. In the first part of this paper, we produce new universal operators for the complements of several ideals and give examples of ideals whose complements do not admit such operators. In the second part of the paper, we use Descriptive Set Theory to study operator ideals. After restricting attention to operators between separable Banach spaces, we call an operator ideal $\mathfrak{J}$ generic if anytime an operator $A$ has the property that every operator in $\mathfrak{J}$ factors through a restriction of $A$, then every operator between separable Banach spaces factors through a restriction of $A$. We prove that many of classical operator ideals are generic (i.e. strictly singular, weakly compact, Banach-Saks) and give a sufficient condition, based on the complexity of the ideal, for when the complement does not admit a universal operator. Another result is a new proof of a theorem of M. Girardi and W.B. Johnson which states that there is no universal operator for the complement of the ideal of completely continuous operators.  
\end{abstract}

\maketitle
 \begin{center}
    \emph{In memory of Joe Diestel (1943 - 2017)}
\end{center}

\section{Introduction}

The study of operator ideals, as initiated by A. Pietsch \cite{Pie-book}, has played a prominent role in the development of the theory of Banach spaces and operators on them. To quote J. Diestel, A. Jarchow and A. Pietsch in \cite{DJP-Handbook}:

\begin{center}
   ``\emph{the language of operator ideals offers manifold opportunities to describe specific properties of Banach spaces, giving this landscape a much more colourful view.}" 
\end{center}

The current paper contains several new results related to operator ideals and well as a new framework, based on Descriptive Set Theory, for studying operator ideals. 
Classical results of J. Lindenstrauss and A. Pe\l czy\'{n}ski \cite{LindPel-Studia} and W.B. Johnson \cite{Jo-Colloquium} state that the summing operator $(a_i)_{i=1}^\infty \mapsto (\sum_{i=1}^n a_i)_{n=1}^\infty$ from $\ell_1 \to \ell_\infty$ and the formal identity from $\ell_1 \to \ell_\infty$, are universal for the complement of the operator ideals of weakly compact and compact operators, respectively. Recall that a bounded linear operator between Banach spaces $U$ is universal for the complement of some operator ideal $\mathfrak{J}$ if $U$ factors through every element of the complement of $\mathfrak{J}$. One of the objectives of the current paper is to prove the existence or non-existence of a universal operators for the complements of several operator ideals. The scope of this paper, however, is not limited to these investigations. Indeed, a new observation we make, that we believe is of interest, relates the existence of a universal operator for the complement of an ideal to the Borel complexity of the ideal as a subset of the standard Borel space of operators between separable Banach spaces. All of these terms are explained in section 5. After setting up the descriptive set theoretic background for establishing this connection, we observe, in analogy to \cite{ADo-Advances}, that it is natural to consider whether a given operator ideal is generic or strongly bounded. Therefore this work has two main objectives, studying the existence of a universal operator for complements of ideals and genericity of ideals. The current work, particularly the results in section 5, is a natural extension of the work on the universality of spaces in the language of operator ideals.

In section 2 we provide the basic definitions and, in several subsections, define the many ideals we are considering. Many of the classes of operators first appeared in a recent paper \cite{BCFrWa-JFA}, but rather than refer the reader to this paper for the definitions and properties, we have opted to include this information. For each subclass defined, we list what is known  regarding universal operators, space factorization, and genericity. Along the way several open problems are presented which we hope will motivate more research. In section 3, we provide examples of operator ideals whose complements have universal operators. In section 4 we give examples of operator ideals whose complements do not have universal operators. In section 5 we introduce the necessary descriptive set theoretic definitions and prove many of the the foundational results regarding the complexity of the the factorization relations. The main result in the section shows several natural ideals are indeed generic and strongly bounded. 

\section{Basic Definitions, Operator Ideals, Subclasses and Universal Operators}

Let $\textbf{Ord}$ be the class of ordinals and $\omega_1$ denote the first uncountable ordinal. For a set $\Lambda$, let $\Lambda^{<\mathbb{N}}$ denote the set of all finite sequences whose members lie in $\Lambda$,  including the empty sequence, denoted $\varnothing$. We refer to the members of $\Lambda^{<\nn}$ as \emph{nodes}. We order $\Lambda^{<\mathbb{N}}$ by extension. That is, we write  $s \preceq t$ if $s$ is an initial segment of $t$. Let $|s|$ denote the length of the node $s$ and for $i\leqslant |s|$, let $s|_i$ be the initial segment of $s$ having length $i$. A set $T \subset \Lambda^{<\mathbb{N}}$ is called a \emph{tree} on $\Lambda$ if it  downward closed with respect to the order $\preceq$. For $t \in T$, let $T(t)=\{s \in \Lambda^{<\mathbb{N}}:t\smallfrown s \in T\}$. Here, $t\smallfrown s$ denotes the concatenation of $t$ and $s$. Note that if $T$ is a tree, so is $T(t)$.  The set $MAX(T)$ consists of all $t \in T$ so that $T(t)=\{\emptyset\}$. 

Given a tree $T\subset \Lambda^{<\nn}$, let $T'=T \setminus MAX(T)$. Define the set of higher order derived tree $T^\xi$ for $\xi \in \textbf{Ord}$ as follows. We let $$T^0=T,$$  $$T^{\xi+1}=(T^\xi)',$$  $$T^\xi=\cap_{\zeta <\xi} T^\zeta, \text{\ \ \ }\xi\text{\ is a limit ordinal.}$$ We say $T$ is \emph{well-founded} if there exists an ordinal $\xi$ such that $T^\xi=\varnothing$, and in this case we let $o(T)$ be the minimum such ordinal. If $T$ is such that $T^\xi\neq \varnothing$ for all $\xi \in \textbf{Ord}$, then we say $T$ is \emph{ill-founded} and write $o(T)=\infty$. We note that $T\subset \Lambda^{<\nn}$  is ill-founded if and only if there exists a sequence $(t_i)_{i=1}^\infty \subset T$ such that $t_1\prec t_2\prec \ldots$ if and only if there exists a sequence $(x_i)_{i=1}^\infty\subset \Lambda$ such that $(x_i)_{i=1}^n\in T$ for all $n\in \nn$. 
 
Throughout,  $2^\nn$ will denote the Cantor set and $C(2^\nn)$ will denote the space of continuous functions on the Cantor set. Moreover, $\textbf{Ban}$ will denote the class of all Banach spaces and $\textbf{SB}$ will denote the set of all closed subsets of $C(2^\nn)$ which are linear subspaces.   We will let $\mathfrak{L}$ denote the class of all operators between Banach spaces and $\mathcal{L}$ the class of operators between separable Banach spaces.  Given $X,Y\in \textbf{Ban}$, we let $\mathfrak{L}(X,Y)$ denote all operators from $X$ into $Y$.     Given a subclass $\mathfrak{J}$ of $\mathfrak{L}$ and $X,Y\in \textbf{Ban}$,  we let $\mathfrak{J}(X,Y)=\mathfrak{J}\cap \mathfrak{L}(X,Y)$.  We recall that a subclass $\mathfrak{J}$ of $\mathfrak{L}$ is said to have the \emph{ideal property} if for any $W,X,Y,Z\in \textbf{Ban}$, any $C\in \mathfrak{L}(W,X)$, $B\in \mathfrak{J}(X,Y)$, and any $A\in \mathfrak{L}(Y,Z)$, $ABC\in \mathfrak{J}(W,Z)$.   We say a subclass $\mathfrak{J}$ of $\mathfrak{L}$ is an \emph{operator ideal} in the sense of Pietsch \cite{Pie-book} provided that 

\begin{enumerate}[(i)]\item $\mathfrak{J}$ has the ideal property, \item $I_\mathbb{R}\in\mathfrak{J}$, \item for every $X,Y\in \textbf{Ban}$, $\mathfrak{J}(X,Y)$ is a vector subspace of $\mathfrak{L}(X,Y)$. 
\end{enumerate}

If $\mathfrak{J}$ is an operator ideal, we denote the complement of $\mathfrak{J}$ by $\complement \mathfrak{J}$. We recall that $\mathfrak{J}$ is said to be \emph{injective} provided that whenever $X,Y,Z\in \textbf{Ban}$ and $j:Y\to Z$ is an isometric embedding, if $A:X\to Y$ is such that $jA\in \mathfrak{J}$, then $A\in \mathfrak{J}$. Let $\mbox{Space}(\mathfrak{J})$ be the $X \in \textbf{Ban}$ such that $I_X\in \mathfrak{J}$.  An ideal $\mathfrak{J}$ is called \emph{proper} if each $X \in \mbox{Space}(\mathfrak{J})$ is finite dimensional.  Let  $\mathfrak{J}^{super}$ denote the ideal of all $A:X  \to Y$ such that for any ultrafilter $\mathcal{U}$ on any set, the induced operator $A_\mathcal{U}:X_\mathcal{U} \to Y_\mathcal{U}$ is in $\mathfrak{J}$.

We will consider several subclasses of classical operator ideals. Suppose that  $\mathfrak{J}$ is a closed operator ideal and $(\mathfrak{J}_\xi)_{\xi<\omega_1}$ is a collection of subclasses of $\mathfrak{J}$ such that 
\begin{enumerate}
\item for each $\xi <\omega_1$, $\mathfrak{J}_\xi$ is closed, $\mathfrak{J}\setminus \mathfrak{J}_\xi$ is non-empty, and $\mathfrak{J}_\xi$ has the ideal property (Note: We do not require that each $\mathfrak{J}_\xi$ is an ideal),
\item  $\mathfrak{J}(X,Y) =\cup_{\xi <\omega_1} \mathfrak{J}_\xi(X,Y)$ for all separable Banach spaces $X$ and $Y$.
\end{enumerate}

\noindent In this case, we say $(\mathfrak{J}_\xi)_{\xi<\omega_1}$ \emph{separably refines} $\mathfrak{J}$.

If  $\mathfrak{J}$ is a closed operator ideal and $(\mathfrak{J}_\xi)_{\xi\in \textbf{Ord}}$ is a collection of subclasses of $\mathfrak{J}$ such that 
\begin{enumerate}
\item for each $\xi \in \textbf{Ord}$, $\mathfrak{J}_\xi$ is closed, $\mathfrak{J}\setminus \mathfrak{J}_\xi$ is non-empty, and $\mathfrak{J}_\xi$ has the ideal property,
\item $\mathfrak{J}(X,Y) =\cup_{\xi \in \textbf{Ord}} \mathfrak{J}_\xi(X,Y)$ for all  Banach spaces $X$ and $Y$.
\end{enumerate}
In this case, we say $(\mathfrak{J}_\xi)_{\xi\in \textbf{Ord}}$ \emph{refines} $\mathfrak{J}$.

Let $A:X \to Y$ and $B:Z \to W$ be operators. Then $A$ factors through $B$ if there are operators $C:X \to Z$ and $D:W \to Y$ so that $A=DBC$. Also we say that $A$ factors through a restriction of $B$ if there are subspaces $Z_1$ of $Z$ and $W_1$ of $W$ so that $B|_{Z_1}:Z_1 \to W_1$ and $A$ factors through $B|_{Z_1}$. Note that every operator $A:X \to Y$ between separable Banach spaces factors through a restriction of the identity on $C(2^\mathbb{N})$. For reference we isolate the following remark.

\begin{xrem}
Let $X,Z$ be Banach spaces. Then $I_X$ factors through a restriction of $I_Z$ if and only if $X$ is isomorphic to a subspace of $Z$, and $I_X$ factors through $I_Z$ if and only if $Z$ contains a complemented copy of $X$.  
\end{xrem}

Suppose $\mathfrak{J}\subset \mathfrak{L}$ is a class possessing the ideal property. Then $U:E \to F$ is \emph{universal} for $\complement \mathfrak{J}$ if $U$ factors through every element of $\complement \mathfrak{J}$. A subset $\mathcal{B}$ of $\complement \mathfrak{J}$ is \emph{universal} if for each $A \in \complement \mathfrak{J}$ there exists  $B \in \mathcal{B}$ so that $B$ factors through $A$. We say that \emph{there is no} $\complement \mathfrak{J}$-\emph{universal operator} if there is no $U$ in $\complement \mathfrak{J}$ which factors through every $A \in \complement \mathfrak{J}$. 


Consider the following easy, but useful, remark regarding universal operators.

\begin{xrem}\upshape If $\mathfrak{I},\mathfrak{J}$ are two classes of operators which possess the ideal property, then $\complement (\mathfrak{I}\cap \mathfrak{J})$ cannot have a universal factoring operator unless either $\mathfrak{I}\subset \mathfrak{J}$ or $\mathfrak{J}\subset \mathfrak{I}$.  Indeed, if $U$ is a universal factoring operator for $\complement(\mathfrak{I}\cap \mathfrak{J})$,  then we may assume without loss of generality that $U\in \complement\mathfrak{I}$. Then if $B\in \complement\mathfrak{J}$, $U$ factors through $B$, and $B\in \complement  \mathfrak{I}$. From this it follows that $\mathfrak{I}\subset \mathfrak{J}$.
\label{get away from me}
\end{xrem}

Let $\mathfrak{J}$ and $\mathfrak{M}$ be classes of operators with the ideal property. We say that $\mathfrak{J}$ has the $\mathfrak{M}$ \emph{space factorization property} if every $A \in \mathfrak{J}$ factors through a space $X \in \mbox{Space}(\mathfrak{M})$. When $\mathfrak{J}=\mathfrak{M}$ we say that $\mathfrak{J}$ has the space factorization property. Whether a given operator ideal has the space factorization property is an extensively studied topic \cite{DFJP-JFA,Hein-JFA}.
 Suppose $U\in \mathcal{L}$ is such that $A \in \mathfrak{J}\cap \mathcal{L}$ factors through a restriction of $U$. Then we say that $\mathfrak{J}$ is \emph{Bourgain-generic} (or just generic) if the identity on $C(2^\mathbb{N})$ (and therefore every operator $A \in \mathcal{L}$) also factors through $U$. Note that this is the an operator version of space the notion of Bourgain-generic defined in \cite{ADo-Advances}. We also note that Johnson and Szankowski \cite{JoSz-JFA} showed that there is no operator between separable Banach spaces through which every compact operator factors. Consequently, the weaker notion of factoring through a restriction is necessary to make this definition non-trivial for an operator ideal containing the compact operators.  

In what follows, for any infinite subset $M$ of $\nn$, $[M]$ will denote the infinite subsets of $M$. Let us recall the Schreier families from \cite{AlA-Dissertationes}. The Schreier families will be treated both as collections of finite subsets of $\nn$ as well as collections of empty or finite, strictly increasing sequences of natural numbers. The identification between subsets and strictly increasing sequences is the natural one obtained by listing the members of a set in strictly increasing order. We note that, since $[\nn]^{<\nn}$ is identified with a set of finite sequences of natural numbers, the downward closed subsets of $[\nn]^{<\nn}$ are trees with respect to the previously introduced initial segment ordering $\preceq$. Given a subset $\mathcal{G}$ of $[\nn]^{<\nn}$, we say $\mathcal{G}$ is \begin{enumerate}[(i)]\item \emph{hereditary} if $E\subset F\in \mathcal{G}$ implies $E\in \mathcal{G}$, \item \emph{spreading} if $(m_i)_{i=1}^k \in \mathcal{G}$ and $m_i\leqslant n_i$ for all $1\leqslant i\leqslant k$ imply $(n_i)_{i=1}^k\in \mathcal{G}$, \item \emph{compact} if it is compact in the identification $E\leftrightarrow 1_E\in \{0,1\}^\nn$, where $\{0,1\}^\nn$ has the product of the discrete topology, \item \emph{regular} if it is hereditary, spreading, and compact. 

\end{enumerate}

Given two subsets $E,F$ of $\nn$, we let $E<F$ denote the relationship that either $E=\varnothing$, $F=\varnothing$, or $\max E<\min F$. For $n\in \nn$, we let $n\leqslant E$ denote the relationship that either $E=\varnothing$ or $n\leqslant \min E$. 

We let $$\mathcal{S}_{0}=\{\varnothing\}\cup \{(n): n\in \nn\}.$$  Assuming that $\mathcal{S}_\xi$ has been defined for some ordinal $\xi<\omega_1$, let 
$$\mathcal{S}_{\xi+1}= \{\cup_{i=1}^n  E_i : n\leqslant E_1 < E_2 < \cdots <E_n, E_i \in \mathcal{S}_{\xi}\}\cup \{\emptyset\}.$$
If $\xi<\omega_1$,  is a limit ordinal, we fix a specific sequence $(\xi_n)_{n=1}^\infty$ with $\xi_n\uparrow \xi$ and define
$$\mathcal{S}_\xi = \{ E : \exists n \leqslant E \in \mathcal{S}_{\xi_n}\}\cup \{\emptyset\}.$$  Some of the results we cite regarding the Schreier families and related notions depend upon the specific sequence $(\xi_n)_{n=1}^\infty$, for which we refer the reader to \cite{AlA-Dissertationes}. For our purposes, it is sufficient to remark that for some specific choice of $(\xi_n)_{n=1}^\infty$, the stated results hold.

For an ordinal $0<\xi<\omega_1$, a bounded sequence $(x_n)$ in a Banach space $X$ is  an $\ell_1^\xi$ \emph{ spreading model} if there exists  $\delta>0$ such that for every scalar sequence $(a_i)$ and $F \in \mathcal{S}_\xi$,  
$$\delta \sum_{i\in F}|a_i| \leqslant \|\sum_{i\in F} a_i x_i\|.$$ For $0<\xi<\omega_1$, we say $(x_n)$ is a $c_0^\xi$ \emph{spreading model} if $\inf_i \|x_i\|>0$ and there exists a constant $\delta>0$ such that for every scalar sequence $(a_i)$ and $F\in \mathcal{S}_\xi$, $$\delta\|\sum_{i\in F} a_i x_i\|\leqslant \max_{i\in F}|a_i|.$$  We remark that, by Rosenthal's $\ell_1$ dichotomy and properties of $\ell_1^\xi$ spreading models, if $(x_n)$ is an $\ell_1^\xi$ spreading model, then either some subsequence of $(x_n)$ is equivalent to the $\ell_1$ basis, or there exist $p_1<q_1<p_2<q_2<\ldots$ such that $(x_{p_n}-x_{q_n})$ is a weakly null $\ell_1^\xi$ spreading model. Furthermore, it is easy to see that any $c_0^\xi$ spreading model is a weakly null sequence.

A sequence $(x_n)$ is $\mathcal{S}_\xi$-\emph{unconditional} if there exists $\delta>0$ such that for every scalar sequence $(a_i)$ and $F \in \mathcal{S}_\xi$,  
$$\delta\|\sum_{i\in F} a_i x_i\| \leqslant \|\sum_{i=1} ^\infty a_i x_i\|.$$

We recall the repeated averages hierarchy from \cite{AMerTs-Israel}: For each countable ordinal $\xi$ and $L \in [\mathbb{N}]$,  the sequence $(\xi_n^L)_{n=1}^\infty$ is a convex block sequence of $(e_i)$ such that $L=\cup_{n=1}^\infty \mbox{supp }\xi_n^L $ and $\mbox{supp }\xi^L_n \in MAX(\mathcal{S}_{\xi})$. A sequence $(x_n)$ in a Banach space $X$ is $(\xi,L)$-convergent to a vector $x \in X$ if 
$$\|\sum_{i}\xi^L_n(i) x_i - x\| \to 0,~n \to \infty$$
Note that if $(x_n)$ is $(\xi,L)$ convergent to $0$ then it is weakly null.
Recall the following deep theorem (see \cite{AGas-TAMS,AGodR-book,AMerTs-Israel}).

\begin{theorem}
Let $(x_n)$ be a weakly null sequence in a Banach space $X$ and $\xi$ be a countable ordinal. Then one of the following mutually exclusive properties holds:
\begin{enumerate}
    \item For each $M \in [\mathbb{N}]$ there exists and $L \in [M]$ so that $(x_n)$ is $(\xi,L)$ convergent to $0$.
    \item There exists an $M\in [\mathbb{N}]$ so $(x_i)_{i \in M}$ is  an $\ell_1^\xi$ spreading model and is $\mathcal{S}_\xi$-unconditional.
\end{enumerate}
\label{the dichotomy}
\end{theorem}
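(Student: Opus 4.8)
The plan is partly a concession and partly a blueprint: Theorem~\ref{the dichotomy} is a known deep result, being essentially the output of the repeated‑averages analysis of Argyros--Mercourakis--Tsarpalias \cite{AMerTs-Israel} (see also \cite{AGas-TAMS,AGodR-book}), so in a paper of this type one would simply invoke it; but here is how I would organize a self‑contained argument. The \emph{mutual exclusivity} half is quick. If $(x_i)_{i\in M}$ is an $\ell_1^\xi$ spreading model with constant $\delta>0$, then for every $L\in[M]$ and every $n$ the repeated average $\xi_n^L$ is a convex combination of basis vectors supported on $\mbox{supp}(\xi_n^L)\in MAX(\mathcal S_\xi)\subset\mathcal S_\xi$; since $\mathcal S_\xi$ is spreading one transports the lower estimate from $M$ to $L$ and gets $\|\sum_i\xi_n^L(i)x_i\|\geqslant\delta$ for all $n$, so $(x_n)$ is never $(\xi,L)$‑convergent to $0$, i.e.\ (1) fails whenever (2) holds.

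For the ``at least one holds'' half I would isolate a single \emph{extraction lemma}: for bounded weakly null $(x_n)$ and every $M\in[\mathbb N]$, either there is $M'\in[M]$ with $(x_i)_{i\in M'}$ an $\ell_1^\xi$ spreading model, or there is $L\in[M]$ with $(x_n)$ $(\xi,L)$‑convergent to $0$. Granting this, the theorem follows formally: if (1) fails, fix $M_0$ admitting no suitable $L$; applied to $M_0$ the lemma must return an $\ell_1^\xi$ spreading model, which after one further refinement (to install $\mathcal S_\xi$‑unconditionality, see below) is alternative (2); if instead (2) fails, no $M$ carries an $\ell_1^\xi$ spreading model, so the lemma supplies a suitable $L$ inside every $M$, which is alternative (1).

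The extraction lemma I would prove by transfinite induction on $\xi$, using the recursive structure of the repeated‑averages hierarchy: an $\mathcal S_{\zeta+1}$‑average is a convex combination of consecutive $\mathcal S_\zeta$‑averages, and for limit $\xi$ an $\mathcal S_\xi$‑average agrees on an initial block with an $\mathcal S_{\xi_n}$‑average for the fixed sequence $\xi_n\uparrow\xi$. The case $\xi=0$ is trivial, the $\mathcal S_0$‑averages being the terms $x_n$ themselves (so the alternatives read $\|x_n\|\to 0$ versus $\inf_{i\in M'}\|x_i\|>0$). For $\xi=\zeta+1$, apply the inductive dichotomy at $\zeta$: if every subset yields $\mathcal S_\zeta$‑averages of arbitrarily small norm, then averaging those small blocks and diagonalising over $\varepsilon\downarrow 0$ produces $(\zeta+1,L)$‑convergence to $0$; otherwise pass to a subset on which the $\mathcal S_\zeta$‑averages are bounded below, form the (still weakly null) sequence of these averages, and run the dichotomy once more — a null outcome again yields $(\zeta+1,L)$‑convergence of $(x_n)$, while an $\ell_1$‑spreading‑model outcome for the averaged sequence unwinds, via spreading of $\mathcal S_\zeta$ and $\mathcal S_{\zeta+1}$, to an $\ell_1^{\zeta+1}$ spreading model of $(x_n)$. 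The limit case is handled analogously by a diagonal argument over the level‑$\xi_n$ dichotomies. The step I expect to be the main obstacle is precisely the Ramsey‑type stabilisation buried in all of this — converting ``no $\ell_1^\xi$ spreading model anywhere in $M$'' into ``small $\mathcal S_\xi$‑averages along some $L$'' — together with the bookkeeping that keeps the constant $\delta$ and the admissibility of the iterated averages uniform through the transfinite recursion; this is where one appeals to Rosenthal's $\ell_1$ dichotomy, to Mazur/Pt\'ak convexity, and to a Galvin--Prikry/Nash-Williams selection.

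Finally, to secure the $\mathcal S_\xi$‑unconditionality asserted in (2), I would refine the $\ell_1^\xi$ spreading‑model subsequence once more by means of the ``convex unconditionality'' theorem of \cite{AMerTs-Israel}: every weakly null sequence has, for any prescribed constant, a convexly unconditional subsequence, and combining this with the already‑available lower $\ell_1^\xi$ estimate upgrades convex unconditionality to $\mathcal S_\xi$‑unconditionality on a further subsequence. Since neither the spreading‑model extraction nor the unconditionality extraction interferes with the other, both can be arranged simultaneously, completing the sketch.
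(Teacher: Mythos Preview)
The paper does not give its own proof of this theorem: it is introduced with ``Recall the following deep theorem (see \cite{AGas-TAMS,AGodR-book,AMerTs-Israel})'' and is simply invoked thereafter. You correctly identify this at the outset, so in that sense your proposal already matches the paper's treatment exactly.

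Your additional sketch goes well beyond what the paper does and is a reasonable outline of the Argyros--Mercourakis--Tsarpalias approach: the mutual-exclusivity direction is straightforward, the transfinite extraction lemma captures the structure of the repeated-averages hierarchy, and invoking convex unconditionality to upgrade to $\mathcal{S}_\xi$-unconditionality is the standard move. One small caution on the mutual-exclusivity paragraph: when you ``transport the lower estimate from $M$ to $L$'' you are implicitly switching between the intrinsic enumeration of $(x_i)_{i\in M}$ (for which the $\ell_1^\xi$ estimate is stated) and the ambient $\mathbb{N}$-indexing used to define $\xi_n^L$; the spreading property of $\mathcal{S}_\xi$ runs in one direction only, so a line of care is needed there, but this is routine. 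As you yourself flag, the genuinely hard content is the Ramsey-type stabilisation inside the induction, and your sketch appropriately defers that to the cited sources rather than pretending it is elementary.
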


For each countable ordinal $\xi$, the Schreier space $X_\xi$ is the completion of $c_{00}$ with respect to the norm
$$\|x\|_{X_{\xi}}= \sup_{F \in \mathcal{S}_\xi} \sum_{i \in F}|x(i)|.$$
Here $x=(x(1),x(2), \ldots ) \in c_{00}$. 

For $\xi<\omega_1$, let $T_\xi$ denote the Tsirelson space of order $\xi$. Using the notation from \cite{ATo-book,ATol-Memoirs}, $T_\xi = T[\frac{1}{2},\mathcal{S}_\xi]$. Here $\xi=1$ is the usual Figiel-Johnson Tsirelson space \cite{FiJo-Compositio}. Rather than recall the definition, we note that $T_\xi$ is a reflexive space such that the canonical $c_{00}$ basis is an unconditional basis for $T_\xi$ and such that every seminormalized block sequence with respect to the canonical basis is an $\ell_1^\xi$ spreading model.

In the following subsections we define several operator ideals and their subclasses. Note that all of the subclasses defined below satisfy the ideal property and some have been shown to be operator ideals themselves. In the interest of providing the necessary background, we have organized what is known about each of these classes as follows.
\begin{itemize}
    \item[(U)] We state what is known regarding universal operators for the complement of the class.
    \item[(F)] We state whether the class has the space factorization property or if it is an open question.
    \item[(G)] We state whether the class is (Bourgain) generic or if it is a open question.
\end{itemize}

In a slight departure from previous papers by the current authors on this subject, we will adopt the notation for operators ideals that is used in the excellent survey by J. Diestel, H. Jarchow and A. Pietsch \cite{DJP-Handbook}.

\subsection{Approximable and Compact operators} Let  $\overline{\mathfrak{F}}$ and $\mathfrak{K}$ denote the closed operator ideals of compact and closure of finite rank operators, respectively. 

\begin{itemize}
    \item[(U)] It is an open problem as to whether there is a universal operator for $\complement \overline{\mathfrak{F}}$.  W.B. Johnson showed \cite{Jo-Colloquium} that the formal identity from $\ell_1$ to $\ell_\infty$ is $\complement \mathfrak{K}$-universal.  
    \item[(F)] It is easy to see that neither $\overline{\mathfrak{F}}$ nor $\mathfrak{K}$ has the space factorization property. 
    \item[(G)] The operator ideals  $\overline{\mathfrak{F}}$ and $\mathfrak{K}$ are not generic. T. Figiel \cite{Fi-Studia} constructed a reflexive space $Z$ such that every element of $\overline{\mathfrak{F}}$ factors through $Z$ and every element of $\mathfrak{K}$ factors through a subspace of $Z$.   
\end{itemize}

\subsection{Completely Continuous Operators and the Dunford-Pettis Operator Ideal}

An operator $A:X \to Y$ is \emph{completely continuous} provided that for every weakly null sequence $(x_n)_{n=1}^\infty \subset B_X$, $(Ax_n)_{n=1}^\infty$ is norm null. These operators form a closed ideal which we will denote $\mathfrak{V}$. An operator $A:X\to Y$ is in the ideal of Dunford Pettis $\mathfrak{DP}$
if $\lim_n y_n^*(Ax_n)=0$ for every weakly null sequence $(y_n^*)$ in $Y^*$ and weakly null sequence $(x_n)$ in $X$. We note that in some literature completely continuous operators are called Dunford Pettis operators. In this paper we adopt the terminology of \cite{DJP-Handbook} and consider the ideal $\mathfrak{DP}$ to be the operators so that $\mbox{Space}(\mathfrak{DP})$ coincides the spaces having the Dunford-Pettis property.

\begin{itemize}
\item[(U)] M. Girardi and W.B. Johnson \cite{GirJo-Israel} showed that there is no $\complement \mathfrak{V}$-universal operator. In Theorem \ref{come at me bro} we give an alternative proof and strengthening of this theorem that uses a complexity argument from descriptive set theory. Also in Theorem \ref{come at me bro}, we show that there is no  $\complement \mathfrak{DP}$-universal operator.
\item[(F)] The ideal $\mathfrak{V}$ does not have the space factorization property \cite[page 60]{Pie-book}. According to \cite{DJP-Handbook} it is an open question as to whether $\mathfrak{DP}$ has the space factorization property.
\item[(G)] In Theorem \ref{lots of stuff}, we show that $\mathfrak{V}$ and $\mathfrak{DP}$ are generic.
\end{itemize}

\subsection{Weakly Compact Operators and Subclasses} An operator $A:X\to Y$ is \emph{weakly compact} if $A(B_X)$ ($B_X$ is the unit ball of $X$) is relatively weakly  compact. These operators form a closed ideal that we denote $\mathfrak{W}$.  
\begin{itemize}
    \item[(U)] J. Lindenstrauss and A. Pe\l cyz\'{n}ski \cite{LindPel-Studia} showed that the operator $s:\ell_1\to \ell_\infty$ given by $s((a_i)_{i=1}^\infty)=(\sum_{i}^n a_i)_{n=1}^\infty$ is  $\complement \mathfrak{W}$-universal.
    \item[(F)] The famous theorem of Davis, Figiel, Johnson and Pe\l czy\'{n}ski \cite{DFJP-JFA} states that $\mathfrak{W}$ has the space factorization property.
    \item[(G)]  In Theorem \ref{lots of stuff} we prove that $\mathfrak{W}$ is generic.
\end{itemize}
For $\mathfrak{W}^{super}$ we have the following:
\begin{itemize}
    \item[(U)] In Theorem \ref{thats just super} we observe that $\complement \mathfrak{W}^{super}$ has a universal operator.
    \item[(F)] It is well-known and easy to see that $\mathfrak{W}^{super}$ does not have the space factorization property. An example of this is given in  \cite[Proposition 1.2]{BC-Scand}.
    \item[(G)]  In Corollary \ref{not generic}  we show that $\mathfrak{W}^{super}$ not generic.
\end{itemize}

Fix a countable ordinal $\xi$. An operator $A:X \to Y$ is called $\mathcal{S}_\xi$-\emph{weakly compact} if either $\dim X<\infty$ or  for each normalized basic sequence $(x_n)$ in $X$ and $\varepsilon >0$ there are  $(n_i)_{i=1}^\ell \in \mathcal{S}_\xi$ and scalars $(a_i)_{i=1}^\ell$ so that 
$$\|\sum_{i=1}^\ell a_i A x_{n_i}\| <\varepsilon \max_{1\leqslant \ell \leqslant k}|\sum_{i=\ell}^k a_i|.$$ 
Denote this subclass of $\mathfrak{W}$ by $\mathcal{S}_\xi$-$\mathfrak{W}$. It is easily seen that $\mathcal{S}_0$-$\mathfrak{W}$ is simply the class of compact operators.  In \cite{BFr-Extracta}, it is shown that $(\mathcal{S}_\xi$-$\mathfrak{W})_{\xi<\omega_1}$ separably refines $\mathfrak{W}$. Moreover for each $\xi<\omega_1$, the class $\mathcal{S}_\xi$-$\mathfrak{W}$ forms a closed ideal. In \cite{BC-Scand}, the authors show that these ideals coincide with two natural, but ostensibly different, subclasses. One of these classes is the class of $\xi$-Banach-Saks operators, defined in a subsequent subsection. 

\begin{itemize}
    \item[(U)] In Theorem \ref{soo bad} we prove that for each $0<\xi<\omega_1$, there is no $\complement \mathcal{S}_\xi$-$\mathfrak{W}$-universal operator. 
    \item[(F)] For each $0<\xi<\omega_1$,  $\mathcal{S}_\xi$-$\mathfrak{W}$ has the space factorization property \cite{BC-Scand}.
    \item[(G)]  In Theorem \ref{lots of stuff},  we show that  $\mathcal{S}_\xi$-$\mathfrak{W}$ is generic for each $0<\xi<\omega_1$. 
\end{itemize}

\subsection{Strictly Singular Operators and subclasses} An operator $A:X\to Y$ is \emph{strictly singular} provided that for any infinite dimensional subspace $Z$ of $X$, $A|_Z$ is not an isomorphic embedding. These operators form a closed ideal that will be denoted by  $\mathfrak{S}$. This operator ideal is proper. 

\begin{itemize}
    \item[(U)]  T. Oikhberg showed in \cite{Oik-Positivity} that there is no $\complement \mathfrak{S}$-universal operator.
     \item[(F)] The class $\mathfrak{S}$ does not have the space factorization property.
    \item[(G)] In Theorem \ref{lots of stuff} we show that $\mathfrak{S}$ is generic.
\end{itemize}
An operator $A:X\to Y$ is \emph{finitely strictly singular} if for any $\varepsilon > 0$, there exists $n \in \mathbb{N}$ so that for any
$E \subset X$ with $\mbox{dim }E = n$, there exists $x \in E$ with $\|Ax\| < \varepsilon\|x\|$.  
\begin{itemize}
    \item[(U)] Oikhberg showed in \cite{Oik-Positivity} there is a $\complement \mathfrak{FS}$-universal operator (see Proposition \ref{thats just super}).
    \item[(F)] The class $\mathfrak{FS}$ does not have the space factorization property.
    \item[(G)] Whether $\mathfrak{FS}$ is a generic class remains an open question. We conjecture that it is not.
\end{itemize}

Fix  $\xi<\omega_1$. An operator $A:X \to Y$ is called $\mathcal{S}_\xi$-strictly singular if $\dim X<\infty$ or for each normalized basic sequence $(x_n)$ in $X$ and $\varepsilon >0$,  there are $(n_i)_{i=1}^\ell \in \mathcal{S}_\xi$ and scalars $(a_i)_{i=1}^\ell$ such that 
$$\|\sum_{i=1}^\ell a_i A x_{n_i}\| <\varepsilon \|\sum_{i=1}^\ell a_i x_{n_i}\|.$$ 
Denote the subclass of $\mathcal{S}_\xi$-strictly singular operators by $\mathcal{S}_\xi$-$\mathfrak{S}$.  It is clear that $\mathcal{S}_0$-$\mathfrak{S}$ coincides with the class of compact operators. These classes where defined in \cite{AnDoSiTr-Israel} and have been extensively studied \cite{AnB-QM,B-Israel,BDo-Mathematika,Popov-Houston}. In particular, while it is shown that $(\mathcal{S}_\xi$-$\mathfrak{S})_{\xi< \omega_1}$ separably refines $\mathfrak{S}$. In general, $\mathcal{S}_\xi $-$\mathfrak{S}$ need not satisfy the additive property of being an operator ideal \cite{OTe}. In the following, let $0<\xi<\omega_1$.

\begin{itemize}
    \item[(U)] In Theorem \ref{soo bad} we prove that  $\complement\mathcal{S}_\xi$-$ \mathfrak{S}$ does not have a universal operator.
    \item[(F)] The class $\mathcal{S}_\xi $-$\mathfrak{S}$ does not have the space factorization property.
    \item[(G)] In Theorem \ref{lots of stuff}, we show that $\mathcal{S}_\xi$-$\mathfrak{S}$ is generic.
\end{itemize}

Fix an operator $A:X\to Y$ and $K \geqslant 1$. Let 
$$T_\mathfrak{S}(A,X,Y,K)=\{(x_i)_{i=1}^n \in S_X^{<\mathbb{N}} : \mbox{$(x_i)_{i=1}^n$ is $K$-basic and $(Ax_i)_{i=1}^n$ $K$-dominates $(x_i)_{i=1}^n$}\}.$$
Let $r_\mathfrak{S}(X,Y,A)= \sup_{K\geqslant 1}o(T_\mathfrak{S}(A,X,Y,K))$ and for each ordinal $\xi$, let 
$$\mathfrak{S}^\xi (X,Y)=\{A \in \mathfrak{L}(X,Y): r_\mathfrak{S}(X,Y,A) \leqslant \omega^\xi\}.$$

These operators were defined in \cite{BCFrWa-JFA}, where it is proved that $(\mathfrak{S}^\xi)_{\xi \in \textbf{Ord}}$ refines $\mathfrak{S}$. Moreover, $\mathfrak{S}^1=\mathfrak{FS}$ and $\mathcal{S}_1$-$ \mathfrak{S} \not\subset \mathfrak{S}^\xi$ for all $0<\xi \in \textbf{Ord}$. 

\begin{itemize}
    \item[(U)] In Theorem \ref{soo bad}, we will show that the aforementioned Oikhberg result for $\mathfrak{FS}$ cannot be extended to the classes $\mathfrak{S}^\xi$. In particular, for each $1<\xi$, there is no $\complement \mathfrak{S}^\xi$-universal operator.
    \item[(F)] The class $\mathfrak{S}^\xi$ does not have the space factorization property.
    \item[(G)] We do not know if $\mathfrak{S}^\xi$ is generic for any $0<\xi<\omega_1$. 
\end{itemize}

\subsection{$E$-singular operators} Let $E$ be a Banach space. An operator $A:X \to Y$ is called \emph{$E$-singular} provide there is no subspace $X'$ of $X$ isomorphic to $E$ so that $A|_{X'}$ is an isomorphic embedding. Denote this collection of operators $\mathfrak{S}E$. It is observed in \cite[Section 8]{BCFrWa-JFA} that $\mathfrak{S}E$ is a closed operator ideal if and only if $\mathfrak{S}E$ is closed under addition. In particular,  $\mathfrak{S}E$ always satisfies the ideal property.

In the current paper, we mostly consider the particular cases $\ell_1$-singular,  $\mathfrak{Sl}_1$,  and $c_0$-singular $\mathfrak{Sc}_0$. The members of the class $\mathfrak{Sl}_1$ are called the \emph{Rosenthal operators}.

\begin{itemize}
    \item[(U)] It was observed in \cite{DJP-Handbook} that for any non-empty set $\Gamma$,  any isomorphic embedding from $E \to \ell_\infty(\Gamma)$ is $\complement \mathfrak{S}E$-universal. 
    \item[(F)] The ideal $\mathfrak{Sl}_1$ has the space factorization property \cite{Hein-JFA}. Ghoussoub and Johnson \cite{GhJo-PAMS} showed that $\mathfrak{Sc}_0$ does not have the space factorization property. It is an open question as to whether $\mathfrak{Sl}_p$ has the space factorization property for any $1<p<\infty$. 
    \item[(G)] In Theorem \ref{lots of stuff}, we show that $\mathfrak{S}$ is generic, and therefore $\mathfrak{S}E$ is generic for any infinite dimensional Banach space $E$. 
    
\end{itemize}

Fix a countable ordinal $0<\xi<\omega_1$ and suppose that the basis of $(e_i)$ of $E$ is isometric to all of it's subsequences (i.e. $1$-spreading).  An operator $A:X \to Y$ is called $\mathcal{S}_\xi$-$E$ \emph{singular} if for each normalized basic sequence $(x_n)$ in $X$ and $\varepsilon >0$,  there are $(n_i)_{i=1}^\ell \in \mathcal{S}_\xi$ and scalars $(a_i)_{i=1}^\ell$ such that either 
$$\|\sum_{i=1}^\ell a_i A x_{n_i}\| <\varepsilon \|\sum_{i=1}^\ell a_i e_{i}\|$$ or $$\|\sum_{i=1}^\ell a_i e_i\| < \ee \|\sum_{i=1}^\ell a_i x_{n_i}\|.$$
Denote the subclass of $\mathcal{S}_\xi$-$E$-singular operators by $\mathcal{S}_\xi$-$\mathfrak{S}E$. Note that $\mathcal{S}_\xi$-$\mathfrak{S}E$-singular depends on the basis chosen for $E$ and can change for different bases of the same space (e.g. consider the summing basis and standard basis of $c_0$). These classes were defined in \cite{BFr-Extracta}. There it was shown that $(\mathcal{S}_\xi$-$\mathfrak{S}E)_{\xi < \omega_1}$ separably refines $\mathfrak{S}E$.   

\begin{itemize}
    \item[(U)] In Theorem \ref{man thats good}, we will show that for each countable ordinal $\xi$, there are $\complement \mathcal{S}_\xi$-$\mathfrak{Sc}_0$ and $\complement \mathcal{S}_\xi$-$ \mathfrak{Sl}_1$ universal operators. Here we are considering the canonical unit vector basis for both $\ell_1$ and $c_0$. Whether the complements of the classes $\mathcal{S}_\xi$-$ \mathfrak{Sl}_p$ have universal operators for any $1 < p < \infty$ is an open problem.
    \item[(F)] In \cite{BC-Scand}, it is shown that $\mathcal{S}_\xi$-$ \mathfrak{Sl}_1$ has the space factorization property. The other cases remain open.
    \item[(G)] In Theorem \ref{lots of stuff}, we show that $\mathfrak{S}_1$-$\mathfrak{S}$ is generic, and therefore  $\mathcal{S}_\xi$-$ \mathfrak{S}E$ is generic for each $0<\xi<\omega_1$ and $E$ with a $1$-spreading basis. 
\end{itemize}

Now assume $E$ has a normalized Schauder basis $(e_i)_{i=1}^\infty$. Fix an operator $A:X\to Y$ and $K \geqslant 1$. Let

\begin{equation}
    \begin{split}
  T_{\mathfrak{S}E}(A,X,Y,K)=\{(x_i)_{i=1}^n \in S_X^{<\mathbb{N}} : & (Ax_i)_{i=1}^n \text{ is $K$ basic, $K$-dominates } (e_i)_{i=1}^n,\\
  & (e_i)_{i=1}^n \mbox{ $1$-dominates } (x_i)_{i=1}^n \}.      
    \end{split}
\end{equation}

Let $r_E(X,Y,A)= \sup_{K\geqslant 1}o(T_{\mathfrak{S}E}(A,X,Y,K))$ and, for each ordinal $\xi$, let 
$$\mathfrak{S}E^\xi(X,Y)=\{A \in \mathfrak{L}(X,Y): r_{\mathfrak{S}E}(X,Y,A) \leqslant \omega^\xi\}.$$ In \cite{BCFrWa-JFA}, it is proved that $(\mathfrak{S}E^\xi)_{\xi \in \textbf{Ord}}$ refines $\mathfrak{S}E$.

\begin{itemize}
    \item[(U)] In Theorem \ref{man thats good}, we show that  $\complement\mathfrak{Sc}_0^\xi$ has a universal operator for each $\omega\leqslant \xi \in \textbf{Ord}$. It was shown by Oihkberg in \cite{Oik-Positivity} that $\complement \mathfrak{Sl}_p^1$ and $\complement \mathfrak{Sc}_0^1$ have universal operators, and in fact the given universal operator for $\mathfrak{FS}$ coincides with the given universal operator for $\mathfrak{Sl}_2^1$.  For all $1<\xi$ and $1\leqslant p<\infty$ (resp. for all $1<\xi<\omega$), it is an open question   as to whether  $\complement\mathfrak{Sl}_p^\xi$ (resp. $\complement \mathfrak{Sc}_0^\xi$) has a universal operator. Once again we let $(e_i)$ be the unit vector basis for $\ell_p$ and $c_0$ in the definition of $\complement\mathfrak{Sl}_p$ and $\complement\mathfrak{Sc}_0$.
    \item[(F)] In \cite{BC-Scand}, the authors show that $\mathfrak{Sl}_1^{\omega^\xi}$ has the $\mathfrak{Sl}_1^{\omega^\xi+1}$ space factorization property. For every ordinal $0<\xi$, whether $\mathfrak{Sl}_p^\xi$ has the $\mathfrak{Sl}_p$ factorization property is open. The positive result is due to the projectivity of $\ell_1$, while the result of Ghoussoub and Johnson shows that there exists an ordinal $\xi$ such that $\mathfrak{Sc}_0^\xi$ does not have the $\mathfrak{Sc}_0$ factorization property. In light of these facts, we conjecture that for $1<p<\infty$, there exists an ordinal $\xi$ such that $\mathfrak{Sl}_p^\xi$ does not have the $\mathfrak{Sl}_p$ factorization property. 
    \item[(G)] We do not know if $\mathfrak{S}E^\xi$ is generic for any $\xi<\omega_1$ or Banach space $E$. We conjecture that these classes are never generic.
\end{itemize}

\subsection{Asplund Operators} Let $\mathfrak{D}$ denote the ideal of \emph{Asplund} (or \emph{decomposing}) operators.   We recall that $A:X\to Y$ is said to be Asplund if whenever $Z$ is a separable subspace of $X$, $(A|_Z)^*Y^*$ is a separable subspace of $Z^*$.   

\begin{itemize}
    \item[(U)] Stegall \cite{Stegall-TAMS} showed that, if $(f_n)_{n=1}^\infty\subset L_\infty[0,1]$ is an enumeration of the collection of indicator functions of the dyadic intervals $\{[\frac{j-1}{2^n}, \frac{j}{2^n}): n\in \nn\cup \{0\}, 1\leqslant j\leqslant 2^n\}$, and if $(e_n)_{n=1}^\infty$ is the canonical $\ell_1$ basis, the map $e_n\mapsto f_n$ is universal for the complement of the class of Asplund operators. 
    \item[(F)] In \cite{DFJP-JFA} it is shown that $\mathfrak{D}$ has the space factorization property.
    \item[(G)] In Theorem \ref{lots of stuff} we observe that $\mathfrak{D}$ is generic.
\end{itemize}

For each $\xi\in \textbf{Ord}$, let $\mathfrak{D}_\xi$ denote the class of operators with Szlenk index not exceeding $\omega^\xi$ (see \cite{Br-JOT} for definition). Brooker showed that $(\mathfrak{D}_\xi)_{\xi \in \textbf{Ord}}$ refines $\mathfrak{D}$, $(\mathfrak{D}_\xi)_{\xi<\omega_1}$ separably refines $\mathfrak{D}$,   each $\mathfrak{D}_\xi$ is a closed operator ideal, and $\mathfrak{D}_0$ coincides with the class of compact operators.  

\begin{itemize}
    \item[(U)] Brooker \cite{Brooker-PreprintAbsolute} showed that for each $\xi<\omega_1$,  there is an operator in $\mathfrak{D}_{\xi+1}$ that is $\complement \mathfrak{D}_\xi$-universal.
    \item[(F)] Brooker \cite{Brooker-PreprintAbsolute} showed that for every $\xi\in \textbf{Ord}$, $ \mathfrak{D}_\xi$ has the $\mathfrak{D}_{\xi+1}$ space factorization property.
    \item[(G)] In Theorem \ref{strongly bounded}, we note that $\mathfrak{D}_\xi$ is not generic for any $\xi<\omega_1$.
\end{itemize}

\subsection{Banach-Saks and weak Banach-Saks operators}  An operator $A:X \to Y$ is \emph{Banach-Saks} if for every bounded sequence $(x_n)\subset X$, $(Ax_n)$ has a Cesaro summable subsequence. Let $\mathfrak{BS}$ denote the ideal of Banach-Saks operators. 

\begin{itemize}
    \item[(U)] In Theorem \ref{soo bad}, we show there is no $\complement\mathfrak{BS}$-universal operator.
    \item[(F)] The class $\mathfrak{BS}$ has the space factorization property \cite{Beauzamy-Seminar}.
    \item[(G)]  In Theorem \ref{lots of stuff}, we prove that $\mathfrak{BS}$ is generic.
\end{itemize}

We now recall the definition of $\xi$-Banach-Saks operators for $0<\xi< \omega_1$ \cite{BC-Scand}. We note that these are not subclasses of $\mathfrak{BS}$,  but rather increasing super-classes of $\mathfrak{BS}$ such that $\mathfrak{BS}=\mathfrak{BS}^1$ and $$\mathfrak{W}\cap \mathfrak{X} \subsetneq \bigcup_{0<\xi<\omega_1} \mathfrak{BS}^\xi \subsetneq \mathfrak{W}.$$ Here, $\mathfrak{X}$ denotes the closed ideal of separable range operators. 

Fix an operator $A:X\to Y$ and suppose that $(x_n)$ is a bounded sequence in $X$.  Define $BS((x_n),A)$ to be the smallest countable ordinal $\xi$ (if any such exists) such that $(Ax_n)$ is $\xi$-convergent. Combining Theorem \ref{the dichotomy} with standard characterizations of weakly null sequences, such an ordinal exists  if and only if $(Ax_n)$ has a weakly convergent subsequence. If no such ordinal $\xi$ exists,   then we set $BS((x_n), A)=\omega_1$.  

 For $0<\xi<\omega_1$, we  say $A:X\to Y$ is $\xi$-\emph{Banach-Saks} provided that for every bounded sequence $(x_n)$ in $X$, $BS((x_n), A)\leqslant \xi$. Let  $\mathfrak{BS}^\xi$ denote the class of $\xi$-Banach-Saks operators. Note that $\mathfrak{BS}^1=\mathfrak{BS}$.

For each $0<\xi<\omega_1$, we know from \cite{BCFrWa-JFA} that $\mathcal{S}_\xi$-$\mathfrak{Sl}_1\cap \mathfrak{W}=\mathcal{S}_\xi$-$\mathfrak{W}=\mathfrak{BS}^\xi$.
This equality means we can appeal to previous results to to see the following

\begin{itemize}
    \item[(U)] In Theorem \ref{soo bad} we show there is no $\complement \mathfrak{BS}^\xi$-universal operator.
    \item[(F)] In \cite{BC-Scand} the authors show that $\mathfrak{BS}^\xi$ has the space factorization property.
    \item[(G)]  In Theorem \ref{lots of stuff} we show that for each $0<\xi<\omega_1$, $\mathfrak{BS}^\xi$ is generic.
\end{itemize}

 An operator $A:X \to Y$ is weak-Banach-Saks if for every weakly null sequence $(x_n)$ in $X$, $(Ax_n)$ has a Cesaro summable subsequence. Let $\mathfrak{wBS}$ denote the ideal of weak-Banach-Saks operators.
 For each $0<\xi<\omega_1$,  the ideal of $\mathfrak{wBS}^\xi$ is defined  analogously to the class $\mathfrak{BS}^\xi$ by replacing the condition ``every bounded sequence" with ``every weakly null sequence.'' 
 
 \begin{itemize}
    \item[(U)] In Theorem \ref{soo bad}, we show that for each $0<\xi<\omega_1$,  $\complement \mathfrak{wBS}^\xi$ does not have a  universal operator.
    \item[(F)]  It is an open question as to whether any of these classes have the factorization property. 
    \item[(G)]  In Theorem \ref{lots of stuff},  for each $0<\xi<\omega_1$, we show that  $\mathfrak{wBS}^\xi$ is generic.
\end{itemize}

\section{Universal Operators: Positive results}

We begin this section by observing that the complements of many of the super ideals mentioned above have universal operators. The argument are a straightforward application of the following well-known ``gluing'' technique (see \cite{Baudier-Archiv} and \cite{Oik-Positivity} for examples of this technique).

\begin{proposition} Suppose that $\{U_n:E_n\to \ell_\infty: n\in \nn\}$ is a sequence of  operators such that $\dim E_n<\infty$ for each $n\in \nn$. If $B:X\to Y$ is such that there exists $c>0$ such that for each $n \in \mathbb{N}$, $U_n$ $c$-factors through $B|_Z$ for every finite codimensional subspace $Z$ of $X$, then the operator $$\oplus_{n=1}^\infty U_n:(\oplus_{n=1}^\infty E_n)_{\ell_1}\to \ell_\infty$$ factors through $B$. 
\label{factfact}
\end{proposition}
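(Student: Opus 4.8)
The plan is to construct the two factoring operators explicitly from the hypothesis, using a block-diagonal construction to handle all the $U_n$ simultaneously while exploiting the finite-dimensionality of the $E_n$ and the freedom to pass to finite-codimensional subspaces of $X$. Write $E = (\oplus_{n=1}^\infty E_n)_{\ell_1}$ and $U = \oplus_{n=1}^\infty U_n : E \to \ell_\infty$; note $U$ is bounded since each $\|U_n\| \le c\|B\|$ (indeed $U_n$ $c$-factors through $B$ itself, taking $Z = X$), and an $\ell_1$-sum of uniformly bounded operators into $\ell_\infty$ is bounded with $\|U\| = \sup_n \|U_n\|$.

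First I would build the factorization inductively, producing a nested sequence $X = Z_0 \supseteq Z_1 \supseteq Z_2 \supseteq \cdots$ of finite-codimensional subspaces of $X$ together with, for each $n$, operators $C_n : E_n \to Z_{n-1}$ and $D_n : Y \to \ell_\infty$ with $U_n = D_n B|_{Z_{n-1}} C_n$, $\|C_n\|\|D_n\| \le c$. The key point enabling the block-diagonal gluing is to arrange that the ranges $C_n(E_n)$ become "almost disjoint" in $X$: since $\dim E_n < \infty$, $C_n(E_n)$ is finite-dimensional, so I can choose $Z_n \subseteq Z_{n-1}$ of finite codimension in $X$ that is (say) a $1$-complement-approachable or simply a small-angle-avoiding subspace of the finite-dimensional pieces already chosen — concretely, pick norm-one functionals witnessing the finite-dimensionality of $C_1(E_1) + \cdots + C_n(E_n)$ and let $Z_n$ be the intersection of their kernels. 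Then the hypothesis (applied with the finite-codimensional subspace $Z_n$ of $X$) gives $C_{n+1}, D_{n+1}$ for the next step. This produces a sequence $(C_n)$ of operators into $X$ with essentially disjointly supported ranges (modulo a basis-projection argument), and a sequence $(D_n)$ of operators $Y \to \ell_\infty$.

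Next I would assemble $C : E \to X$ by $C((\xi_n)_n) = \sum_n C_n \xi_n$: this converges and is bounded by $\sup_n \|C_n\| \le c$ because the domain is an $\ell_1$-sum and $\|C_n\| \le c$ uniformly, so no disjointness is even needed on the $C$ side — the $\ell_1$-sum does all the work. The subtle side is $D$: I want $D : Y \to \ell_\infty$ with $DB C = U$, i.e. $DBC_n\xi_n$ should, in the $n$-th block of coordinates of $\ell_\infty$, agree with $U_n\xi_n = D_n B C_n \xi_n$. The natural definition is to let the $n$-th coordinate-block of $Dy$ be the $n$-th coordinate-block of $D_n y$; this is a well-defined operator into $\ell_\infty$ provided $\sup_n \|D_n\| < \infty$, which we have ($\|D_n\| \le c/\|C_n\|$ — here one normalizes so that $\|C_n\|$ is bounded below, which is harmless unless $U_n = 0$, a trivial case). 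One then checks on each summand $\xi_n \in E_n$ that $DBC\xi_n$ has $n$-th block equal to $U_n\xi_n$; but $DBC\xi_n$ may have garbage in the other blocks, coming from $D_m B C_n \xi_n$ for $m \ne n$. This is exactly where the almost-disjointness of the ranges $C_n(E_n)$ enters: I would instead refine the construction so that $B C_n(E_n)$ is (approximately) annihilated by the functionals defining $D_m$ for $m \ne n$ — achievable because at stage $n$ we are free to thin $X$ by a finite-codimensional subspace, and $BC_m(E_m)$ for $m < n$ is finite-dimensional, so we can pre-arrange $D_m$ (equivalently its defining functionals on $Y$) to vanish on $BC_n(E_n)$ for the later $n$; the $m > n$ directions are handled symmetrically by choosing $Z_m$ inside the kernels of the functionals defining $D_n$ pulled back through $B$.

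The main obstacle is precisely this cross-term control: making a single $D$ that is simultaneously correct on every block without interference, which forces the careful interleaving of the choices of $Z_n$ (thinning $X$) and $D_n$ (which live on $Y$, not $X$, so cannot be thinned after the fact). The cleanest way to package it is probably to absorb a perturbation: allow $U_n = D_n B|_{Z_{n-1}} C_n + R_n$ with $\|R_n\|$ summably small from a suitable further thinning, obtain the factorization up to a compact (indeed nuclear) perturbation of $U$, and then observe that this small perturbation can be corrected since $\dim E_n < \infty$ makes the relevant correction operator finite rank on each block and absorbable into $C$ or $D$. Everything else — boundedness of $C$, $D$, $U$, convergence of the sums — is routine given the uniform bounds $\|C_n\|\|D_n\| \le c$.
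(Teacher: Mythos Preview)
Your construction of $C$ is fine, and you correctly identify that all the difficulty lives on the $D$ side. The gap is in how you propose to build $D$. You want $D$ to be block-diagonal, with the $n$-th block given by the right factor $D_n:Y\to\ell_\infty$ handed to you by the hypothesis, and then you try to kill the cross-terms $D_m B C_n$ for $m\neq n$ by thinning $X$. But each $D_m$ is an operator into $\ell_\infty$, hence is given by an \emph{infinite} sequence of functionals in $Y^*$; forcing $BC_n(E_n)$ into the kernel of all of them would require $Z_n$ to have infinite codimension in $X$, which destroys the hypothesis. Your fallback to ``approximate annihilation plus a perturbation'' does not obviously rescue this: there is no reason the tails of those functional sequences are small on $BC_n(E_n)$, so the error terms need not be summable or even small.

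The paper avoids this trap by not trying to define the right factor on all of $Y$ at once. At stage $n+1$ it only records a \emph{finite} norming set $S_{n+1}\subset B_{Y^*}$ for the finite-dimensional span of the previously built images $G_i=BC_i(E_i)$, and takes $Z_{n+1}=\bigcap_{y^*\in S_{n+1}}\ker(B^*y^*)$, which is genuinely finite-codimensional. This makes $(G_n)_n$ a Schauder FDD for its closed span $Y_1\subset Y$ with uniformly bounded projections, so the blockwise map $\tilde A|_{G_n}=A_n$ is a bounded operator $Y_1\to(\oplus_n\ell_\infty)_{\ell_\infty}=\ell_\infty$. The step you are missing is the last one: extend $\tilde A$ from $Y_1$ to all of $Y$ using the injectivity of $\ell_\infty$. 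That extension is what lets you dispense with controlling $D_m$ off the span of the $G_n$'s, and it is why the codomain $\ell_\infty$ is not incidental to the statement.
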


\begin{proof} Let $W_n=\ell_\infty$ for each $n\in \nn$. We recursively define spaces $F_n, Z_n\leqslant X$, $G_n\leqslant Y$ and operators $$C_n:E_n\to C(E_n)=F_n\leqslant Z_n,$$ $$A_n:G_n=B(F_n)\to W_n$$ such that $\|C_n\|, \|A_n\|\leqslant c$ for all $n\in \nn$, and such that $(G_n)_{n=1}^\infty$ is a Schauder FDD for its closed span in $Y$. First fix a sequence $(\ee_n)_{n=1}^\infty$ of numbers such that $\prod_{n=1}^\infty (1+\ee_n)<2$.  We next let $Z_1=X$ and choose any factorization $C_1:E_1\to X$, $A_1:Y\to W_1$ with $\|C_1\|, \|A_1\|\leqslant c$. Let $F_1=C_1(E_1)$ and  $G_1=BC_1(E_1)$. Now assuming $G_1, \ldots, G_n$ have been defined, let $G=\text{span}\cup_{i=1}^n G_i$. Fix  a finite subset $S_{n+1}$ of $B_{Y^*}$ such that for each $y\in G$, $$(1+\ee_n)\max_{y^*\in S_{n+1}} |y^*(y)| \geqslant \|y\|.$$  Let $Z_{n+1}= \cap_{y^*\in S_{n+1}} \ker(B^*y^*)$. Now fix a factorization $C_{n+1}:E_{n+1}\to Z_{n+1}$, $A_{n+1}:Y\to W_{n+1}$ with $\|C_{n+1}\|, \|A_{n+1}\|\leqslant c$. Let $F_{n+1}=C_{n+1}(E_{n+1})$ and $G_{n+1}=B(F_{n+1})$. From the construction, it follows that if $y\in G$ and $y'\in G_{n+1}$, then $y'=Bx'$ for some $x'\in Z_{n+1}$, whence for any $y^*\in S_{n+1}$, $$y^*(y')= y^*(Bx')= B^*y^*(x')=0.$$  Therefore $$\|y+y'\| \geqslant \max_{y^*\in S_{n+1}} |y^*(y+y')|= \max_{y^*\in S_{n+1}} |y^*(y)| \geqslant (1+\ee_n)^{-1}\|y\|.$$  This completes the recursive construction.

Applying the last inequality from the previous paragraph iteratively, for any $1\leqslant m<n$ and any $y_i\in G_i$, $1\leqslant i\leqslant n+1$, $$\|\sum_{i=1}^{n+1} y_i\|\leqslant (1+\ee_n)\|\sum_{i=1}^n y_i\|\leqslant \ldots \leqslant \Bigl[\prod_{i=m}^n (1+\ee_i)\Bigr]\|\sum_{i=1}^m y_i\|\leqslant 2\|\sum_{i=1}^m y_i\|.$$  From this it follows that $(G_i)_{i=1}^\infty$ is a Schauder FDD for its closed span, call it $Y_1$. From this it follows that the operator $\tilde{A}:Y_1\to (\oplus_{n=1}^\infty W_n)_{\ell_\infty}$ given by $\tilde{A}|_{G_n}= A_n$ is well-defined and bounded. Let $A:Y\to \ell_\infty=(\oplus_{n=1}^\infty W_n)_{\ell_\infty}$  be any extension of $\tilde{A}$.  The operator $C:(\oplus_{n=1}^\infty E_n)_{\ell_1}\to X$ given by $C|_{E_n}=C_n$ is bounded, and $ABC=U$. 
\end{proof}

It is well-known that an operator is in $\complement(\mathfrak{W}^{super})$ if and only if the  finite summing operators $s_n:\ell_1^n\to \ell_\infty^n$, $n=1, 2, \ldots$, factor uniformly through it. Now fix $1\leqslant p\leqslant \infty$. It is easy to see that, if for each $n\in \nn$, $j_n:\ell_p^n\to \ell_\infty$ is an isometric embedding, then an operator is in $\complement (\mathfrak{Sl}_p^{super})$ (resp. $\complement (\mathfrak{Sc}_0^{super})$ if $p=\infty$) if and only if the operators $j_n:\ell_p^n\to \ell_\infty$ uniformly factor through it. Moreover, it is easy to see that an operator $B:X\to Y$  lies in $\complement (\mathfrak{W}^{super})$ (resp. $\complement (\mathfrak{Sl}_p^{super})$) if and only if there exists a constant $c$ such that the operators $s_n:\ell_1^n\to \ell_\infty^n$ (resp. $j_n:\ell_p^n\to \ell_\infty$) $c$ factors through $B|_Z$ for every $Z\leqslant X$ such that $\dim X/Z<\infty$ (see \cite{CDil-Studia} for the first statement and \cite{BCFrWa-JFA} for the second).  Combining these facts with Proposition \ref{factfact}, we obtain the following.

\begin{proposition}
The complements of the ideals $\mathfrak{W}^{super}$ and $\mathfrak{Sl}_p^{super}$ each have universal operators. In particular we have the following:
\begin{enumerate}
\item $B:X\to Y$ lies in $\complement\mathfrak{W}^{super}$ if and only if the operator $$\oplus_{n=1}^\infty s_n:(\oplus_{n=1}^\infty \ell_1^n)_{\ell_1}\to (\oplus_{n=1}^\infty \ell_\infty^n)_{\ell_\infty}$$
where  $s_n:\ell_1^n\to \ell_\infty^n$ are the finite summing operators given by $s_n(e_i)=\sum_{j=1}^i e_j$ factors through $B$. 
\item $B:X\to Y$ lies in $\complement \mathfrak{Sl}_p^{super}$ if and only if the operator  $$\oplus_{n=1}^\infty j_n: (\oplus_{n=1}^\infty \ell_p^n)_{\ell_1}\to (\oplus_{n=1}^\infty \ell_\infty)_{\ell_\infty}$$ factors throuh $B$. Here,  $j_n:\ell_p^n\to \ell_\infty$ is an isometric embedding. 
\end{enumerate}
\label{thats just super}
\end{proposition}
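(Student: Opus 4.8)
The plan is to derive everything from Proposition~\ref{factfact} together with the two quantitative descriptions of $\complement\mathfrak{W}^{super}$ and $\complement\mathfrak{Sl}_p^{super}$ recalled just above the statement: $B:X\to Y$ lies in $\complement\mathfrak{W}^{super}$ (resp. $\complement\mathfrak{Sl}_p^{super}$) if and only if there is a constant $c$ so that $s_n:\ell_1^n\to\ell_\infty^n$ (resp. $j_n:\ell_p^n\to\ell_\infty$) $c$-factors through $B$ for every $n$, and equivalently $c$-factors through $B|_Z$ for every finite-codimensional subspace $Z$ of $X$. I will carry out the $\mathfrak{W}^{super}$ case; the $\mathfrak{Sl}_p^{super}$ case is identical after replacing $\ell_1^n$ by $\ell_p^n$ and $s_n$ by $j_n$. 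Write $U=\oplus_{n=1}^\infty s_n$, noting that $(\oplus_{n=1}^\infty\ell_\infty^n)_{\ell_\infty}$ is isometrically $\ell_\infty$, being the space of bounded scalar families indexed by the countable set $\{(n,i):1\leqslant i\leqslant n\}$; so $U$ may be viewed as an operator into $\ell_\infty$.

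First I would prove the ``if'' half of the displayed equivalence. Suppose $U$ factors through $B$, say $U=DBC$. For each $n$ the canonical norm-one inclusion $\iota_n:\ell_1^n\hookrightarrow(\oplus_k\ell_1^k)_{\ell_1}$ and the canonical norm-one coordinate projection $\pi_n:(\oplus_k\ell_\infty^k)_{\ell_\infty}\to\ell_\infty^n$ satisfy $\pi_n U\iota_n=s_n$, whence $s_n=(\pi_n D)\,B\,(C\iota_n)$ with $\|\pi_n D\|\leqslant\|D\|$ and $\|C\iota_n\|\leqslant\|C\|$ for all $n$; thus the $s_n$ factor uniformly through $B$, and the cited characterization gives $B\in\complement\mathfrak{W}^{super}$. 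Applying this with $B=U$ and $C,D$ the identity maps also shows $U\in\complement\mathfrak{W}^{super}$, so $U$ is a legitimate candidate for a universal operator.

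For the ``only if'' half --- that every $B\in\complement\mathfrak{W}^{super}$ factors $U$ --- I would apply Proposition~\ref{factfact} with $E_n=\ell_1^n$ and $U_n=s_n$, viewed as an operator into $\ell_\infty$ via the coordinate embedding $\ell_\infty^n\hookrightarrow\ell_\infty$ (which alters no constants). By the second description recalled above there is $c>0$ with each $s_n$ $c$-factoring through $B|_Z$ for every finite-codimensional $Z\leqslant X$, which is exactly the hypothesis of Proposition~\ref{factfact}; hence $\oplus_{n=1}^\infty U_n:(\oplus_{n=1}^\infty\ell_1^n)_{\ell_1}\to\ell_\infty$ factors through $B$. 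Under the isometric identification of $(\oplus_{n=1}^\infty\ell_\infty^n)_{\ell_\infty}$ with $\ell_\infty$ this operator is precisely $U$, so $U$ factors through $B$. Combining the two halves, $U$ is $\complement\mathfrak{W}^{super}$-universal and the stated equivalence holds.

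The $\mathfrak{Sl}_p^{super}$ statement follows by the same argument with $\ell_p^n$ and $j_n$ in place of $\ell_1^n$ and $s_n$, and with $(\oplus_{n=1}^\infty\ell_\infty)_{\ell_\infty}$ --- the $\ell_\infty$-sum of countably many copies of $\ell_\infty$, again isometric to $\ell_\infty$ --- in place of $(\oplus_{n=1}^\infty\ell_\infty^n)_{\ell_\infty}$. I do not anticipate a serious obstacle: all of the analytic content is already contained in Proposition~\ref{factfact} and in the characterizations from \cite{CDil-Studia,BCFrWa-JFA}, and the only point requiring attention is the routine bookkeeping that identifies the $\ell_\infty$-sums appearing in the statement with $\ell_\infty$, so that the operator furnished by Proposition~\ref{factfact} is literally the one displayed.
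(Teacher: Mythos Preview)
Your proposal is correct and matches the paper's approach exactly: the paper does not write out a separate proof of this proposition but simply states that it follows by combining Proposition~\ref{factfact} with the characterizations of $\complement\mathfrak{W}^{super}$ and $\complement\mathfrak{Sl}_p^{super}$ cited immediately before it. Your write-up just makes explicit the routine bookkeeping (the ``if'' direction via coordinate inclusions/projections, and the isometric identification of the $\ell_\infty$-sums with $\ell_\infty$ needed to invoke Proposition~\ref{factfact}) that the paper leaves to the reader.
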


The next theorem contains the rest of our positive factoring results.

\begin{theorem} Fix $0<\xi<\omega_1$.  \begin{enumerate}[(i)]
\item The complement of $\mathcal{S}_\xi$-$\mathfrak{Sl}_1$ admits a universal factoring operator.  
\item The complement of $\mathcal{S}_\xi$-$\mathfrak{Sc}_0$ admits a universal factoring operator. 
\item If $\xi\geqslant \omega$, the complement of $\mathfrak{Sc}_0^\xi$ admits a universal factoring operator.  
\end{enumerate}
\label{man thats good}
\end{theorem}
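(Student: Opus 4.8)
The plan is to build the required universal operators by the same ``gluing'' mechanism of Proposition~\ref{factfact}, once we have identified, for each of the three classes, a countable family of finite-rank ``test operators'' $\{U_n : E_n \to \ell_\infty\}$ with the property that an operator $B:X\to Y$ lies in the relevant complement if and only if there is a uniform constant $c$ such that each $U_n$ $c$-factors through $B|_Z$ for every finite-codimensional $Z \leqslant X$. Granting such a characterization, Proposition~\ref{factfact} immediately yields that $\oplus_{n=1}^\infty U_n : (\oplus_n E_n)_{\ell_1} \to \ell_\infty$ factors through every member of the complement; and since $\oplus_n U_n$ itself is a legitimate operator that (by construction) still ``contains'' each $U_n$ as a restriction up to a uniform constant, a short check shows $\oplus_n U_n$ is itself in the complement, so it is the desired $\complement$-universal operator. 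Thus the real content is local (finite-dimensional) and combinatorial: producing the test operators and proving the ``uniform finite-codimensional factorization'' reformulation of non-membership.

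For parts (i) and (ii) — the classes $\mathcal{S}_\xi$-$\mathfrak{Sl}_1$ and $\mathcal{S}_\xi$-$\mathfrak{Sc}_0$ — I would proceed as follows. Unwinding the definition of $\mathcal{S}_\xi$-$E$-singularity with $E = \ell_1$ (resp.\ $c_0$), an operator $A$ fails to be $\mathcal{S}_\xi$-$\mathfrak{Sl}_1$ precisely when there is a normalized basic sequence $(x_n)$ in $X$ and a $\delta>0$ so that for \emph{every} $(n_i)_{i=1}^\ell \in \mathcal{S}_\xi$ and all scalars $(a_i)$, $\|\sum a_i A x_{n_i}\| \geqslant \delta \|\sum a_i e_i\|_{\ell_1} = \delta\sum|a_i|$ and simultaneously $\sum|a_i| = \|\sum a_i e_i\|_{\ell_1} \geqslant \delta'\|\sum a_i x_{n_i}\|$; i.e.\ along an $\mathcal{S}_\xi$-net of coordinates, $(Ax_{n_i})$ behaves like the $\ell_1^{\mathcal{S}_\xi}$ basis and $(x_{n_i})$ is dominated by $\ell_1$. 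The test operators should then be the canonical ``$\mathcal{S}_\xi$-averaged'' finite-dimensional summing/identity maps: for each $n$ (and each rational scale), take $E_n$ to be $\ell_1^{k}$ for an appropriate $k$ associated to a maximal $\mathcal{S}_\xi$-set, with $U_n$ the relevant formal map into $\ell_\infty$. One then shows membership of $B$ in $\complement \mathcal{S}_\xi$-$\mathfrak{Sl}_1$ forces a single basic sequence witnessing the failure, and — using the spreading/hereditary structure of $\mathcal{S}_\xi$ and the fact that passing to a finite-codimensional subspace only deletes finitely many coordinates (which the spreading property absorbs) — that each $U_n$ $c$-factors through every $B|_Z$ with $\dim X/Z<\infty$ for a fixed $c$ depending only on $\delta,\delta'$. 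The $c_0$ case is dual and entirely parallel, replacing the lower $\ell_1$ estimate by an upper $c_0$ estimate. The analogous statements for the ``$super$'' ideals in Proposition~\ref{thats just super} (and the reference to \cite{BCFrWa-JFA}) are the template here.

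For part (iii) — $\mathfrak{Sc}_0^\xi$ with $\xi \geqslant \omega$ — the new ingredient is the ordinal-index description: $A \in \complement \mathfrak{Sc}_0^\xi$ means $r_{\mathfrak{S}c_0}(X,Y,A) > \omega^\xi$, i.e.\ for arbitrarily large $K$ the tree $T_{\mathfrak{S}c_0}(A,X,Y,K)$ has order $>\omega^\xi$, so it contains arbitrarily tall well-founded (indeed, for the universal object, essentially ``$\omega^\xi$-full'') finite branches $(x_i)_{i=1}^n \in S_X^{<\nn}$ along which $(Ax_i)$ is $K$-basic and $K$-dominates the $c_0$-basis while the $c_0$-basis $1$-dominates $(x_i)$. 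The hypothesis $\xi\geqslant\omega$ is what lets us arrange a \emph{single} countable list of finite-dimensional test operators $U_n : \ell_\infty^{k_n} \hookrightarrow \ell_\infty$ (isometric embeddings of growing finite $c_0$-blocks, indexed along a fixed well-founded tree of order $\omega^\xi$ that can be built up ``$\omega$-additively'' from smaller pieces) such that $B \in \complement \mathfrak{Sc}_0^\xi$ iff these $U_n$ uniformly finite-codimensionally factor through $B$ — whereas for small finite $\xi$ such a countable family provably cannot capture the full index-$\omega^\xi$ failure, which is precisely why the theorem only claims $\xi\geqslant\omega$ (and matches the open problem stated in the excerpt for $1<\xi<\omega$). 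I would then again invoke Proposition~\ref{factfact} and the self-membership check to conclude.

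The main obstacle is part (iii): specifically, showing that when $\xi \geqslant \omega$ one can replace the ``tree of order $\omega^\xi$'' data by a \emph{fixed sequence} of finite-dimensional operators that still uniformly factors through every finite-codimensional restriction of an arbitrary $B \in \complement\mathfrak{Sc}_0^\xi$. This requires a stabilization/pigeonhole argument on the derived-tree structure (using $o(T^\xi)$ calculus from Section~2) to extract, from the a priori $B$-dependent witnessing tree, a canonical ``maximal'' well-founded $c_0$-tree of order exactly $\omega^\xi$ that embeds — with uniform constants — into every such restriction; the role of $\xi \geqslant \omega$ is to make $\omega^\xi = \sup_n \omega^{\xi_n}\cdot$(something) decompose compatibly with a single countable gluing, something that fails at finite levels. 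Parts (i) and (ii), by contrast, should be a relatively direct unwinding of definitions combined with the already-proven Proposition~\ref{factfact}.
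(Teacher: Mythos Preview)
Your plan to route everything through Proposition~\ref{factfact} is the wrong template, and the gap is not merely technical. That proposition is tailored to ``super''-type ideals (as in Proposition~\ref{thats just super}), where non-membership genuinely \emph{is} uniform factorization of a fixed sequence of finite-dimensional operators through every finite-codimensional restriction. The complements of $\mathcal{S}_\xi$-$\mathfrak{Sl}_1$, $\mathcal{S}_\xi$-$\mathfrak{Sc}_0$, $\mathfrak{Sc}_0^\xi$ are defined instead by the existence of a \emph{single infinite object} (an $\ell_1^\xi$ or $c_0^\xi$ spreading model, or an $\omega^\xi$-tall tree of $c_0$-like vectors), and this cannot be reassembled from unrelated finite blocks. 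Concretely, your ``short check'' that $\oplus_n U_n$ lies in the complement fails: for (ii) the domain $(\oplus E_n)_{\ell_1}$ is an $\ell_1$-sum and therefore admits no $c_0^\xi$ spreading model at all, so $\oplus_n U_n$ is trivially \emph{in} $\mathcal{S}_\xi$-$\mathfrak{Sc}_0$, not its complement. For (i), the images of different blocks land in disjoint $\ell_\infty$-summands, so on any $\mathcal{S}_\xi$-set straddling blocks you only get a max estimate, not the required $\ell_1$ lower bound. Even the forward implication you sketch (``spreading absorbs finite-codimensional perturbations'') is doing no work toward these obstructions.

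The paper's proofs do not invoke Proposition~\ref{factfact} at all. For (i) the universal operator is written down explicitly as $\ell_1 \xrightarrow{i} X_\xi \xrightarrow{j} \ell_\infty$ with $X_\xi$ the Schreier space; factorization through an arbitrary $B$ preserving an $\ell_1^\xi$ spreading model uses Rosenthal's dichotomy to reduce to a weakly null $\ell_1^\xi$ spreading model and then Theorem~\ref{the dichotomy} to extract $\mathcal{S}_\xi$-unconditionality of $(Bx_n)$, which is exactly what forces $(Bx_n)$ to dominate the $X_\xi$-norm. For (ii) the domain is a purpose-built space $Y_\xi$: the completion of $c_{00}$ under the Minkowski-type gauge $[x]=\inf\{\sum|a_i|: x=\sum a_i x_i,\ x_i\in K\}$ with $K=\{\sum_{i\in E}e_i: E\in\mathcal{S}_\xi\}$; this norm is engineered so that \emph{any} $c_0^\xi$ spreading model in $X$ automatically yields a bounded map $Y_\xi\to X$. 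For (iii) the construction is a genuinely tree-indexed analogue (not a sequential gluing): the domain $Z_\xi$ is built on $c_{00}(\mathcal{S}_\xi\setminus\{\varnothing\})$ with a segment-based gauge norm, and the factorization comes from recursively choosing $(x_E)_{E\in T_\xi}$ inside $X$. The precise role of $\xi\geqslant\omega$ is the ordinal identity $\omega^{1+\xi}=\omega^\xi$, which guarantees $\omega\cdot h(E)<\omega^\xi$ at every node and thereby leaves room in the derived tree $T_{\mathfrak{S}c_0}(B,X,Y,K)^{\omega h(E)}$ to absorb the finite dimension-count needed to push each new $x_E$ into the kernels of finitely many functionals --- your description of why $\xi\geqslant\omega$ matters does not identify this mechanism.
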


\begin{proof}$(i)$ Recall that $X_\xi$ denotes Schreier space of order $\xi$.  Let $i_\xi:\ell_1\to X_\xi$ be the formal identity and let $j:X_\xi\to \ell_\infty$ be any isometric embedding. Obviously $ji \in \complement (\mathcal{S}_\xi$-$\mathfrak{Sl}_1)$, since the canonical $c_{00}$ basis is an $\ell_1^\xi$ spreading model in $\ell_1$ and $X_\xi$.  We claim that $j i$ is universal for $\complement \mathcal{S}_\xi$-$\mathfrak{Sl}_1$

Let $B:X\to Y$ be an operator preserving an $\ell_1^\xi$ spreading model. Then either $B$ preserves a copy of $\ell_1$ or $B$ preserves a weakly null $\ell_1^\xi$ spreading model.  If $(x_n)_{n=1}^\infty$ is such that $(x_n)_{n=1}^\infty$ and $(Bx_n)_{n=1}^\infty$ are equivalent to the $\ell_1$ basis, then the maps $C:\ell_1\to X$, $A_1:[Bx_n: n\in \nn]\to X_\xi$ are such that $A_1BC=i$. If $A:E\to \ell_\infty$ is any extension of $jA_1$ (using the injectivity of $\ell_\infty$) , $ABC=ji$.

Suppose that $(x_n)_{n=1}^\infty\subset X$ is a weakly null $\ell_1^\xi$ spreading model in $X$ and that $(Bx_n)_{n=1}^\infty$ is an $\ell_1^\xi$ spreading model in $Y$. By Theorem \ref{the dichotomy}, after passing to a subsequence, we may assume $(Bx_n)_{n=1}^\infty$ is $\mathcal{S}_\xi$ unconditional. Since $(Bx_n)_{n=1}^\infty$ is an $\ell_1^\xi$ spreading model, there exists a constant $C>0$ such that $$\|\sum_{n\in E} a_n Bx_n\|\geqslant C \sum_{n\in E} |a_n|$$ for all $E\in \sss_\xi$ and scalars $(a_n)_{n\in E}$.    Since $(Bx_n)_{n=1}^\infty$ is $\mathcal{S}_\xi$ unconditional, there exists $C'>0$ such that $$\|\sum_{n=1}^\infty a_n Bx_n\|\geqslant C' \sup_{E\in \sss_\xi}\|\sum_{n\in E} a_n Bx_n\|$$ for any scalar sequence $(a_n)_{n=1}^\infty \in c_{00}$.   Now $$\|\sum_{n=1}^\infty a_n Bx_n\| \geqslant CC'\sup_{E\in \sss_\xi}\sum_{n\in E} |a_n|= CC'\|\sum_{n=1}^\infty a_n e_n\|_{X_\xi}$$ for any $(a_n)_{n=1}^\infty \in c_{00}$.  From this it follows that $C:\ell_1\to X$, $A_1:[Bx_n: n\in \nn]\to X_\xi$ given by $Ce_n=x_n$, $A_1Bx_n=e_n$ satisfy $A_1BC=i$. Then if $A:Y\to \ell_\infty$ is any extension of $jA_1$, $ABC=ji$.

$(ii)$ Let $K=\{\sum_{i\in E} e_i: E\in \mathcal{S}_\xi\}\subset c_{00}$. Here, $\sum_{i\in \varnothing } e_i=0$.  Let us  define $[\cdot]:c_{00}\to \rr$ by $$[x]=\inf\bigl\{\sum_{i=1}^n |a_i|:n\in\nn,  x=\sum_{i=1}^n a_i x_i, x_i\in K\}.$$ Then $[\cdot]$ is a norm on $c_{00}$.   Let $Y_\xi$ be the completion of $c_{00}$ with respect to this norm.  Now fix $x\in c_{00}$ and $j\in \nn$ such that $|e^*_j(x)|=\|x\|_{c_0}$.  Suppose $x=\sum_{i=1}^n a_i x_i$, $x_i\in K$.  Note that \begin{align*} \|x\|_{c_0} & = |e^*_j(\sum_{i=1}^n a_i x_i)| \leqslant \sum_{i=1}^n  |a_i||e^*_j(x_i)| \leqslant \sum_{i=1}^n |a_i|. \end{align*} From this it follows that $[\cdot] \geqslant \|\cdot\|_{c_0}$ on $c_{00}$.  In particular, it is easy to see that $[e_i]=1$ for all $i\in \nn$. Moreover, if $F\subset \nn$ is finite and $x=\sum_{i=1}^n a_i x_i$ for some $x_1, \ldots, x_n\in K$, then $$Fx=\sum_{i=1}^n a_i Fx_i$$ and $Fx_i\in K$. From this it follows that $[Fx]\leqslant [x]$, and the canonical $c_{00}$ basis is normalized and $1$-suppression unconditional in $Y_\xi$.  

By standard arguments, for any $E\in \mathcal{S}_\xi$ and any scalars $(a_i)_{i\in E}\subset [-1,1]$, $$\sum_{i\in E} a_i e_i \in \text{co}\Bigl( \sum_{i\in F} e_i - \sum_{i\in G} e_i: F, G\subset E\Bigr) \subset 2 B_{Y_\xi}.$$  From this it follows that the $c_{00}$ basis is a $c_0^\xi$ spreading model in $Y_\xi$. Now let $j:Y_\xi\to \ell_\infty$ be the canonical inclusion, so that $j$ preserves the $c_0^\xi$ spreading model $(e_i)_{i=1}^\infty$. 

Now suppose that $B:X\to Y$ is an operator and $(x_i)_{i=1}^\infty\subset X$ is a $c_0^\xi$ spreading model such that $(Bx_i)_{i=1}^\infty$ is also a $c_0^\xi$ spreading model.  Since $\xi>0$, $(Bx_n)_{n=1}^\infty$ must be weakly null and seminormalized. We may pass to a subsequence and relabel to assume that $(Bx_i)_{i=1}^\infty$ is seminormalized and basic. Then the map $Bx_i\mapsto e_i\in \ell_\infty$ extends to a bounded operator from $[Bx_i: i\in \nn]$ into $\ell_\infty$. By injectivity, we may extend this to a bounded, linear operator $A:Y\to \ell_\infty$ such that $ABx_i=e_i$.  Now define $C:c_{00}\to X$ by $C\sum_{i=1}^\infty a_i e_i= \sum_{i=1}^\infty a_i x_i$. Since $(x_i)_{i=1}^\infty$ is a $c_0^\xi$ spreading model, there exists a constant $c$ such that $\|\sum_{i\in E} x_i\|\leqslant c$ for all $E\in \mathcal{S}_\xi$.  In particular, if $x\in K$, $\|Cx\| \leqslant c$.  Now for any $x\in c_{00}$, if we write $x=\sum_{i=1}^n a_i x_i$ with $x_i\in K$, then $$\|Cx\| \leqslant \sum_{i=1}^n |a_i|\|Cx_i\|\leqslant c\sum_{i=1}^n |a_i|.$$   Taking the infimum over all such decompositions $x=\sum_{i=1}^n a_i x_i$, we deduce that $\|Cx\|\leqslant c[x]$ for all $x\in c_{00}$.  It then follows that $C:c_{00}\to X$ extends to an operator which we still call $C$ from $Y_\xi$ to $X$ such that $Ce_i=x_i$.  Then $j:Y_\xi\to \ell_\infty$ factors through $B:X\to Y$ by $ABC=j$.

$(iii)$ We first recall that $\mathcal{S}_\xi$ is treated both as a set of subsets of $\nn$ and as a set of finite sequences. As a set of sequences, $\mathcal{S}_\xi$ is a tree of  order $\omega^\xi+1$ and $\mathcal{S}_\xi^\zeta$ is also regular for every ordinal $\zeta$. In particular, $\mathcal{S}_\xi^{\omega^\xi}=\{\varnothing\}$. Let $T_\xi=\mathcal{S}_\xi\setminus \{\varnothing\}$ and define $L\subset c_{00}(T_\xi)$ by $$L=\bigl\{\sum_{E\preceq F\preceq G} e_F: E\preceq G, E,G\in T_\xi\bigr\}.$$  Let $Z_\xi$ be the completion of $c_{00}(T_\xi)$ with respect to the norm $$[x]=\inf \big\{\sum_{i=1}^n |a_i|: n\in \nn, x=\sum_{i=1}^n a_i x_i, x_i\in L\bigr\}.$$    Arguing as in $(ii)$, it is easy to see that the basis $(e_E)_{E\in T_\xi}$ is a normalized, $1$-unconditional basis for $Z_\xi$ and  $(2^{-1}e_F)_{F\preceq G}\in T_{\mathfrak{S}c_0}(I_{Z_\xi}, Z_\xi, Z_\xi, 2)$ for every $G\in \mathcal{S}_\xi$. From this it follows that $(2^{-1}e_F)_{F\preceq G}\in T_{\mathfrak{S}c_0}(I_{Z_\xi}, Z_\xi, Z_\xi, 2)^\zeta$ for each $G\in \mathcal{S}_\xi^\zeta$, and $\varnothing\in T_{\mathfrak{S}c_0}(I_{Z_\xi}, Z_\xi, Z_\xi, 2)^{\omega^\xi}$. Therefore $r_{c_0}(Z_\xi, Z_\xi, I_{Z_\xi})>\omega^\xi$.  Now let $j:Z_\xi\to \ell_\infty(T_\xi)$ be the canonical inclusion.

Let $(E_n)_{n=1}^\infty$ be some enumeration of $\mathcal{S}_\xi\setminus \{\varnothing\}$ such that if $E_m\prec E_n$, $m<n$. Fix a sequence $(\ee_n)_{n=1}^\infty$ of positive numbers such that $\prod_{n=1}^\infty (1+\ee_n)<2$. Define $h:\mathcal{S}_\xi\to [0, \omega^\xi]$ by $h(E)=\max\{\zeta\leqslant \omega^\xi: E\in \mathcal{S}_\xi^\zeta\}$.  Note that since $\xi\geqslant \omega$, $1+\xi=\xi$, and $\omega^{1+\xi}=\omega^\xi$. Assume that $B:X\to Y$ is such that $r_{c_0}(X,Y,B)>\omega^\xi$ 
and fix $K$ such that $o(T_{\mathfrak{S}c_0}(B,X,Y,K))>\omega^\xi$.  We now define a collection $(x_E)_{E\in T_\xi}\subset X$ such that if $1\leqslant k_1<\ldots <k_r$ are such that $E_{k_1}\prec \ldots \prec E_{k_r}$, $(x_{E_{k_i}})_{i=1}^r\in T_{\mathfrak{S}c_0}(B,X,Y,K)^{\omega h(E_{k_r})}$,
as well as finite subsets $S_1, S_2, \ldots$ of $B_{Y^*}$ such that for any $n\in \nn$, any  $y\in \text{span}\{Bx_{E_1}, \ldots, Bx_{E_n}\}$, $$(1+\ee_n)\max_{y^*\in S_{n+1}}|y^*(y)| \geqslant \|y\|.$$   First, since $\omega h(E_1)<\omega^{1+\xi}=\omega^\xi$, we may fix any $x_{E_1}\in X$ such that $(x_{E_1})\in T_{\mathfrak{S}c_0}(B,X,Y,K)^{h(E_1)}$. Now assume that $x_{E_1}, \ldots, x_{E_n}$ have been chosen.   Now fix a finite set $S_{n+1}\subset B_{Y^*}$  such that for  any  $y\in \text{span}\{Bx_{E_1}, \ldots, Bx_{E_n}\}$, $$(1+\ee_n)\max_{y^*\in S_{n+1}}|y^*(y)| \geqslant \|y\|.$$ Let $l=|S_{n+1}|+1$. Fix $k_1<\ldots <k_r$ such that $E_{k_1}\prec \ldots \prec E_{k_r}\prec E_{n+1}$, and that every non-empty, proper initial segment of $E_{n+1}$ lies in $\{E_{k_1}, \ldots, E_{k_r}\}$.    If $|E_{n+1}|=1$, we let $r=0$, $E_{k_r}=\varnothing$, and $(x_{E_i})_{i=1}^r:=\varnothing$.

By hypothesis, $$(x_{E_i})_{i=1}^r\in T_{\mathfrak{S}c_0}(B, X, Y, K)^{\omega h(E_{k_r})}.$$   Since $E_{k_r}\prec E_{n+1}$, $h(E_{k_r})\geqslant h(E_{n+1})+1$ and $\omega h(E_{k_r})\geqslant \omega h(E_{n+1})+\omega>\omega h(E_{n+1})+l$.  This means there exists a sequence $(w_i)_{i=1}^l$ such that $$(x_{E_1}, \ldots, x_{E_{k_r}}, w_1, \ldots, w_l)\in T_{\mathfrak{S}c_0}(B,X,Y,K)^{\omega h(E_{n+1})}.$$   
Simply by comparing dimensions, we may fix $w=\sum_{i=1}^l a_i w_i$ such that $\max_{1\leqslant i\leqslant l}|a_i|\neq 0$ and $w\in \cap_{y^*\in S_{n+1}} \ker(B^*y^*)$. By scaling, we may assume $\max_{1\leqslant i\leqslant l}|a_i|=1$. Let $x_{E_{n+1}}=w$.  This completes the recursive construction.

Now arguing as in the proof of Proposition \ref{factfact}, $(Bx_{E_n})_{n=1}^\infty$ is a Schauder basis for its closed span. It follows from the definition of $T_{\mathfrak{S}c_0}(B,X,Y,K)$ that $(x_{E_n})_{n=1}^\infty$ is seminormalized. More precisely, $1/K\leqslant \|Bx_{E_n}\| \leqslant \|B\|$. From this and injectivity of $\ell_\infty(T_\xi)$, there exists a bounded, linear map $A:Y\to \ell_\infty(T_\xi)$ such that $ABx_E=e_E$. Now define $C:c_{00}(T_\xi)\to X$ by $C\sum_{E\in T_\xi} a_E e_E=\sum_{E\in T_\xi}a_E x_E$. If $x=\sum_{E\preceq F\preceq G} e_F\in L$, then $\|Cx\|\leqslant 1$, since by construction $(x_F)_{E\preceq F\preceq G}\in T_{\mathfrak{S}c_0}(B,X,Y,K)$.   If $x=\sum_{E\in T_\xi}a_E e_E\in c_{00}(T_\xi)$ and $x=\sum_{i=1}^n a_i x_i$, $x_i\in L$, then by the triangle inequality, $$\|Cx\|\leqslant \sum_{i=1}^n |a_i|\|Cx_i\|\leqslant \sum_{i=1}^n |a_i|.$$  Taking the infimum over all such representations of $x$, we deduce that $C:(c_{00}(T_\xi), [\cdot])\to X$ has norm at most $1$ and extends by density to a norm at most $1$ operator $C:Z_\xi\to X$ such that $Ce_E=x_E$. Then $ABC=j$.

\end{proof}

\section{Universal Operators: Negative Results}

The main theorem of this section is a list of ideals whose complements do not admit universal operators. Items $(i)$ and $(ii)$ in Theorem \ref{soo bad}  are quantitative improvements of the results of Oikhberg \cite{Oik-Positivity} in which he showed that $\complement \mathfrak{S}$ does not have a universal operator while $\complement \mathfrak{FS}$ does. Essentially, we show that a universal factoring operator for $\complement \mathfrak{S}$ must be in the lowest complexity outside of $\mathfrak{FS}$.

\begin{theorem} \begin{enumerate}[(i)]

\item Any operator which factors through every non-strictly singular operator belongs to the class $\mathfrak{S}^2$. In particular, the complement of $\mathfrak{S}^\xi$ has a universal factoring operator if and only if $\xi=1$ (in this case $\mathfrak{S}^1=\mathfrak{FS}$).

\item Any operator which factors through every non-strictly singular operator belongs to the class $\mathcal{S}_1$-$\mathfrak{S}$. 

\item For each $0<\xi<\omega_1$, there is no $\complement \mathfrak{BS}^\xi$-universal factoring operator in $\complement \mathfrak{BS}^\xi$. In particular, $\complement \mathfrak{BS}$ does not have a universal operator. 
\item For each $0<\xi<\omega_1$, there is no $\complement \mathfrak{wBS}^\xi$-universal factoring operator in $\complement \mathfrak{wBS}^\xi$.
In particular $\complement \mathfrak{wBS}$ does not have a universal operator.\end{enumerate}

\label{soo bad}
\end{theorem}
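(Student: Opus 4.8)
\textbf{Overall strategy.} For each item I plan to argue by contradiction: assume $U$ is a universal factoring operator living in the complement of the class in question, and use the factorization property to pin down where $U$ must sit in the ordinal hierarchy, then exhibit an operator in the complement through which $U$ cannot factor. The engine for items $(i)$ and $(ii)$ is a ``low complexity'' phenomenon: a universal operator must factor through operators of arbitrarily high Szlenk-type index inside the complement, and by the definition of $\mathfrak{S}^\xi$ (resp.\ $\mathcal{S}_1$-$\mathfrak{S}$) via the tree $T_\mathfrak{S}(A,X,Y,K)$ and the ordinal $r_\mathfrak{S}$, factoring through such operators forces $r_\mathfrak{S}(E,F,U)$ to be \emph{small} — this is the crucial order-reversing feature of the factorization tree. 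The point is that if $U = DBC$ and $B$ has a long $T_\mathfrak{S}$-tree, that does not help $U$; rather, one builds a branch-selecting argument showing that a \emph{single} operator $B$ in $\complement\mathfrak{S}$ can be constructed (a Tsirelson-type or Schreier-type space operator) whose $T_\mathfrak{S}$-tree is controlled so that any operator factoring through it has $o(T_\mathfrak{S}(\cdot)) \leqslant \omega^2$ (for $(i)$) or lies in $\mathcal{S}_1$-$\mathfrak{S}$ (for $(ii)$). Combined with Oikhberg's observation that $\complement\mathfrak{FS} = \complement\mathfrak{S}^1$ \emph{does} have a universal operator (Proposition \ref{thats just super}), this gives the ``if and only if'' in $(i)$: for $\xi = 1$ a universal operator exists, and for $\xi \geqslant 2$ any candidate universal operator would be forced into $\mathfrak{S}^2 \subset \mathfrak{S}^\xi$, i.e.\ into the class itself, a contradiction.

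\textbf{Items $(iii)$ and $(iv)$ via a transfinite diagonalization.} Here the plan is different: fix $0 < \xi < \omega_1$ and suppose $U \in \complement\mathfrak{BS}^\xi$ were $\complement\mathfrak{BS}^\xi$-universal. Using the identification $\mathcal{S}_\xi$-$\mathfrak{Sl}_1 \cap \mathfrak{W} = \mathcal{S}_\xi$-$\mathfrak{W} = \mathfrak{BS}^\xi$ and the fact (cited from \cite{BC-Scand}) that $\mathfrak{BS}^\xi$ has the space factorization property, I would first reduce to the case where $U$ is (a restriction of) the identity on a space $Z$ with $I_Z \in \complement\mathfrak{BS}^\xi$, i.e.\ $Z$ admits, for some $\delta > 0$, a bounded sequence all of whose convex combinations over $\mathcal{S}_\xi$-sets have norm $\geqslant \delta$ times the $\ell_1$-sum — an $\ell_1^\xi$ spreading model. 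Then, since for \emph{every} countable ordinal $\zeta$ with $\xi < \zeta < \omega_1$ the Schreier space $X_\zeta$ (or Tsirelson space $T_\zeta$) carries an $\ell_1^\zeta \supsetneq \ell_1^\xi$ spreading model and hence $I_{X_\zeta} \in \complement\mathfrak{BS}^\xi$, universality forces $I_{X_\zeta}$ to factor through $I_Z$, so by the Remark $X_\zeta$ embeds in $Z$ for all $\zeta \in (\xi,\omega_1)$. The contradiction comes from a complexity/ordinal-index computation: a fixed separable space $Z$ cannot contain isomorphic copies of $X_\zeta$ for all countable $\zeta$, because the Bourgain $\ell_1$-index (or the relevant Schreier-unconditionality index) of $Z$ is a fixed countable ordinal that bounds the indices of all its subspaces, whereas $\mathrm{index}(X_\zeta) \to \omega_1$. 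The weak Banach-Saks case $(iv)$ is handled identically after replacing ``bounded sequence'' by ``weakly null sequence'' throughout, using weakly null $\ell_1^\zeta$ spreading models (which exist in these Schreier spaces) and observing that $I_{X_\zeta} \in \complement\mathfrak{wBS}^\xi$ for $\zeta > \xi$.

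\textbf{Main obstacle.} I expect the hardest part to be item $(i)$'s sharp bound $\mathfrak{S}^2$ rather than merely ``some $\mathfrak{S}^\eta$''. Getting exactly $\omega^2$ requires an explicit construction of a non-strictly-singular operator $B$ — plausibly built on a Schreier space $X_1$ of order $1$, or an interpolation-type operator — such that any normalized block tree witnessing $r_\mathfrak{S}$ for an operator factoring through $B$ is trapped at height $\leqslant \omega^2$, which is a delicate branch-pruning argument on $[\nn]^{<\nn}$ comparing the derivation indices of $T_\mathfrak{S}(B,\cdot)$ and of the composed trees. For $(iii)$ and $(iv)$ the main technical point is ensuring the reduction ``universal operator $\Rightarrow$ restriction of an identity on a space in the complement'' is legitimate; this is where the space factorization property of $\mathfrak{BS}^\xi$ is essential, and I would need to verify that factoring-through (not just factoring-through-a-restriction) interacts correctly with it, falling back on Remark \ref{get away from me}-style bookkeeping if the two notions need to be bridged. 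The remaining steps — the index computation showing $\sup_\zeta \mathrm{index}(X_\zeta) = \omega_1$ while a separable space has countable index — are standard and can be cited.
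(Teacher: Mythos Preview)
Your proposal has genuine gaps in each part.

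For $(i)$ and $(ii)$, the ``single operator $B$'' strategy cannot work: any $B \in \complement\mathfrak{S}$ satisfies $r_\mathfrak{S}(B) = \infty$, and $B$ factors through itself, so there is no $B \in \complement\mathfrak{S}$ with the property that every operator factoring through it lies in $\mathfrak{S}^2$. The paper's argument is entirely different. It observes that a universal $U$ must factor through \emph{both} $I_{c_0}$ and $I_{\ell_1}$. Assuming $r_\mathfrak{S}(U) > \omega^2$, one uses the tree $T_\mathfrak{S}(U,X,Y,K)$ of order $> \omega^2$ to recursively select $x_1,\ldots,x_n \in S_X$ together with block perturbations $(u_i)$ in $c_0$ and $(v_i)$ in $\ell_1$; the $c_0$-factorization forces $\|\sum_{i=1}^n x_i\|$ to be bounded independently of $n$, while the $\ell_1$-factorization forces it to grow linearly in $n$. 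Choosing $n$ large yields the contradiction. The exponent $2$ in $\omega^2$ is precisely what is consumed by the finite-codimensional blocking step at each stage of the recursion. Part $(ii)$ uses the same $c_0$/$\ell_1$ pair, replacing the tree hypothesis by the $\mathcal{S}_1$-spreading-model condition.

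For $(iii)$ your reduction is invalid on two counts. First, the space factorization property of $\mathfrak{BS}^\xi$ says that operators \emph{in} $\mathfrak{BS}^\xi$ factor through spaces in $\text{Space}(\mathfrak{BS}^\xi)$; it says nothing about operators in the complement, so you cannot assume $U$ is an identity. Second, even granting that reduction, universality says $U$ factors through $I_{X_\zeta}$, not the reverse; so you would conclude that the domain of $U$ embeds complementably into every $X_\zeta$, not that every $X_\zeta$ embeds into a fixed space, and your index contradiction evaporates. The paper's argument is a one-line application of Remark~\ref{get away from me}: since $\mathfrak{BS}^\xi = \mathfrak{W} \cap \mathcal{S}_\xi\text{-}\mathfrak{Sl}_1$, with $I_{c_0} \in \mathcal{S}_\xi\text{-}\mathfrak{Sl}_1 \setminus \mathfrak{W}$ and $I_{T_\xi} \in \mathfrak{W} \setminus \mathcal{S}_\xi\text{-}\mathfrak{Sl}_1$, neither factor is contained in the other, so the complement of the intersection has no universal operator.

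For $(iv)$ your overspill idea is in the right spirit, and this is the one item where the paper does run a transfinite argument with Tsirelson spaces $T_\gamma$, $\gamma > \xi$. But again there is no reduction to an identity: one works directly with the universal $U:E\to F$ and a fixed weakly null sequence $(u_n) \subset E$ such that $(Uu_n)$ is an $\ell_1^\xi$ spreading model. Factoring $U = A I_{T_\gamma} C$ and passing $(Cu_n)$ to a seminormalized block subsequence in $T_\gamma$ yields, for each $\gamma > \xi$, a subsequence of $(u_n)$ which is a $1/k_\gamma$-$\ell_1^\gamma$ spreading model. Pigeonholing on $k_\gamma$ and a standard overspill then produce a subsequence of $(u_n)$ equivalent to the $\ell_1$ basis, contradicting weak nullity.
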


\begin{proof}

 $(i)$ To obtain a contradiction, assume that $B:X\to Y$ lies in $\complement \mathfrak{S}^2$ and factors through every non-strictly singular operator. Then there exist operators $C_\infty:c_0\to X$, $A_\infty:Y\to c_0$ and $C_1:\ell_1\to X$, $A_1:Y\to \ell_1$ such that $A_\infty C_\infty= A_1C_1=B$. Fix $K>1$ such that $o(T_\mathfrak{S}(B,X,Y,K))>\omega^2$. Fix $\ee>0$ such that $$2\|A_\infty\|K \ee < \|A_1\|^{-2}K^{-2}-2\ee.$$    Fix $n\in \nn$ such that $$\|A_\infty\|K\Bigl(\|C_\infty\|+2n\ee\Bigr) < \|A_1\|^{-1}K^{-1}\Bigl(n \|A_1\|^{-1}K^{-1} - 2n\ee\Bigr).$$  
 We will select $x_1, \ldots, x_n\in S_X$ and block sequences $(u_i)_{i=1}^n$, $(v_i)_{i=1}^n$ in $c_0$, $\ell_1$, respectively, such that for each $1\leqslant i\leqslant n$, \begin{enumerate}[(i)]\item  $(x_j)_{=1}^i\in T_\mathfrak{S}(B,X,Y,K)^{\omega(n-i)}$,  \item  $\|u_i-C_\infty x_i\|<\ee$,  $\|v_i-C_1 x_i\|<\ee$. 
 \end{enumerate} We note that since $o(T_\mathfrak{S}(B,X,Y,K))>\omega^2$, there exists $x_1\in S_X$ such that $(x_1)\in T_\mathfrak{S}(B,X,Y,K)^{\omega (n-1)}$. Next, assume $x_1, \ldots, x_k$ and block sequences $(u_i)_{i=1}^k$, $(v_i)_{i=1}^k$ have been chosen, where each $u_i$ and $v_i$ have finite supports. Fix $m\in \nn$ such that $u_1, \ldots, u_k \in \text{span}\{e_j: j\leqslant m\}\subset c_0$, $v_1, \ldots, v_k\subset \text{span}\{e_j: j\leqslant m\}\subset \ell_1$  have been chosen. Note that since $(x_1, \ldots, x_k)\in T_\mathfrak{S}(B,X,Y,K)^{\omega(n-k+1)}$,  there exists a sequence $(z_i)_{i=1}^{2m+1}$ such that $(x_1, \ldots, x_k, z_1, \ldots, z_{2m+1})\in T_{\mathfrak{S}}(B,X,Y,K)^{\omega(n-k)}$. By a dimension argument, there exists a norm $1$ vector $x_{k+1}\in \text{span}\{z_1, \ldots, z_{2m+1}\}$ such that $C_\infty x_{k+1}\in \overline{\text{span}}\{e_j: j>m\}\subset c_0$ and $C_1 x_{k+1}\in \overline{\text{span}}\{e_j:j>m\}\subset \ell_1$. We may now fix vectors $u_{k+1}\in \text{span}\{e_j:j>m\}$ and $v_{k+1}\in \text{span}\{e_j: j>m\}$ such that $\|C_\infty x_{k+1}-u_{k+1}\|<\ee$ and $\|C_1 x_{k+1}-v_{k+1}\|<\ee$. This completes the recursive construction. 
 
 Now note that for each $1\leqslant i\leqslant n$, $$\|u_i\|\leqslant \|C_\infty\| \|x_i\|+\ee=\|C_\infty\|+\ee$$ and $$\|v_i\|\geqslant \|C_1 x_i\|-\ee \geqslant \|A_1\|^{-1}\|A_1C_1x_i\|-\ee \geqslant \|A_1\|^{-1}K^{-1} \|x_i\|-\ee = \|A_1\|^{-1}K^{-1}-\ee. $$

\noindent Then \begin{align*} \|\sum_{i=1}^n x_i\| & \leqslant K \|\sum_{i=1}^n Bx_i\| \leqslant  \|A_\infty\|K \|\sum_{i=1}^n C_\infty x_i\| \\ & \leqslant \|A_\infty\|K \Bigl(\|\sum_{i=1}^n u_i\|+ \ee n\Bigr) \\ & \leqslant \|A_\infty\|K\Bigl(\|C_\infty\|+2 \ee n\Bigr),\end{align*} while \begin{align*} \|\sum_{i=1}^n x_i\| & \geqslant K^{-1} \|\sum_{i=1}^n Bx_i\| \geqslant \|A_1\|^{-1}K^{-1}\Bigl(\|\sum_{i=1}^n v_i\| -n \ee\Bigr) \\ & \geqslant \|A_1\|^{-1}K^{-1}\Bigl(n\|A_1\|^{-1}K^{-1}-2n \ee\Bigr).\end{align*} By our choice of $\ee$ and $n$, these two inequalities yield a contradiction. 

$(ii)$ We note that the following proof is a quantified version of Oihkberg's proof that there is not $\complement \mathfrak{S}$-universal operator. 

To obtain a contradiction, assume $B:X\to Y$ lies in $\complement \mathcal{S}_1$-$\mathfrak{S}$ and factors through every non-strictly singular operator. Then $B$ factors through both $I_{c_0}$ and $I_{\ell_1}$.  Fix operators $C_\infty:X\to c_0$, $A_\infty:c_0\to Y$ and $C_1:X\to \ell_1$ and $A_1:\ell_1\to Y$ such that $A_\infty C_\infty=A_1C_1=B$.    Fix a normalized basic sequence $(x_n)_{n=1}^\infty\subset X$ and $c>0$ such that $\|A\sum_{i\in E} a_i x_i\|\geqslant c \|\sum_{i\in E} a_i x_i\|$ for all $E\in \mathcal{S}_1$ and all scalars $(a_i)_{i\in E}$.    By passing to a subsequence, we may assume $(C_\infty x_n)_{n=1}^\infty$ and $(C_1x_n)_{n=1}^\infty$ are coordinate-wise convergent in $c_0$ and $\ell_1$, respectively. Fix a  sequence $(\ee_n)_{n=1}^\infty$ such that $\sum_{n=1}^\infty \ee_n<\infty$ and note that we may select $p_1<p_2<\ldots$,  a seminormalized block sequence $(u_n)_{n=1}^\infty$ in $c_0$, and a seminormalized block sequence $(v_n)_{n=1}^\infty$ in $\ell_1$ such that for all $n\in \nn$, $$\|C_\infty(x_{p_{2n}}- x_{p_{2n+1}})- u_n\|<\ee_n$$ and $$\|C_1 (x_{p_{2n}}- x_{p_{2n+1}})-v_n\|<\ee_n.$$ From this it follows that $$\underset{n}{\lim\sup} \|C_\infty\frac{1}{n}\sum_{i=n+1}^{2n} (x_{p_{2i}}-x_{p_{2i+1}}) \|=0,$$ while $$\underset{n}{\lim\inf} \|C_1\frac{1}{n}\sum_{i=n+1}^{2n} (x_{p_{2i}}-x_{p_{2i+1}})\|>0.$$  But since $\cup_{i=n+1}^{2n} \{p_{2i}, p_{2i+1}\}\in \mathcal{S}_1$ for all $n\in \nn$, it follows that for all $n\in \nn$, \begin{align*} \|C_1 \frac{1}{n}\sum_{i=n+1}^{2n} (x_{p_{2i}}-x_{p_{2i+1}})\| & \leqslant \|C_1\|\|\frac{1}{n}\sum_{i=n+1}^{2n} (x_{p_{2i}}-x_{p_{2i+1}})\| \leqslant \|C_1\|c^{-1}\|B\frac{1}{n}\sum_{i=n+1}^{2n}(x_{p_{2i}}-x_{p_{2i+1}})\| \\ & \leqslant \|C_1\|c^{-1}\|A_\infty\|\|C_\infty\| \|\frac{1}{n}\sum_{i=n+1}^{2n} (x_{p_{2i}}-x_{p_{2i+1}})\|.  \end{align*}

This contradiction finishes $(ii)$.

$(iii)$ As observed in \cite{BC-Scand}, $\mathfrak{BS}^\xi = \mathfrak{W}\cap \mathcal{S}_\xi$-$\mathfrak{Sl}_1$. Since neither of the classes $\mathfrak{W}$, $\mathcal{S}_\xi$-$\mathfrak{Sl}_1$ is contained in the other, Remark \ref{get away from me} yields that the complement of the intersection cannot have a universal factoring operator. In order to see that neither of these classes is contained in the other, we observe that $I_{c_0}\in \mathcal{S}_\xi$-$\mathfrak{Sl}_1\cap \complement \mathfrak{W}$ for any $0<\xi<\omega_1$, while for any $0<\xi<\omega_1$, $I_{T_\xi}\in \mathfrak{W}\cap \complement \mathcal{S}_\xi$-$\mathfrak{Sl}_1$.

$(iv)$ The complement of $\mathfrak{wBS}^\xi$ is the class of those operators $A:X\to Y$ such that there exists a weakly null sequence $(x_n)_{n=1}^\infty$ such that $(Ax_n)_{n=1}^\infty$ is an $\ell_1^\xi$ spreading model. To that end, if $U:E\to F$ is a universal factoring operator for $\complement \mathfrak{wBS}^\xi$, then we may fix a weakly null sequence $(u_n)_{n=1}^\infty \subset B_E$ such that $(Uu_n)_{n=1}^\infty$, and therefore $(u_n)_{n=1}^\infty$, is an $\ell_1^\xi$ spreading model.  For each $\xi<\gamma<\omega_1$, canonical basis of $T_\gamma$ is an $\ell_1^\gamma$ spreading model. Since $\xi<\gamma<\omega_1$, there exists $m\in \nn$ such that $m\leqslant E\in \mathcal{S}_\xi$ implies $E\in \mathcal{S}_\gamma$. From this it follows that any $\ell_1^\gamma$ spreading model is also an $\ell_1^\xi$ spreading model.  The canonical basis of $T_\gamma$ is therefore an $\ell_1^\xi$ spreading model, and  $U$ factors through the identity on $T_\gamma$. Let $C:E\to T_\gamma$, $A:T_\gamma\to F$ be such that $U=AI_{T_\gamma}C$. Then since $(u_n)_{n=1}^\infty$ is weakly null, some subsequence of $(Cu_n)_{n=1}^\infty$ is equivalent to a seminormalilzed block sequence in $T_\gamma$. Since every seminormalized block sequence in $T_\gamma$ is an $\ell_1^\gamma$ spreading model, there exists a natural number $k_\gamma$ such that some subsequence of $(u_n)_{n=1}^\infty$ is a $1/k_\gamma$-$\ell_1^\gamma$ spreading model. Then there exists $k\in \nn$ such that $\sup \{\gamma\in (\xi, \omega_1): k_\gamma=k\}=\omega_1$, whence by a standard overspill argument, some subsequence of $(u_n)_{n=1}^\infty$ is equivalent to the $\ell_1$ basis. This contradicts the fact that $(u_n)_{n=1}^\infty$ is assumed to be weakly null.

\end{proof}

\section{The Standard Borel Space $\mathcal{L}$, Genericity and Universality}

In this section we present a descriptive set theoretic approach for studying factorization and genericity for classes of operators. In particular, our main tool is the coding $\mathcal{L}$ of all bounded linear operators between separable Banach spaces defined in \cite{BFr-JFA}. While the coding of all separable Banach spaces $\textbf{SB}$ has been extensively studied in the context of embedding results and universality, the Standard Borel space $\mathcal{L}$ is a relatively new object. This section contains many results that are analogous to those known for $\textbf{SB}$ as well as results for operator ideals that deal with both the existence of a universal operator for the complement of an ideal and genercity of classes of operators.

This section is organized as follows: First we recall several known embedding and genericity results for $\textbf{SB}$ and introduce new relations on $\mathcal{L}$. We then state our complexity and genericity results for operator ideals restricted to $\mathcal{L}$. After this, we recall the necessary descriptive set theoretic notions and prove several lemmas. The final few  subsections contain the proofs of the main theorems.


As promised, before we state our results for $\mathcal{L}$, we recall what is known for $\textbf{SB}$. Consider the following subspaces of $\textbf{SB}$: $\textbf{REFL}=\text{Space}(\mathfrak{W})\cap \textbf{SB}$, the separable,  reflexive spaces; $\textbf{SD}=\text{Space}(\mathfrak{D})\cap \textbf{SB}$, spaces with separable dual; $\textbf{NC}_E=\text{Space}(\mathfrak{S}E)\cap \textbf{SB}$, separable spaces not containing an isomorphic copy of $E\in \textbf{SB}$; $\textbf{NU}=\text{Space}(\mathfrak{S}C(2^\mathbb{N}))\cap \textbf{SB}$, denotes the class of separable spaces not containing an isomorphic copy of every separable Banach space. Each of the above subsets is $\Pi_1^1$-complete as a subset of $\textbf{SB}$. Also, for a given $X$ in $\textbf{SB}$, the collection of all $Y\in \textbf{SB}$ that embed into $X$ is analytic. These two facts yield that none of these classes contains a space into which every other space in the class isomorphically embeds. In a series of papers and a monograph \cite{ADo-Advances,Bos-Thesis,Bos-FunD,Do-TAMS,Do-Studia,DoFe-Fund}, several authors studied different kinds of genericity for these classes of spaces. For completeness, we recall these notions below. 


\begin{definition}
Let $C \subset \textbf{SB}$. 
\begin{enumerate} 
    \item The set $C$ is \emph{Bourgain generic} if whenever every member of $C$ isomorphically embeds into $X$, then every element of $\textbf{SB}$ isomorphically embeds into $X$.
    \item A set $C$ is called \emph{Bossard generic} if whenever whenever $A\subset \textbf{SB}$ is  an analytic set which contains an isomorphic copy of each member of $C$, then there exists $Y\in A$ which contains isomorphic copies of all members of  $\textbf{SB}$. 
    \item The set $C$ is \emph{strongly bounded} if for each analytic subset $A$ of $C$, there exists an $X \in C$ which contains isomorphic copies of each member of $A$.
\end{enumerate}
\end{definition}

The term Bourgain generic (see \cite{ADo-Advances}) refers to the fact that Bourgain proved that $\textbf{REFL}$ is generic in this sense \cite{Bou-PAMS}. Bossard generic was named for the analogous reason. From the observation in the previous paragraph, it is easy to see that a class $C$ is Bourgain generic if it is Bossard generic. 
P. Dodos, by showing that $\textbf{NU}$ is strongly bounded, showed that the reverse implication holds \cite{Do-TAMS}.

\begin{theorem}
For each Banach space $E$ with a basis, $\textbf{\emph{NC}}_{E}$ is Bossard generic. Moreover, the classes $\textbf{\emph{REFL}}$, $\textbf{\emph{SD}}$, $\textbf{\emph{NC}}_E$ ($E$ minimal, infinite dimensional, not containing $\ell_1$), and $\textbf{\emph{NU}}$ are Bossard-generic and strongly bounded. 
\label{about the spaces}
\end{theorem}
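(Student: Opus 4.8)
The statement is a compilation of known results from the literature, so the plan is primarily to explain how each piece assembles rather than to reprove everything from scratch. The key structural observation is the implication noted in the paragraph preceding the theorem: for subsets of $\textbf{SB}$, Bossard genericity implies Bourgain genericity, and — crucially — if a class is \emph{strongly bounded} then (combining with the fact that the set of spaces embedding into a fixed $X$ is analytic) Bourgain genericity and Bossard genericity coincide. So the proof splits into two tasks: (1) establish Bossard genericity of $\textbf{NC}_E$ for every $E$ with a basis, and (2) establish strong boundedness of $\textbf{REFL}$, $\textbf{SD}$, $\textbf{NC}_E$ (for $E$ minimal, infinite-dimensional, not containing $\ell_1$), and $\textbf{NU}$, from which Bossard genericity for these follows automatically.

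For task (1), I would cite Bossard's theorem: the class of separable Banach spaces not containing a fixed space $E$ with a basis is Bossard generic. The argument underlying this (which I would at most sketch) goes through the fact that $\textbf{NC}_E$ is $\Pi^1_1$-complete together with a Baire-category / incomparability argument on the standard Borel space $\textbf{SB}$: if an analytic set $A$ contained an isomorphic copy of every member of $\textbf{NC}_E$, then since isomorphic embeddability into any fixed space is analytic and $\textbf{NC}_E$ is not contained in any analytic "small" set, one produces a $Y \in A$ that must be universal for all of $\textbf{SB}$ — otherwise the complexity of $\textbf{NC}_E$ would be too low. This is exactly Bossard's original line of reasoning and I would invoke it as a black box.

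For task (2), the heavy lifting is Dodos's work on strong boundedness. The plan is to quote: Bourgain's theorem that $\textbf{REFL}$ is Bourgain generic, the Argyros–Dodos "amalgamation" technique which produces, for an analytic family of spaces sharing a property (reflexivity, separable dual, not containing a fixed minimal $\ell_1$-free space), a single space in the class containing all of them — this gives strong boundedness of $\textbf{REFL}$, $\textbf{SD}$, and $\textbf{NC}_E$ in the minimal case — and finally Dodos's theorem that $\textbf{NU}$ is strongly bounded, which is the deepest input and resolves the reverse implication "strongly bounded $\Rightarrow$ Bossard generic" being genuinely applicable here. Once strong boundedness is in hand for each of these four classes, Bossard genericity follows from the general principle recorded just before the theorem statement, since each class is also $\Pi^1_1$ and hence not contained in any proper analytic subset.

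The main obstacle — and the reason this is stated as a citation-heavy theorem rather than proved in full — is the amalgamation/strong-boundedness machinery for $\textbf{NU}$ and for $\textbf{SD}$: constructing a single separable space that isometrically or isomorphically absorbs an entire analytic family while retaining the defining property requires the full Argyros–Dodos apparatus (Schauder tree bases, the $\ell_1$-Baire sum / $\mathcal{L}_\infty$-amalgamation constructions, and fine control of the Szlenk index in the $\textbf{SD}$ case). I would not attempt to reproduce this; the proposal is to state the theorem with explicit references to \cite{Bou-PAMS,Bos-Thesis,Bos-FunD,ADo-Advances,Do-TAMS,Do-Studia,DoFe-Fund} and to note that the genericity half of each assertion is a formal consequence of strong boundedness together with the $\Pi^1_1$-completeness already recorded for these subsets of $\textbf{SB}$.
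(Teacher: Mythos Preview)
Your proposal is correct and matches the paper's treatment: the paper does not prove this theorem at all but simply states it as a summary of results from \cite{ADo-Advances,Bos-Thesis,Bos-FunD,Do-TAMS,Do-Studia,DoFe-Fund}, exactly as you propose to do. One small imprecision worth tightening: the implication ``strongly bounded $\Rightarrow$ Bossard generic'' for the listed classes is not obtained class-by-class from their own strong boundedness, but rather (as the paper notes just before the theorem) from the single fact that $\textbf{NU}$ is strongly bounded, which yields the general equivalence of Bourgain and Bossard genericity for all subsets of $\textbf{SB}$.
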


We note there are several interesting papers that compute complexities of other classes of Banach spaces \cite{Braga-Czech,Braga-JMAA} and other papers that consider isometric  embeddings \cite{Kurka-StudiaAmal,Kurka-StudiaNU}.

As an extension of the the above list, we have the following result regarding $\textbf{SB}$. The proof of this theorem follows from known techniques but we could not find this statement in the literature. We give the prove of this theorem in subsection 5.4.

\begin{theorem} The following subsets of $\textbf{\emph{SB}}$ are Bossard generic:
\begin{enumerate}[(i)]
    \item $\textbf{\emph{DP}}=\text{\emph{Space}}(\mathfrak{DP})\cap \emph{\textbf{SB}}$, spaces with the Dunford-Pettis property;
    \item $\textbf{\emph{V}}=\text{\emph{Space}}(\mathfrak{V})\cap \emph{\textbf{SB}}$, Schur spaces;
    \item $ \textbf{\emph{BS}}=\text{\emph{Space}}(\mathfrak{BS})\cap \emph{\textbf{SB}}$ spaces with the Banach-Saks property;
    \item $\textbf{\emph{NC}}_E \cap \emph{\textbf{SB}}$ for a separable Banach space $E$;
    \item $\textbf{\emph{wBS}}=\text{\emph{Space}}(\mathfrak{wBS})\cap \emph{\textbf{SB}}$ spaces with the weak Banach-Saks property.
\end{enumerate}
\label{spatial}
\end{theorem}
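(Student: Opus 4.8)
The plan is to show each of the five classes is Bossard generic by exhibiting, for each one, a witnessing family of spaces whose only upper bounds (under isomorphic embedding) into an analytic set force that set to contain a universal space; equivalently, by the standard reduction recalled in the excerpt, it suffices to show that none of these classes is contained in an analytic set that does not already contain a universal element, and that in fact the containment relation is rigid enough to import Bossard genericity from $\textbf{NU}$ or from the known cases in Theorem~\ref{about the spaces}. Concretely, the cleanest route is the one used for $\textbf{NC}_E$: find, for each class $C$ in the list, a Borel map $\Phi:\textbf{SB}\to \textbf{SB}$ such that $\Phi(X)\in C$ for every $X$, and such that $X$ embeds into $Y$ whenever $\Phi(X)$ embeds into $\Phi(Y)$ (a "Borel parametrized reduction" of the embedding relation on $\textbf{SB}$ into the embedding relation on $C$). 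Given such a $\Phi$, if $A$ is analytic and contains an isomorphic copy of every member of $C$, then $\Phi^{-1}$-pulling back shows $A$ effectively contains a copy of every separable space, and one invokes Bossard genericity of $\textbf{NU}$ (part of Theorem~\ref{about the spaces}, via Bossard's original theorem) to conclude.

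For the individual classes I would argue as follows. For $\textbf{V}$ (Schur spaces) the natural candidate is $\Phi(X)=(\sum_n X_n)_{\ell_1}$-type constructions or, better, the James-tree / Bourgain--Delbaen-style $\ell_1$-sums that are Schur while still "seeing" $X$ via a quotient or a subspace; here one must be careful that the Schur property is preserved — $\ell_1$-sums of finite-dimensional pieces are Schur, so discretizing $X$ through an FDD and taking an $\ell_1$-sum of the FDD spaces gives a Schur space into which no copy of $X$ need embed, so instead I would realize $X$ as a quotient and use the fact that $X$ embeds in $Y$ iff a suitable associated space does. For $\textbf{DP}$ one uses that $C(K)$ spaces and $L_1$ spaces have the Dunford--Pettis property and that $\mathfrak{DP}$ is stable under the relevant sums; the classical Bourgain construction embedding an arbitrary separable space into a space with the DP property (via $C(2^{\mathbb N})$-like hulls) should give the reduction. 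For $\textbf{BS}$ and $\textbf{wBS}$ the key is that the Schreier and Tsirelson machinery from Section~2 provides, for each countable ordinal $\gamma$, reflexive spaces with the Banach--Saks property (e.g. $T_\gamma$, which is reflexive, hence Banach--Saks being equivalent to reflexivity fails in general — so one must instead use that $X_\gamma$-type spaces or suitable convexifications have the Banach--Saks property) that still code high complexity; the genericity then follows from the $\Pi^1_1$-completeness of the Banach--Saks class (which should be established or cited) together with the Bossard-style reduction. Part (iv), $\textbf{NC}_E\cap \textbf{SB}$ for arbitrary separable $E$, extends the basis case in Theorem~\ref{about the spaces} by the standard trick of replacing $E$ with a space $\hat E$ with a basis into which $E$ embeds, or directly by noting that "not containing $E$" is $\Pi^1_1$ and that the Bourgain tree construction avoids containing $E$.

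The main obstacle I anticipate is verifying, for the sum/hull constructions $\Phi$, that the target space genuinely lies in the class $C$ — the Dunford--Pettis and Banach--Saks properties are notoriously delicate under infinite sums and under passing to subspaces, so the functor $\Phi$ must be chosen so that $\Phi(X)$ is a $C(K)$-, $L_1$-, or explicitly reflexive-Tsirelson-type space regardless of $X$, with $X$ recovered only as a subspace or a quotient of a subspace; getting the reduction property ("$\Phi(X)\hookrightarrow \Phi(Y)$ implies $X\hookrightarrow Y$") simultaneously with membership in $C$ is the crux. Once a clean $\Phi$ is in hand for each class, the descriptive-set-theoretic bookkeeping (Borelness of $\Phi$, analyticity of the embedding relation, the reduction to $\textbf{NU}$) is routine and parallels \cite{Bos-FunD,Do-TAMS}. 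I would present the five parts in the order (iv), (i), (ii), (iii), (v), since (iv) is the most direct extension of the literature and (iii),(v) require the most care with the Banach--Saks preservation.
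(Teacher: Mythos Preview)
Your overarching strategy has a logical gap. You propose a Borel map $\Phi:\textbf{SB}\to C$ with the property that $\Phi(X)\hookrightarrow\Phi(Y)\Rightarrow X\hookrightarrow Y$, and then claim that ``$\Phi^{-1}$-pulling back shows $A$ effectively contains a copy of every separable space.'' But this step does not follow: knowing that an analytic set $A$ contains an isomorphic copy of every $\Phi(X)$ gives you, for each $X$, \emph{some} element of $A$ containing $\Phi(X)$ --- it does not produce a single element of $A$ into which all separable spaces embed, and the injectivity property of $\Phi$ on the embedding relation is irrelevant here. A Borel reduction of this type is the tool for proving complexity lower bounds on the embedding relation, not Bossard genericity. (There is one cheap case you overlooked: $C(2^\nn)$ itself has the Dunford--Pettis property, so (i) is trivial.)

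What the paper actually does is the standard rank-unboundedness argument, and the whole content lies in exhibiting one explicit family that works uniformly. Fix a Schauder basis $(e_i)$ of $C(2^\nn)$ and, for each well-founded tree $T$ on $\nn$, form the James-tree-type space $\mathfrak{X}_p^T((e_i))$: the completion of $c_{00}(T\setminus\{\varnothing\})$ under the norm that takes the $\ell_p$-sum over pairwise incomparable segments of the $C(2^\nn)$-norm of the coefficients along each segment. Along any branch of $T$, the basis vectors are uniformly equivalent to the corresponding initial segment of $(e_i)$, so the Bourgain $C(2^\nn)$-embedding index of $\mathfrak{X}_p^T((e_i))$ is at least $o(T)$. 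The key lemma is that, because $T$ is well-founded, $\mathfrak{X}_p^T((e_i))$ decomposes inductively as $\ell_p$-sums of lower-order pieces; this yields by induction on $o(T)$ that $\mathfrak{X}_1^T$ is Schur (hence DP), that $\mathfrak{X}_2^T$ has the Banach--Saks property (hence weak Banach--Saks), and that $\mathfrak{X}_p^T$ is $\ell_p$-saturated (hence lies in $\textbf{NC}_E$ once $p$ is chosen so that $E$ is not $\ell_p$-saturated). Thus for the appropriate $p$ the family $\{\mathfrak{X}_p^T:T\text{ well-founded}\}$ sits inside the class $C$ with unbounded Bourgain index; the boundedness theorem for $\Pi_1^1$-ranks then forces any analytic $A$ containing isomorphs of all of $C$ to meet $\complement\textbf{NU}$. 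Your vague candidates (``$\ell_1$-sums of FDD pieces'', ``Bourgain--Delbaen-style hulls'', Tsirelson spaces $T_\gamma$) do not carry the $C(2^\nn)$ basis along branches and so cannot push the embedding index up; the James-tree construction with $(e_i)$ as the branch data is precisely the missing idea.
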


The main results of this section are to prove Bossard genericity and strong boundedness for many of previously mentioned classes of operators in $\mathcal{L}$. Studying genericity and strong boundedness for subsets of $\mathcal{L}$ is a potentially richer theory since each of the classes in the Theorem \ref{about the spaces} yields a natural operator ideal analogue, and there are many proper ideals (e.g. $\mathfrak{S}$) which do not have  interesting spatial analogues.

 In order to define these notions for $\mathcal{L}$, we need the proper analogue of `isomorphically embeds' for operators. Fortunately, we already isolated the correct property: \emph{$A \in \mathcal{L}$ factors through a restriction of an operator $B \in \mathcal{L}$}. 
 Note that all members of  $\mathcal{L}$ factor through a restriction of the identity operator on $C(2^\mathbb{N})$, and so in this sense $id_{C(2^\mathbb{N})}$ in $\mathcal{L}$ plays the role of $C(2^\mathbb{N})$ in $\textbf{SB}$. We also need introduce a notion of `isomorphic' operators which generalizes isomorphism for spaces.

\begin{definition}
 Let $A\in \mathfrak{L}(X,Y)$ and $B \in \mathfrak{L}(X',Y')$. Then $A$ is \emph{isomorphic} to $B$ and we write $A \cong B$ if there exist isomorphisms $i:X\to X'$ and $j:Y\to Y'$ such that

\[
\begin{tikzcd}
X \arrow{r}{A} \arrow[shift right, swap]{d}{i}& Y \arrow[shift right, swap]{d}{j}\\
X' \arrow{r}{B} \arrow[shift right, swap]{u}{i^{-1}} & Y' \arrow[shift right, swap]{u}{j^{-1}}
\end{tikzcd}
\]
commutes both ways.
\end{definition}

We can now introduce the definitions of generic and strongly bounded for collections of operators.

\begin{definition}
Let $\mathcal{C} \subset \mathcal{L}$. 
\begin{enumerate}
\item The set $\mathcal{C}$ is \emph{Bossard generic} if whenever $\mathcal{A} \subset \mathcal{L}$ is analytic and contains an isomorphic copy of each member of $\mathcal{C}$,  there exists an $A \in \mathcal{A}$ so that every $B \in \mathcal{L}$ factors through a restriction of $A$.
\item The set $\mathcal{C}$ is \emph{strongly bounded} if for every analytic subset $\mathcal{A}$ of $\mathcal{C}$ there is a $B \in \mathcal{C}$ such that every $A \in \mathcal{A}$ factors through a restriction of $B$. 
\end{enumerate}
\end{definition}

In section 2 we defined the notion of Bourgain generic for operators.  However, as it is a formally weaker notion than Bossard generic (and they likely coincide), our results will focus on Bossard-genericity.

We now state the main results of this section.

\begin{theorem}
Fix an ordinal  $0<\xi<\omega_1$ and $1 \leqslant p <\infty$.  Intersecting any one of the following ideals with $\mathcal{L}$ yields a $\Pi_1^1$-complete,  Bossard generic  subset  of $\mathcal{L}$: 
\begin{enumerate}
    \item weakly compact operators $\mathfrak{W}$;
    \item Asplund operators $\mathfrak{D}$;
    \item strictly singular operators $\mathfrak{S}$;
    \item  $E$-singular $\mathfrak{S}E$, for any infinite dimensional $E$;
    \item $\xi$-Banach-Saks $\mathfrak{BS}^\xi$ and $\mathcal{S}_\xi$-weakly compact $\mathcal{S}_\xi$-$\mathfrak{W}$;
    \item  $\mathcal{S}_\xi$-$E$-singular $\mathcal{S}_\xi$-$\mathfrak{S}E$ for $E$ with a $1$-spreading basis;
    \item $\mathcal{S}_\xi$-strictly singular $\mathcal{S}_\xi$-$\mathfrak{S}$.
    \item $\xi$-weak Banach Saks operators $\mathfrak{wBS}^\xi$.
\end{enumerate}
\label{lots of stuff}
\end{theorem}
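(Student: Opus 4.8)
\textbf{Proof strategy for Theorem \ref{lots of stuff}.}

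The plan is to split the assertion into two independent parts: the complexity computation ($\Pi_1^1$-completeness) and the Bossard genericity. For the complexity lower bound, I would produce, for each ideal $\mathfrak{J}$ in the list, a Borel reduction from a known $\Pi_1^1$-complete set --- the canonical choice being the set $\mathrm{WF}$ of well-founded trees on $\nn$ --- into $\mathfrak{J}\cap \mathcal{L}$. Concretely, given a tree $T$ on $\nn$ one builds in a Borel-in-$T$ fashion a Banach space $X_T$ (or a pair of spaces together with an operator) out of the Schreier-type or Tsirelson-type constructions already invoked in Section 2: the ordinal indices $r_\mathfrak{S}$, $r_E$, $BS(\cdot,\cdot)$, and the derived-tree order $o(T)$ are all defined by well-foundedness conditions, so that $T$ well-founded $\iff$ the associated operator lies in $\mathfrak{J}$. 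That the relevant map is Borel follows from the explicit coding of $\mathcal{L}$ in \cite{BFr-JFA}; that membership in $\mathfrak{J}$ is itself $\Pi_1^1$ (the upper bound) follows because each of these ideals is defined by quantifying ``for every normalized basic sequence / bounded sequence / weakly null sequence'' (a universal quantifier over a standard Borel set) followed by an arithmetic or Borel matrix condition, and separable refinement $(\mathfrak{J}_\xi)_{\xi<\omega_1}$ pins the complexity at $\Pi_1^1$ rather than higher. For the ideals that are $\mathcal{S}_\xi$-type classes one uses the tree $T_\mathfrak{S}$, $T_{\mathfrak{S}E}$ already introduced, whose order is exactly the invariant being thresholded.

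For Bossard genericity, I would follow the $\textbf{SB}$-template of Dodos and of \cite{ADo-Advances}, transplanted to $\mathcal{L}$. Suppose $\mathcal{A}\subset \mathcal{L}$ is analytic and contains an isomorphic copy of every member of $\mathcal{C}=\mathfrak{J}\cap\mathcal{L}$. The goal is to find $A\in\mathcal{A}$ through a restriction of which every $B\in\mathcal{L}$ factors --- equivalently, through a restriction of which $\mathrm{id}_{C(2^\nn)}$ factors. The key dichotomy: either some $A\in\mathcal{A}$ already has this universality property (and we are done), or not. In the ``not'' case one wants a contradiction with the boundedness of a $\Pi_1^1$-rank. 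The mechanism: for each $\xi<\omega_1$ the separable refinement gives a member $A_\xi\in\mathfrak{J}\setminus\mathfrak{J}_\xi$ (the $\xi$-th ``level'' witness, e.g.\ $I_{X_\xi}$, $I_{T_\xi}$, or a Schreier-type operator). Each $A_\xi$, being in $\mathcal{C}$, has an isomorphic copy $\widehat{A_\xi}\in\mathcal{A}$. If no $A\in\mathcal{A}$ were universal, then one can define on $\mathcal{A}$ a $\Pi_1^1$-rank $\varphi$ --- coming from the ordinal index associated to $\mathfrak{J}$ (e.g.\ the Szlenk index for $\mathfrak{D}$, the Bourgain $\ell_1^\xi$-index for $\mathfrak{S}E$, the $BS$-index for $\mathfrak{BS}^\xi$) --- such that $\varphi(\widehat{A_\xi})\geqslant \xi$ (roughly) for cofinally many $\xi$. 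By the boundedness theorem for $\Pi_1^1$-ranks on analytic sets, $\sup_{A\in\mathcal{A}}\varphi(A)<\omega_1$, which contradicts the cofinality. This forces some $A\in\mathcal{A}$ to lie outside $\mathfrak{J}_\xi$ for all $\xi$, hence (by properties of the refinement and an amalgamation / spreading-model absorption argument as in Section 3) to be universal for $\mathcal{L}$.

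A few ideals need the right choice of index and the right ``level witnesses.'' For $\mathfrak{W}$, $\mathfrak{D}$ one uses the classical reflexivity index / Szlenk index and the Schreier or James-tree spaces. For $\mathfrak{S}$ and $\mathfrak{S}E$ one uses the Bourgain $\ell_1^\xi$-spreading-model index together with the fact that $\mathfrak{S}$ generic implies $\mathfrak{S}E$ generic (as remarked in Section 2, since $\mathfrak{S}\subset\mathfrak{S}E$ and any copy of $E$ carries a copy of the universal behavior); the $\mathcal{S}_\xi$-variants reduce to $\mathcal{S}_1$-$\mathfrak{S}$ which one handles directly. For $\mathfrak{BS}^\xi=\mathfrak{W}\cap\mathcal{S}_\xi$-$\mathfrak{Sl}_1$ and $\mathcal{S}_\xi$-$\mathfrak{W}$ one combines the weak-compactness argument with the $\mathcal{S}_\xi$-machinery; for $\mathfrak{wBS}^\xi$ one restricts attention to weakly null sequences throughout. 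The common engine is: a ``selection + amalgamation'' step producing, from an analytic set not containing a universal operator, a cofinal family of level-witnesses of unbounded index, contradicting $\Pi_1^1$-boundedness.

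\textbf{Main obstacle.} The hard part will be the amalgamation step: showing that an operator $A\in\mathcal{A}$ lying outside $\mathfrak{J}_\xi$ for \emph{every} $\xi<\omega_1$ is genuinely universal, i.e.\ that $\mathrm{id}_{C(2^\nn)}$ factors through a restriction of $A$. Merely being ``high index at every level'' does not obviously yield a single operator through which everything factors; one must glue the restrictions witnessing membership in $\complement\mathfrak{J}_\xi$ into one $C(2^\nn)$-factorization, which requires a Pe\l czy\'nski-style universality / pushout argument inside $\mathcal{L}$ compatible with the Borel coding --- together with verifying that the relevant ranks really are $\Pi_1^1$-ranks on $\mathcal{A}$ (not just on $\mathcal{C}$), which is where the analyticity of $\mathcal{A}$ and the Borelness of the factorization relation on $\mathcal{L}$ (to be established in the preceding subsections) are essential.
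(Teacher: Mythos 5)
There is a genuine gap in the genericity half, and it is exactly the one you flag as ``the main obstacle'': your argument does not close. The issue is that you choose the wrong rank. You propose running the boundedness argument with an \emph{ideal-specific} ordinal index (Szlenk index for $\mathfrak{D}$, Bourgain index for $\mathfrak{S}E$, $BS$-index, etc.), i.e.\ a $\Pi^1_1$-rank on $\mathfrak{J}$ itself. But the set $\mathcal{A}$ is only assumed analytic and need not be a subset of $\mathfrak{J}$, so boundedness of that rank on $\mathcal{A}$ is not available; and even if you restrict to analytic pieces of $\mathcal{A}\cap\mathfrak{J}$, the contradiction you would extract is merely that some $A\in\mathcal{A}$ lies outside $\mathfrak{J}_\xi$ for every $\xi$, i.e.\ $A\notin\mathfrak{J}$. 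That is not universality. You then hope to upgrade ``$A\notin\mathfrak{J}$'' to ``$id_{C(2^\nn)}$ factors through a restriction of $A$'' via an amalgamation/pushout step, and you correctly observe this is hard; in fact for most of the ideals on the list it is simply false that every operator in the complement is universal, so no amalgamation argument can save it.

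The paper avoids the amalgamation problem altogether by using a \emph{universality rank} rather than an ideal rank: it works with $\mathbf{NP}(\cdot\,, id_{C(2^\nn)})$, a $\Pi^1_1$-rank on the $\Pi^1_1$ set $\mathfrak{NP}_{id_{C(2^\nn)}}$ of operators through a restriction of which $id_{C(2^\nn)}$ does \emph{not} factor (Proposition~\ref{rank}). If no member of the isomorphic saturation $\mathcal{A}_\cong$ is universal, then $\mathcal{A}_\cong\subset\mathfrak{NP}_{id_{C(2^\nn)}}$, boundedness gives $\zeta<\omega_1$ dominating $\mathbf{NP}$ on $\mathcal{A}_\cong$, and one then exhibits an operator simultaneously inside the ideal and with $\mathbf{NP}$-rank $>\zeta$. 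This is where the James-tree inclusions $I^{1,2}_T:\mathfrak{X}^T_1((e_i))\to\mathfrak{X}^T_2((e_i))$ come in: Proposition~\ref{yeah okay} shows they are completely continuous, weakly compact and $\mathcal{S}_1$-strictly singular, while the chains $(e_s)_{\varnothing\prec s\preceq t}$ embed $T$ into $T_{\mathfrak{S}C(2^\nn)}(I^{1,2}_T,\dots)$, forcing $\mathbf{NP}(I^{1,2}_T,id_{C(2^\nn)})\geqslant o(T)$. Finally, the paper uses a unification you miss: rather than eight separate arguments, it proves Bossard genericity of the single sub-ideal $\mathfrak{J}:=\mathfrak{W}\cap\mathfrak{V}\cap\mathcal{S}_1$-$\mathfrak{S}$, which is contained in every class in the list (and in $\mathfrak{V}$, $\mathfrak{DP}$ as well), so genericity is inherited upward automatically. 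Your ``level witness'' intuition is sound, but the witnesses must be chosen to have large $\mathbf{NP}$-rank against $id_{C(2^\nn)}$, not merely large ideal index.
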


\begin{theorem}
The completely continuous $\mathfrak{V}$ and Dunford-Pettis $\mathfrak{DP}$ operator ideals are $\Pi^1_2$-hard and Bossard generic.
\label{it's complicated}
\end{theorem}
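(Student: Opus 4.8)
The plan is to prove the two assertions independently. Bossard genericity will be obtained by transporting its spatial counterpart (Theorem~\ref{spatial}) up along the canonical Borel map $X\mapsto I_X$ from $\textbf{SB}$ into $\mathcal{L}$ (cf.~\cite{BFr-JFA}), using that $I_X\in\mathfrak{V}$ if and only if $X$ has the Schur property and $I_X\in\mathfrak{DP}$ if and only if $X$ has the Dunford--Pettis property. The $\Pi^1_2$-hardness is the heart of the matter and is of a genuinely different character from the $\Pi^1_1$-completeness results of Theorem~\ref{lots of stuff}: deciding membership in $\mathfrak{V}$ requires a second-order quantifier over \emph{weakly null} sequences, and weak nullity of a sequence is a coanalytic, not in general Borel, condition (essentially because the dual of a separable space need not be separable), so the ideal sits properly above the coanalytic level. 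The upper bound $\mathfrak{V},\mathfrak{DP}\in\Pi^1_2$ is routine, each complement being an existential quantifier over a coanalytically defined class of sequences; I shall only need $\Pi^1_2$-\emph{hardness}.

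For genericity, let $\mathcal{A}\subseteq\mathcal{L}$ be analytic and assume $\mathcal{A}$ contains an isomorphic copy of every member of $\mathfrak{V}\cap\mathcal{L}$. Put $\mathcal{A}^{\flat}=\{X\in\textbf{SB}: I_X \text{ factors through a restriction of some } A\in\mathcal{A}\}$. The relation ``$I_X$ factors through a restriction of $A$'' is analytic in $(X,A)$, so $\mathcal{A}^{\flat}$ is analytic. If $X$ has the Schur property then $I_X\in\mathfrak{V}\cap\mathcal{L}$, hence $\mathcal{A}$ contains some $B\cong I_X$, and composing the isomorphisms witnessing $B\cong I_X$ displays $I_X$ as a factor of $B$; thus $X\in\mathcal{A}^{\flat}$. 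So $\mathcal{A}^{\flat}$ is an analytic subset of $\textbf{SB}$ containing every Schur space, and Theorem~\ref{spatial}(ii) yields $Y_0\in\mathcal{A}^{\flat}$ which contains an isomorphic copy of every separable Banach space; in particular $C(2^{\mathbb{N}})$ embeds into $Y_0$. By the remark in Section~2 that $I_X$ factors through a restriction of $I_Z$ precisely when $X$ embeds into $Z$, $I_{C(2^{\mathbb{N}})}$ factors through a restriction of $I_{Y_0}$, while by the definition of $\mathcal{A}^{\flat}$ the operator $I_{Y_0}$ factors through a restriction of some $A_0\in\mathcal{A}$. Since ``factors through a restriction of'' is transitive, $I_{C(2^{\mathbb{N}})}$ factors through a restriction of $A_0$; as every operator in $\mathcal{L}$ factors through a restriction of $I_{C(2^{\mathbb{N}})}$, the same holds for every $B\in\mathcal{L}$, which is exactly the required conclusion. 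The Dunford--Pettis case is identical, invoking Theorem~\ref{spatial}(i) and $I_X\in\mathfrak{DP}\iff X\in\textbf{DP}$.

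For $\Pi^1_2$-hardness it suffices, again by the Borel transport $X\mapsto I_X$ together with $I_X\in\mathfrak{V}\iff X$ Schur, to show that the class of separable Schur spaces is $\Pi^1_2$-hard in $\textbf{SB}$; for this I would construct a Borel reduction of the $\Pi^1_2$-complete set $\{T:\forall\alpha\in\mathbb{N}^{\mathbb{N}}\ T_\alpha\ \text{is ill-founded}\}$, where $T$ ranges over trees on $\mathbb{N}\times\mathbb{N}$ and $T_\alpha=\{s:(\alpha|_{|s|},s)\in T\}$. Given $T$, one builds, Borel-uniformly, a Banach space $X_T$ with a basis indexed by the nodes of $T$ and a norm of mixed-Tsirelson / repeated-averages type (in the spirit of the spaces $T_\xi$ recalled above), whose ``scale'' along each section $T_\alpha$ is calibrated by the ordinal ranks of the subtrees of $T_\alpha$, so that: if $T_\alpha$ is well-founded, the repeated-averages sequence attached to the branch direction $\alpha$ is a normalized weakly null sequence in $X_T$ (bounded rank forces the iterated averages to shrink in norm and to be annihilated by $X_T^{*}$); if $T_\alpha$ is ill-founded, that sequence is equivalent to the $\ell_1$-basis and so not weakly null; and the norm is arranged to be $\ell_1$-like across distinct branch directions, so that these are the only candidates for a normalized weakly null sequence. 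Then $X_T$ is Schur if and only if every section $T_\alpha$ is ill-founded, so $T\mapsto I_{X_T}$ is the sought reduction. For $\mathfrak{DP}$ one runs a parallel construction, tuned so that in the non-Schur case $X_T$ has a complemented infinite-dimensional reflexive subspace --- which destroys the Dunford--Pettis property, the latter passing to complemented subspaces --- while the all-ill-founded case remains Schur and hence has the Dunford--Pettis property.

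I expect the main obstacle to be precisely this tree-space construction: one needs a single separable space whose normalized weakly null sequences stand in a faithful, rigid correspondence with the well-founded sections of a tree on $\mathbb{N}\times\mathbb{N}$, and in particular one must rule out ``diagonal'' weakly null sequences spread across infinitely many branch directions --- it is exactly this continuum-indexed behaviour, impossible to realise by a countable amalgamation, that forces the complexity from $\Pi^1_1$ up to $\Pi^1_2$. Getting the norm right so that both the positive direction (well-founded section $\Rightarrow$ a genuine weakly null sequence, via an ordinal-rank/Mazur argument) and the rigidity direction (every normalized weakly null sequence localizes to a single well-founded section) hold at once is where the delicate estimates lie; the $\mathfrak{DP}$ variant, additionally requiring the reflexive part to be complemented, will need the most care. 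The remaining ingredients --- Borel measurability of the assignments, the $\Pi^1_2$ upper bound, and the verifications in the genericity argument --- are standard.
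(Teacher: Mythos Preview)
Your genericity argument is correct and complete. It is not the paper's primary proof, however: the paper establishes genericity of $\mathfrak{V}$ and $\mathfrak{DP}$ simultaneously with all the classes in Theorem~\ref{lots of stuff} by showing directly that the much smaller class $\mathfrak{J}=\mathfrak{W}\cap\mathfrak{V}\cap\mathcal{S}_1\text{-}\mathfrak{S}$ is Bossard generic, via the James-type tree spaces $\mathfrak{X}^T_p$ and the operators $I^{1,2}_T$ of Proposition~\ref{yeah okay}, together with the rank $\textbf{NP}(\,\cdot\,,id_{C(2^{\mathbb{N}})})$. Your route --- pulling the problem down to $\textbf{SB}$ along $X\mapsto I_X$ and invoking Theorem~\ref{spatial} --- is exactly the mechanism of Proposition~\ref{weak sauce}, which the paper records as an alternative proof for the non-proper ideals. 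Your formulation via $\mathcal{A}^\flat$ is a minor variant of that proposition (you take spaces whose identity factors through a restriction of something in $\mathcal{A}$, rather than intersecting $\mathcal{A}_{\cong}$ with $\textbf{Sp}$); both are fine. The paper's direct argument has the advantage of handling the proper ideals ($\mathfrak{S}$, $\mathcal{S}_\xi\text{-}\mathfrak{S}$, etc.) at the same stroke, where your spatial transport cannot apply; your argument has the advantage of being shorter for $\mathfrak{V}$ and $\mathfrak{DP}$ specifically.

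For $\Pi^1_2$-hardness you have made the reduction the paper makes --- Lemma~\ref{too hard} says precisely that $\Gamma$-completeness of $\text{Space}(\mathfrak{J})$ forces $\Gamma$-hardness of $\mathfrak{J}$ --- but you then propose to \emph{reprove} that the Schur and Dunford--Pettis classes are $\Pi^1_2$-hard in $\textbf{SB}$ by an ad hoc tree-space construction. This is unnecessary: that result is Theorem~\ref{Kurka} (Kurka), already stated in the paper, and the paper simply cites it. Your sketch is plausible in outline but, as you yourself say, the rigidity direction (ruling out ``diagonal'' weakly null sequences and, for $\mathfrak{DP}$, arranging a complemented reflexive piece in the bad case) is genuinely delicate --- Kurka's argument is a nontrivial paper on its own. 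So the only gap in your proposal is self-inflicted: replace the last two paragraphs by ``apply Theorem~\ref{Kurka} and Lemma~\ref{too hard}'' and you are done.
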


{\color{red} 

}


We also prove the following result concerning strongly bounded classes of operators.

\begin{theorem}
The weakly compact $\mathfrak{W}$ and Asplund operators $\mathfrak{D}$ in $\mathcal{L}$ are strongly bounded classes. 
\label{strongly bounded}
\end{theorem}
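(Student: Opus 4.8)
\textbf{Proof plan for Theorem \ref{strongly bounded}.}

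The plan is to mimic the proof that $\textbf{REFL}$ and $\textbf{SD}$ are strongly bounded in $\textbf{SB}$, transporting those amalgamation arguments to the standard Borel space $\mathcal{L}$ via the appropriate ordinal indices. Recall that a class $\mathcal{C}\subset\mathcal{L}$ is strongly bounded if every analytic $\mathcal{A}\subset\mathcal{C}$ is ``bounded'': there is a single $B\in\mathcal{C}$ through a restriction of which every $A\in\mathcal{A}$ factors. The natural boundedness mechanism is an ordinal index: for $\mathfrak{W}$ one uses the Szlenk-type / weak-compactness index (equivalently the Bourgain $\ell_1$-index of the relevant tree on the ball), and for $\mathfrak{D}$ one uses the Szlenk index itself. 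The two key ingredients are (a) a \emph{boundedness} statement: if $\mathcal{A}\subset\mathfrak{W}\cap\mathcal{L}$ (resp. $\mathfrak{D}\cap\mathcal{L}$) is analytic, then $\sup_{A\in\mathcal{A}}$ of the relevant index is a countable ordinal $\xi_0$; and (b) an \emph{amalgamation/universality} statement: for each countable $\xi$ there is a single operator $B_\xi\in\mathfrak{W}\cap\mathcal{L}$ (resp. $\mathfrak{D}\cap\mathcal{L}$) with index $\le\xi$ (so $B_\xi$ stays in the class) through a restriction of which every $A\in\mathcal{L}$ with index $\le\xi$ factors.

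First I would establish (a). Since factoring through a restriction and membership in $\mathfrak{W}$ (resp. $\mathfrak{D}$) are Borel-coded conditions on $\mathcal{L}$, and the relevant index is a $\Pi^1_1$-rank on the coanalytic set $\mathfrak{W}\cap\mathcal{L}$ (resp. $\mathfrak{D}\cap\mathcal{L}$) — this is exactly the content underlying the $\Pi^1_1$-completeness assertions of Theorem \ref{lots of stuff}(1),(2) — the boundedness theorem for $\Pi^1_1$-ranks applies: any analytic subset of the coanalytic set has ranks bounded below $\omega_1$. This gives the countable ordinal $\xi_0$.

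Next I would establish (b), which I expect to be the main obstacle. For $\mathfrak{D}$ one should be able to quote Brooker \cite{Brooker-PreprintAbsolute}: his operator in $\mathfrak{D}_{\xi+1}$ that is $\complement\mathfrak{D}_\xi$-universal in fact receives, via a restriction, every Asplund operator of Szlenk index $\le\omega^\xi$; combined with (a) (which bounds the Szlenk index of members of $\mathcal{A}$ by some $\omega^{\xi_0}$, enlarging $\xi_0$ if necessary) and the fact $\mathfrak{D}_{\xi_0+1}\subset\mathfrak{D}$, this $B=B_{\xi_0}$ lies in $\mathfrak{D}\cap\mathcal{L}$ and witnesses strong boundedness. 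For $\mathfrak{W}$ one wants the analogue: an operator of bounded weak-compactness index through a restriction of which every weakly compact operator of index $\le\xi_0$ factors. Here I would build $B_{\xi_0}$ by a DFJP-style interpolation performed over an amalgamated ``tree space'': take the Bourgain-Dodos amalgamation of the analytic family of domains/ranges occurring in $\mathcal{A}$ (using that the set of suitable codes is analytic, so one can select a Borel parametrization and amalgamate along a Schreier-type tree as in \cite{Do-TAMS, Bos-FunD}), then apply the $\mathfrak{W}$ space factorization property (Davis–Figiel–Johnson–Pełczyński \cite{DFJP-JFA}) to keep the resulting operator weakly compact, while checking that the interpolation does not raise the index past a countable bound. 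The delicate points are that the amalgamation must be done \emph{for operators} (amalgamating both the source and target sides coherently, so that each original $A$ factors through a \emph{restriction}, not merely through $B$), and that one must verify the index estimate survives the DFJP construction — this is where I would spend most of the effort, likely by tracking the $\ell_1^\xi$-spreading-model / $\mathcal{S}_\xi$-weakly-compact reformulation from \cite{BC-Scand} rather than the raw index.

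Finally I would assemble: given analytic $\mathcal{A}\subset\mathfrak{W}\cap\mathcal{L}$ (resp. $\mathfrak{D}\cap\mathcal{L}$), (a) yields $\xi_0<\omega_1$ bounding the indices, (b) yields $B_{\xi_0}\in\mathfrak{W}\cap\mathcal{L}$ (resp. $\mathfrak{D}\cap\mathcal{L}$) through a restriction of which every $A\in\mathcal{A}$ factors; since $B_{\xi_0}$ lies in the class, this is exactly strong boundedness. A remark worth including: unlike the purely spatial case, one cannot take $B_{\xi_0}$ to be the restriction of $id_{C(2^\nn)}$ (that operator is not weakly compact and not Asplund), which is precisely why a genuine amalgamation-plus-interpolation argument, rather than a soft universality argument, is required.
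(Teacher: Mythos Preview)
Your step (a) is correct and matches the paper exactly: one uses a $\Pi_1^1$-rank on the coanalytic class (the weak-compactness rank $r_{\mathfrak{W}}$ from \cite{C-Fund} for $\mathfrak{W}$, the Szlenk index for $\mathfrak{D}$) together with the boundedness theorem to get a countable bound $\xi_0$ on the indices of members of $\mathcal{A}$.

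Your step (b), however, contains a genuine gap and misses the key simplification the paper uses. First, for $\mathfrak{D}$: Brooker's $\complement\mathfrak{D}_\xi$-universal operator $U$ is one that \emph{factors through} every member of $\complement\mathfrak{D}_\xi$; it does \emph{not} have the property that every member of $\mathfrak{D}_\xi$ factors through a restriction of $U$. These are opposite directions of the factoring relation, so that result cannot be quoted here. Second, and more importantly, the operator-level amalgamation you sketch for $\mathfrak{W}$ is unnecessary. The paper instead reduces to the spatial case via the \emph{quantitative space factorization property}: every operator of weak-compactness index at most $\omega^{\omega^\xi}$ factors through $id_Z$ for some $Z\in\text{Space}(\mathscr{J}_{\omega^{\omega^{\xi+1}}})$ \cite{BC-Scand}, and analogously every $A\in\mathfrak{D}_\xi$ factors through $id_Z$ for some $Z\in\text{Space}(\mathfrak{D}_{\xi+1})$ (this is Brooker's \emph{other} result in \cite{Brooker-PreprintAbsolute}, not the universal-operator one). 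The set of such $Z$ is a Borel subset of $\textbf{REFL}$ (resp.\ $\textbf{SD}$), so one simply invokes the known strong boundedness of $\textbf{REFL}$ (resp.\ $\textbf{SD}$) in $\textbf{SB}$ to get a single reflexive (resp.\ separable-dual) space $W$ containing isomorphs of all of them. Every $A\in\mathcal{A}$ then factors through a restriction of $id_W$, and $id_W\in\mathfrak{W}$ (resp.\ $\mathfrak{D}$) since $W$ is reflexive (resp.\ has separable dual).

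Your concluding remark is therefore the opposite of what happens: the witnessing operator \emph{is} the identity on a well-chosen space $W$, and the whole proof is a reduction to the spatial strong-boundedness results rather than a new operator-level amalgamation.
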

\begin{corollary}
The class $\mathfrak{W}^{super}$ and the classes $\mathfrak{D}_\xi$ for $\xi <\omega_1$ in $\mathcal{L}$ are not generic.
\label{not generic}
\end{corollary}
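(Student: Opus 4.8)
\textbf{Proof plan for Corollary \ref{not generic}.}

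The plan is to derive this corollary directly from Theorem \ref{strongly bounded} together with the complexity observations already recorded in the excerpt. The key principle — exactly analogous to the spatial fact that a Bossard generic class is Bourgain generic, and that strong boundedness contradicts Bourgain genericity for a class that is not already all of $\mathcal{L}$ — is the following: if $\mathcal{C}\subset\mathcal{L}$ is strongly bounded and $\mathcal{C}\neq\mathcal{L}$ (more precisely, if $id_{C(2^\nn)}\notin\mathcal{C}$), then $\mathcal{C}$ is not Bourgain generic, hence not generic. Indeed, if $\mathcal{C}$ were generic, then since $\mathcal{C}$ is trivially analytic in itself (or one applies strong boundedness to the analytic set $\mathcal{A}=\mathcal{C}$), strong boundedness produces $B\in\mathcal{C}$ through a restriction of which every $A\in\mathcal{C}$ factors; genericity of $\mathcal{C}$ then forces every operator in $\mathcal{L}$, in particular $id_{C(2^\nn)}$, to factor through a restriction of $B$. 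But then $B$ is universal in $\mathcal{C}$ in the strongest sense, and one checks this is incompatible with $\mathcal{C}$ being a proper closed ideal (or proper subclass) — concretely, $\mathfrak{W}^{super}$ and $\mathfrak{D}_\xi$ each fail to contain $id_{C(2^\nn)}$, and since each has the ideal property, if $id_{C(2^\nn)}$ factored through a restriction of some $B$ in the class, the ideal property would place a restriction of $id_{C(2^\nn)}$, hence (by Remark~1 of the excerpt, since $C(2^\nn)$ contains every separable space) an isomorphic copy of every separable space, inside $\mathrm{Space}$ of the class, contradicting properness / the Szlenk bound.

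Concretely, the steps I would carry out are: (1) Observe $\mathfrak{W}^{super}\subset\mathfrak{W}$ and $\mathfrak{D}_\xi\subset\mathfrak{D}$, and that both $\mathfrak{W}$ and $\mathfrak{D}$ are strongly bounded in $\mathcal{L}$ by Theorem \ref{strongly bounded}; one should note strong boundedness of a class does not automatically pass to subclasses, so instead I would apply Theorem \ref{strongly bounded} to the ambient class and argue within it. (2) Suppose toward a contradiction that $\mathfrak{W}^{super}\cap\mathcal{L}$ is generic. Apply strong boundedness of $\mathfrak{W}\cap\mathcal{L}$ to the analytic set $\mathcal{A}=\mathfrak{W}^{super}\cap\mathcal{L}$ (it is analytic — indeed Borel — being a standard Borel subset; I would cite the complexity computations from earlier in the section, or simply note $\mathcal{A}\subset\mathfrak{W}\cap\mathcal{L}$ is analytic as required) to obtain $B\in\mathfrak{W}\cap\mathcal{L}$ such that every $A\in\mathfrak{W}^{super}\cap\mathcal{L}$ factors through a restriction of $B$. (3) Since $\mathfrak{W}^{super}$ contains all finite rank operators and the finite summing operators $s_n$ uniformly factor through every element of $\complement\mathfrak{W}^{super}$ but not through weakly compact operators, the hypothesis of genericity applied to $B$ — every operator in $\mathfrak{W}^{super}\cap\mathcal{L}$ factors through a restriction of $B$ — would force $id_{C(2^\nn)}$ to factor through a restriction of $B$. (4) Derive the contradiction: $B\in\mathfrak{W}$ has the ideal property, so any operator factoring through a restriction of $B$ lies in $\mathfrak{W}$; but $id_{C(2^\nn)}\notin\mathfrak{W}$ since $C(2^\nn)$ is nonreflexive. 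The identical argument with $\mathfrak{D}$ in place of $\mathfrak{W}$ and $\mathfrak{D}_\xi$ in place of $\mathfrak{W}^{super}$, using that $id_{C(2^\nn)}\notin\mathfrak{D}_\xi$ because the Szlenk index of $C(2^\nn)$ exceeds $\omega^\xi$ for every $\xi<\omega_1$ (indeed $C(2^\nn)$ fails to be Asplund at all), handles the $\mathfrak{D}_\xi$ case.

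The one point requiring care — and the main obstacle — is step (3): verifying that genericity of $\mathfrak{W}^{super}\cap\mathcal{L}$ (respectively $\mathfrak{D}_\xi\cap\mathcal{L}$) really does apply to the operator $B$ produced by strong boundedness. The definition of generic (Bourgain generic, Section 2) says: if $U\in\mathcal{L}$ is such that every $A\in\mathfrak{J}\cap\mathcal{L}$ factors through a restriction of $U$, then $id_{C(2^\nn)}$ factors through a restriction of $U$. Strong boundedness of the ambient ideal, applied to $\mathcal{A}=\mathfrak{J}\cap\mathcal{L}$ itself, gives precisely such a $U=B$ (with the bonus that $B$ lies in the ambient ideal, which is what yields the contradiction). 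So the logical skeleton is: strong boundedness of $\mathfrak{W}$ $\Rightarrow$ existence of $B\in\mathfrak{W}\cap\mathcal{L}$ universal-by-restriction for $\mathfrak{W}^{super}\cap\mathcal{L}$ $\Rightarrow$ (if $\mathfrak{W}^{super}$ were generic) $id_{C(2^\nn)}$ factors through a restriction of $B\in\mathfrak{W}$ $\Rightarrow$ $id_{C(2^\nn)}\in\mathfrak{W}$, absurd. I would write this out cleanly, being explicit that $\mathfrak{W}^{super}\cap\mathcal{L}$ and $\mathfrak{D}_\xi\cap\mathcal{L}$ are analytic subsets of $\mathfrak{W}\cap\mathcal{L}$ and $\mathfrak{D}\cap\mathcal{L}$ respectively (for $\mathfrak{D}_\xi$ this is the standard fact that operators of Szlenk index at most $\omega^\xi$ form a Borel, indeed $\Pi_1^1$-bounded, set; for $\mathfrak{W}^{super}$ one notes it is Borel as well), so that Theorem \ref{strongly bounded} applies.
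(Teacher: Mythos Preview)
Your proposal is correct and follows exactly the paper's approach: apply strong boundedness of $\mathfrak{W}$ (resp.\ $\mathfrak{D}$) from Theorem~\ref{strongly bounded} to the Borel subset $\mathfrak{W}^{super}\cap\mathcal{L}$ (resp.\ $\mathfrak{D}_\xi\cap\mathcal{L}$) to produce $B$ in the ambient ideal through which the whole subclass factors by restriction, and then observe that genericity would force $id_{C(2^\nn)}$ to factor through a restriction of $B$, contradicting $B\in\mathfrak{W}$ (resp.\ $B\in\mathfrak{D}$). The paper's proof is in fact terser than yours---it simply cites that these subclasses are Borel and invokes Theorem~\ref{strongly bounded}---so your write-up makes the implicit logic explicit.
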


\begin{remark} In \cite{C-Fund}, a rank $\varrho$ was defined such that for an operator $A:X\to Y$, $\varrho(A)$ is an ordinal if and only if $A$ is weakly compact, $\varrho(A)\leqslant \omega$ if and only if $A$ is super weakly compact, and if $X$ is separable, $\varrho(A)<\omega_1$ if and only if $A$ is weakly compact.  The fact that $\mathfrak{W}^{super}$ is not generic which is stated in Corollary \ref{not generic} can be replaced by the more general fact that for each $\xi<\omega_1$, $\mathfrak{W}^\xi:=\{A: \varrho(A) \leqslant \xi\}$ is not generic. The proof is the same as for $\mathfrak{W}^{super}$.

\end{remark}

We will assume the previously stated results and prove the Corollary \ref{not generic}.

\begin{proof}
In \cite{BCFrWa-JFA} it is shown that $\mathfrak{W}^{super}\cap \mathcal{L}$ and $\mathfrak{D}_\xi\cap \mathcal{L}$ for $\xi <\omega_1$ are Borel subsets of $\mathfrak{W}$ and $\mathfrak{D}$, respectively. Therefore the conclusion follows from the fact that by Theorem \ref{strongly bounded}, these ideals are strongly bounded.
\end{proof}

\begin{remark}
In the authors' opinion the most interesting unsolved problem is whether $\mathfrak{S}$ is a strongly bounded. Since $\mathfrak{S}$ is a proper ideal, it has no (non-trivial) spatial analogue in $\textbf{SB}$. Consequently, any proof of this statement will probably require adapting, in a non-trivial way, many of the deep ideas from the work of Argyros and Dodos \cite{ADo-Advances} to a purely operator setting. We note that in our proof that $\mathfrak{W}$ and $\mathfrak{D}$ are strongly bounded, we use that $\textbf{REFL}$ and $\textbf{SD}$ are strongly bounded. 
\end{remark}

\subsection{Basic Descriptive Set Theory and the Definition of $\mathcal{L}$}

In this subsection we recall the definitions and some basic notions in Descriptive Set Theory (see \cite{Do-Book,Ke-book} for a detailed account). First recall that a Polish space is a separable, metrizable, topological space and a space $S$ endowed with a $\sigma$-algebra $\Sigma$ is called an standard Borel space (or simply standard space) if $\Sigma$ coincides with the $\sigma$-algebra of a Polish topology on $S$. The first standard space we consider is $\textbf{SB}$.
Recall that $\textbf{SB}$ denotes the set of closed subsets of $C(2^\nn)$ which are linear subspaces, endowed with the Effros-Borel structure.  We also recall the existence of a sequence $(d_n)_{n=1}^\infty$ of Borel functions from $\textbf{SB}$ into $C(2^\nn)$ such that for each $X\in \textbf{SB}$, $\{d_n(X): n\in \nn\}$ is a dense subset of $X$. We are only concerned with the Borel $\sigma$-algebra on $\textbf{SB}$, rather than the topology which generates it, so we will  often leave the topology unspecified or choose a specific topology as suits our needs. 

The following coding of $\mathcal{L}$ was defined in \cite{BFr-JFA} and further studied in \cite{BC-Scand,BCFrWa-JFA}. We let $\mathcal{L}$ denote the set of all triples $(X,Y, (y_n)_{n=1}^\infty)\in \textbf{SB}\times \textbf{SB}\times C(2^\nn)^\nn$ such that $y_n\in Y$ for all $n\in \nn$ and there exists $k\in \nn$ such that for every $n\in \nn$ and every collection $(q_i)_{i=1}^n$ of rational scalars, $$\|\sum_{i=1}^n q_i y_i\|\leqslant k \|\sum_{i=1}^n q_i d_i(X)\|.$$   The set $\mathcal{L}$ is a standard Borel subset of the Polish $\textbf{SB}\times \textbf{SB}\times C(2^\nn)^\nn$ endowed with the product Borel $\sigma$-algebra. Again, as it suits us, we may specify a topology on $\textbf{SB}$, after which we will assume $\mathcal{L}$ is endowed with the corresponding product topology. Note that $\mathcal{L}$ is a coding of all separable operators between Banach spaces (see \cite{BCFrWa-JFA} for a detailed treatment). Here we simply point out that, if $X,Y\in \textbf{SB}$ and $A:X\to Y$ is a bounded linear operator, $(X,Y, (Ad_n(X))_{n=1}^\infty)\in \mathcal{L}$, and if $(X,Y, (y_n)_{n=1}^\infty)\in \mathcal{L}$, the function $f:\{d_n(X):n\in \nn\}\to Y$ given by $f(d_n(X))=y_n$ extends uniquely to a continuous, linear operator from $X$ into $Y$.



For convenience, we often write $(X, Y, A)$ instead of $(X, Y, (y_n)_{n=1}^\infty)$. For $X \in \textbf{SB}$, $id_{X}$ will denote the identity operator on $X$. We isolate the following easy lemma which observes the connection between $\textbf{SB}$ and $\mathcal{L}$

\begin{lemma}
The map $\Psi:\textbf{\emph{SB}}\to \mathcal{L}$ by $\Psi(X)= (X,X,(d_n(X)))$ is a Borel isomorphism onto its range. In particular,  $\Psi(\textbf{\emph{SB}})=:\textbf{\emph{Sp}}$ is Borel in $\mathcal{L}$.
\label{cash money}
\end{lemma}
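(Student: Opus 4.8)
The plan is to show directly that $\Psi$ is injective, Borel, and has Borel range, and then invoke the standard fact (Lusin--Souslin) that an injective Borel image of a standard Borel space is Borel and the inverse is Borel as well. First I would verify measurability of $\Psi$: writing $\Psi(X) = (X, X, (d_n(X))_{n=1}^\infty)$, the first two coordinates are the identity map $\textbf{SB}\to\textbf{SB}$ (trivially Borel), and the third coordinate is $X\mapsto (d_n(X))_{n=1}^\infty$, which is Borel because each $d_n:\textbf{SB}\to C(2^\nn)$ is Borel by the choice of the Kuratowski--Ryll-Nardzewski selectors recalled just above the lemma. Hence $\Psi$ is Borel into $\textbf{SB}\times\textbf{SB}\times C(2^\nn)^\nn$, and its range lands in $\mathcal{L}$ since for each $X$ the triple $(X,X,(d_n(X)))$ plainly satisfies the defining inequality of $\mathcal{L}$ with constant $k=1$ (it codes $\mathrm{id}_X$).

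Next I would check injectivity. If $\Psi(X)=\Psi(X')$ then in particular $X=X'$ as elements of $\textbf{SB}$ (first coordinates agree), so $\Psi$ is injective; in fact it is injective for the trivial reason that the first coordinate already recovers $X$. With $\Psi$ an injective Borel map between standard Borel spaces, Lusin--Souslin gives at once that $\textbf{Sp}:=\Psi(\textbf{SB})$ is Borel in $\mathcal{L}$ (equivalently in the ambient Polish product) and that $\Psi:\textbf{SB}\to\textbf{Sp}$ is a Borel isomorphism. This is exactly the content of the lemma.

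Alternatively, and perhaps more in the spirit of giving an explicit argument, I could exhibit a Borel left inverse and a Borel description of the range without citing Lusin--Souslin directly. The range is
\[
\textbf{Sp} = \{(X,Y,(y_n)_{n=1}^\infty)\in\mathcal{L} : Y=X \text{ and } y_n = d_n(X) \text{ for all } n\in\nn\}.
\]
The condition $Y = X$ is Borel in $\textbf{SB}\times\textbf{SB}$ (the diagonal of a standard Borel space is Borel), and for each fixed $n$ the condition $y_n = d_n(X)$ is Borel as the preimage of the diagonal of $C(2^\nn)\times C(2^\nn)$ under the Borel map $(X,Y,(y_m))\mapsto (y_n, d_n(X))$; intersecting over $n\in\nn$ keeps it Borel. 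So $\textbf{Sp}$ is Borel in $\mathcal{L}$. The inverse map $\textbf{Sp}\to\textbf{SB}$, $(X,X,(d_n(X)))\mapsto X$, is just the restriction of the first-coordinate projection, hence Borel. Combined with the Borelness of $\Psi$ established above, this shows $\Psi$ is a Borel isomorphism onto $\textbf{Sp}$.

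I do not anticipate a serious obstacle here; the only mild point of care is making sure the third-coordinate map is genuinely Borel, which reduces entirely to the already-quoted Borelness of the selectors $(d_n)$, and making sure the range is cut out by a countable conjunction of Borel conditions rather than something more complex — which it is, precisely because the $d_n$ are fixed Borel functions. The statement is essentially a bookkeeping consequence of the definitions of $\textbf{SB}$ and $\mathcal{L}$.
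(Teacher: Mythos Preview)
Your argument is correct. The paper states this lemma without proof, treating it as an immediate consequence of the definitions; your write-up supplies exactly the standard details one would expect (Borelness of the coordinate maps via the selectors $d_n$, injectivity from the first coordinate, and either Lusin--Souslin or the explicit Borel description of the range), so there is nothing to compare against beyond noting that you have filled in what the authors left as routine.
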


A subset $A$ of a standard Borel space $S$ is \emph{analytic} (or $\Sigma_1^1$) if there is a standard Borel space $S'$, a Borel subset $B$ of $S'$, and a Borel map $f:S' \to S$ such  that $f(B)=A$ (i.e. $A$ is the Borel image of a Borel set). A subset of a standard space is \emph{coanalytic} (or $\Pi_1^1 $) if it is the complement of an analytic set.  Lusin's theorem states that a set that is both analytic and coanalytic is Borel. A subset of a Polish space is $\Sigma^1_2$ if is is the Borel image of a $\Pi_1^1$ set. Likewise a subset of a Polish space is $\Pi_2^1$ is it is the complement of a $\Sigma_2^1$ space. A detailed explanation of the projective hierarchy can by found in \cite{Ke-book}.

Let $\Gamma$ be any class of sets in Polish spaces (e.g. $\Pi_1^1$). If $X$ is a standard Borel space let $\Gamma(X)$ be the subsets of $X$ in $\Gamma$.  A subset $B$ of a standard Borel space $S$ is said to be $\Gamma$-hard if for any standard
Borel space $S'$ and any $A \in \Gamma(S')$, there is a Borel function $f:S' \to S$ such that $x \in A$ if and only if $f(x) \in B$. If, in addition,
$B$ is in $\Gamma(S)$, then $B$ is said to be $\Gamma$-complete. The next lemma relates the complexity of a class of operators to the corresponding spaces.

\begin{lemma} Let $\Gamma$ be any class of sets in Polish spaces and $\mathcal{A}\subset \mathcal{L}$. If $\text{\emph{Space}}(\mathcal{A})$ in $\textbf{\emph{SB}}$ is $\Gamma$-complete, then $\mathcal{A}$ is $\Gamma$-hard.  \label{too hard}
\end{lemma}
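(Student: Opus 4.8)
The plan is to reduce a $\Gamma$-hardness question for the operator space $\mathcal{L}$ to the already-assumed $\Gamma$-completeness of $\text{Space}(\mathcal{A})$ in $\textbf{SB}$, by exhibiting a Borel ``section'' of the Space operation. The natural candidate is the Borel injection $\Psi:\textbf{SB}\to \mathcal{L}$ from Lemma \ref{cash money}, which sends a space $X$ to its identity operator $id_X = (X,X,(d_n(X)))$. First I would observe that for every $X\in \textbf{SB}$ we have $\Psi(X)\in \mathcal{A}$ if and only if $id_X\in \mathcal{A}$, which by the definition of $\text{Space}(\mathcal{A})$ is exactly the statement $X\in \text{Space}(\mathcal{A})$. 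In other words, $\Psi^{-1}(\mathcal{A}) = \text{Space}(\mathcal{A})$ as subsets of $\textbf{SB}$.

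Next I would unwind the definition of $\Gamma$-hardness for $\text{Space}(\mathcal{A})$. Fix an arbitrary standard Borel space $S'$ and an arbitrary $A\in \Gamma(S')$. Since $\text{Space}(\mathcal{A})$ is $\Gamma$-complete, in particular $\Gamma$-hard, there is a Borel function $g:S'\to \textbf{SB}$ with $x\in A \iff g(x)\in \text{Space}(\mathcal{A})$. Composing with $\Psi$, set $f = \Psi\circ g : S'\to \mathcal{L}$. This is Borel, being a composition of Borel maps (here using that $\Psi$ is Borel, from Lemma \ref{cash money}). Then the chain of equivalences $x\in A \iff g(x)\in \text{Space}(\mathcal{A}) \iff \Psi(g(x))\in \mathcal{A} \iff f(x)\in \mathcal{A}$ shows that $f$ is a Borel reduction of $A$ to $\mathcal{A}$. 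Since $S'$ and $A\in\Gamma(S')$ were arbitrary, $\mathcal{A}$ is $\Gamma$-hard, which is the conclusion.

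I do not anticipate a genuine obstacle here; this is essentially a bookkeeping argument that factors through Lemma \ref{cash money}. The one point worth stating carefully is the equivalence $\Psi(X)\in\mathcal{A} \iff id_X \in \mathcal{A} \iff X\in\text{Space}(\mathcal{A})$: strictly speaking $\Psi(X)$ is the \emph{coding} $(X,X,(d_n(X)))$ of the identity operator, so one should note that the operator it codes is indeed $id_X$ (the function $d_n(X)\mapsto d_n(X)$ extends to the identity), and that membership of an element of $\mathcal{L}$ in a class $\mathcal{A}\subset\mathcal{L}$ is by definition membership of the coded operator in the class $\mathcal{A}$ of operators. Once this identification is in place, the proof is just the composition $f=\Psi\circ g$. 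I would also remark that the same argument shows more: if $\text{Space}(\mathcal{A})$ is merely $\Gamma$-hard (not necessarily complete), then $\mathcal{A}$ is $\Gamma$-hard, since completeness of $\text{Space}(\mathcal{A})$ is used only through its hardness.
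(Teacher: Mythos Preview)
Your proposal is correct and follows essentially the same approach as the paper: compose a Borel reduction to $\text{Space}(\mathcal{A})$ with the Borel map $\Psi$ of Lemma~\ref{cash money} to obtain a Borel reduction to $\mathcal{A}$. Your added remark that only $\Gamma$-hardness (not completeness) of $\text{Space}(\mathcal{A})$ is actually used is a valid observation the paper does not make explicit.
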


\begin{proof}
Suppose that $\text{Space}(\mathcal{A})$ is $\Gamma$-complete. Then, by definition, for any standard Borel space $S$ and subset $A$ that is in $\Gamma$ there is a Borel map $f:S \to \textbf{SB}$ such that $x \in A$ if and only if $f(x) \in \text{Space}(\mathcal{A})$. By Proposition \ref{cash money}, the map $g:\textbf{SB} \to \mathcal{L}$ defined by $g(X) = (X,X, (d_n(X))_{n=1}^\infty)$ is Borel. The map $g \circ f : S \to \mathcal{L}$ is the desired reduction of $A$ to $\mathcal{A}$. This proves the claim.   
\end{proof}

 For the definition of $\Pi_1^1$-rank we refer the reader to \cite{Do-Book}. For our purposes we need the following facts. Recall that $Tr$ denotes the subset of $2^{2^{<\nn}}$ consisting of those subsets $T$ of $2^{<\nn}$ which are trees (that is, which contain all initial segments of their members), and $WF$ denotes the subset of $Tr$ consisting of those trees which are well-founded.

\begin{fact}
If $P$ is a Polish space and $C \subset P$, $f:P \to Tr$, is Borel, and $f^{-1}(WF)=C$,  then $C$ is $\Pi_1^1$ and $\phi(x)=o(f(x))$ defines a $\Pi_1^1$ rank on $A$.\end{fact}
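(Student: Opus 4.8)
The statement to prove is the "Fact" giving a $\Pi_1^1$-rank from a Borel map into $Tr$ whose preimage of $WF$ is the set $C$. This is a standard boundedness/rank transfer result in descriptive set theory, so the plan is to reduce it to the known fact that the ordinal-height function $T \mapsto o(T)$ is a $\Pi_1^1$-rank on $WF \subseteq Tr$, and then pull this rank back along the given Borel map $f$.

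\begin{proof}[Proof proposal]
The plan is to transfer the canonical $\Pi_1^1$-rank on $WF$ to $C$ along $f$. First I would recall (or cite, from \cite{Do-Book} or \cite{Ke-book}) the foundational fact that $WF$ is a $\Pi_1^1$ subset of the Polish space $Tr$ and that the map $T \mapsto o(T)$, the ordinal rank of a well-founded tree, is a $\Pi_1^1$-rank on $WF$; concretely, this means there are relations ${\leqslant_{\Sigma}}$ and ${\leqslant_{\Pi}}$ on $Tr$, which are $\Sigma_1^1$ and $\Pi_1^1$ respectively as subsets of $Tr \times Tr$, such that for $S \in WF$ one has, for all $T \in Tr$, that $T \in WF \wedge o(T) \leqslant o(S)$ if and only if $T \leqslant_{\Sigma} S$ if and only if $T \leqslant_{\Pi} S$.

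Next, since $f : P \to Tr$ is Borel and $C = f^{-1}(WF)$, the set $C$ is the Borel preimage of the $\Pi_1^1$ set $WF$, hence $\Pi_1^1$; this handles the first assertion. For the rank $\phi(x) = o(f(x))$, I would define the pulled-back relations $x \leqslant_{\Sigma}' y \iff f(x) \leqslant_{\Sigma} f(y)$ and $x \leqslant_{\Pi}' y \iff f(x) \leqslant_{\Pi} f(y)$ on $P \times P$. Because $f$ is Borel and $\leqslant_{\Sigma}$, $\leqslant_{\Pi}$ have the stated complexities, the relation $\leqslant_{\Sigma}'$ is $\Sigma_1^1$ and $\leqslant_{\Pi}'$ is $\Pi_1^1$ (Borel preimages of $\Sigma_1^1$ and $\Pi_1^1$ sets under the Borel map $(x,y)\mapsto (f(x),f(y))$). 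Then for $y \in C$, i.e. $f(y) \in WF$, the equivalences $x \in C \wedge \phi(x) \leqslant \phi(y) \iff f(x) \in WF \wedge o(f(x)) \leqslant o(f(y)) \iff f(x) \leqslant_\Sigma f(y) \iff x \leqslant_\Sigma' y$ (and similarly with $\leqslant_\Pi'$) follow directly from the rank property of $o(\cdot)$ on $WF$. This verifies that $\phi$ is a $\Pi_1^1$-rank on $C$ in the sense of \cite{Do-Book}.

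There is essentially no serious obstacle here: the only thing to be careful about is the bookkeeping of complexity classes under Borel preimages, and the fact that the defining property of a $\Pi_1^1$-rank only needs to be checked "below" elements $y$ that actually lie in $C$ (so that $f(y) \in WF$ and the rank $o(f(y))$ is a genuine ordinal), which is exactly where one uses $C = f^{-1}(WF)$. I would therefore present this as a short lemma whose proof is the two-line transfer argument above, citing the structure theory of $\Pi_1^1$-ranks and the canonical rank on $WF$ from the standard references rather than reproving it.
\end{proof}
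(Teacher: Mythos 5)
Your proof is correct and is the standard transfer argument: pull back the canonical $\Pi_1^1$-rank $o(\cdot)$ on $WF$ along the Borel map $f$, using that Borel preimages preserve the complexity classes of the associated relations $\leqslant_\Sigma$ and $\leqslant_\Pi$, and that the rank condition only needs to be verified below points $y$ with $f(y)\in WF$, which is exactly what $C=f^{-1}(WF)$ supplies. The paper does not actually prove this statement --- it is presented as a known "Fact" with a pointer to \cite{Do-Book} --- so there is no in-paper argument to compare against, but your reconstruction is precisely the argument that reference would give (and you have also silently corrected the paper's typo of ``rank on $A$'' to ``rank on $C$'').
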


We will also use the following fundamental properties of $\Pi_1^1$ ranks 

\begin{fact}
Let $P$ be a Polish space, $C$ be a $\Pi_1^1$ subset of $P$, and $\phi:C \to \omega_1$ be a $\Pi_1^1$-rank. Then
\begin{enumerate}
    \item for every $\xi<\omega_1$, the set $\{x : \phi(x) \leqslant \xi\}$ is Borel,
    \item if $A \subset C$ is analytic then $\sup\{\phi(x):x \in A\} <\omega_1$.
\end{enumerate}
\label{boundedness}
\end{fact}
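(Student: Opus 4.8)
Fact \ref{boundedness} is entirely classical; it is proved, for instance, in \cite{Ke-book} and \cite{Do-Book}, and in the paper I would simply cite it. For completeness, here is the argument I would give. Recall that attached to a $\Pi^1_1$-rank $\phi$ on a $\Pi^1_1$ set $C\subseteq P$ are the two relations on $P\times P$
\[
x\le^{*}_{\phi}y \iff x\in C\wedge\bigl(y\notin C\vee\phi(x)\le\phi(y)\bigr),\qquad x<^{*}_{\phi}y \iff x\in C\wedge\bigl(y\notin C\vee\phi(x)<\phi(y)\bigr),
\]
both of which are $\Pi^1_1$; hence their complements are $\Sigma^1_1$, and on $C\times C$ the two relations compute $\le$ and $<$ of the $\phi$-values. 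Replacing $\phi$ by the rank it induces, one may assume $\mathrm{range}(\phi)$ is an initial segment of $\omega_1$, which streamlines the induction in part (1).

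For part (1), the plan is to prove by transfinite induction on $\xi<\omega_1$ that $C_{\le\xi}:=\{x\in P:\phi(x)\le\xi\}$ is Borel. If $\xi\notin\mathrm{range}(\phi)$ then $C_{\le\xi}=\bigcup\{C_{\le\zeta}:\zeta<\xi,\ \zeta\in\mathrm{range}(\phi)\}$ is a countable union of Borel sets by the inductive hypothesis. If $\xi\in\mathrm{range}(\phi)$, I would fix once and for all a witness $y_0\in C$ with $\phi(y_0)=\xi$; then $C_{\le\xi}=\{x:x\le^{*}_{\phi}y_0\}$ exhibits $C_{\le\xi}$ as $\Pi^1_1$, while $P\setminus C_{\le\xi}=\{x:y_0<^{*}_{\phi}x\}$ exhibits it as $\Sigma^1_1$ as well, so Lusin's separation theorem gives that $C_{\le\xi}$ is Borel.

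For part (2), given an analytic $A\subseteq C$, the plan is to apply the Kunen--Martin theorem to the relation $E$ on $P$ given by $x\mathrel{E}y\iff x\in A\wedge y\in A\wedge\neg(x\le^{*}_{\phi}y)$. Since $A$ and the complement of $\le^{*}_{\phi}$ are both $\Sigma^1_1$, the relation $E$ is analytic; and $E$ is well-founded, because an infinite $E$-descending chain would lie in $A$ and produce a strictly decreasing sequence of ordinals $\phi(x_0)>\phi(x_1)>\cdots$. By Kunen--Martin an analytic well-founded relation on a Polish space has countable rank, and the rank of $E$ dominates the order type of $\{\phi(x):x\in A\}$, so $\sup\{\phi(x):x\in A\}<\omega_1$. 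The real content is carried entirely by the two black boxes invoked (Lusin separation for (1), Kunen--Martin for (2)); the only point that requires care is the bookkeeping — handling ordinals outside $\mathrm{range}(\phi)$, and at each step choosing correctly between a $\Pi^1_1$ comparison relation and its $\Sigma^1_1$ complement.
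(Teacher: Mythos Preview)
The paper does not prove this fact at all; it is recorded as a standard property of $\Pi_1^1$-ranks and used as a black box, with \cite{Do-Book} given as the reference for the notion of a $\Pi_1^1$-rank. Your write-up is precisely the textbook argument (Suslin/Lusin for (1), Kunen--Martin for (2)) and is correct in substance.

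One small slip in part (2): under the usual convention that a relation $\prec$ is well-founded when there is no sequence $(x_n)$ with $x_{n+1}\prec x_n$, your $E$ is \emph{not} well-founded---$x_{n+1}\mathrel{E}x_n$ reads $\phi(x_{n+1})>\phi(x_n)$ on $A$, and ascending $\phi$-chains in $A$ certainly exist whenever $\phi(A)$ is infinite. You want the reversed relation
\[
x\mathrel{E}y\iff x,y\in A\ \wedge\ \neg\bigl(y\le^{*}_{\phi}x\bigr),
\]
so that $x\mathrel{E}y$ corresponds to $\phi(x)<\phi(y)$. This is still $\Sigma^1_1$, is genuinely well-founded, and its Kunen--Martin rank equals the order type of $\phi(A)$, forcing $\sup\phi(A)<\omega_1$ as you intended. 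The fix is purely notational and the rest of your argument is unaffected.
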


\subsection{Complexity of Relations for $\mathcal{L}$}

The main purpose of this section is to prove the following proposition.

\begin{proposition} Let $\mathcal{B}\subset \mathcal{L}$ be Borel. Then each of the following classes is analytic. \begin{enumerate}[(i)]\item The class of operators factoring through a member of $\mathcal{B}$. \item The class of operators through which a member of $\mathcal{B}$ factors. \item The class of operators $A$ such that a member of $\mathcal{B}$ factors through a restriction of $A$.  \item The class of operators which factor through a restriction of a member of $\mathcal{B}$. \end{enumerate}
\label{lets have relations}
\end{proposition}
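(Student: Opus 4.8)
The plan is to express each of the four classes as the image of a Borel set under a Borel map, using the standard Borel structure on $\mathcal{L}$ and the coding machinery described in the excerpt. The unifying device is this: factorization statements of the form ``$A = DBC$'' and ``$A = D(B|_{Z_1})C$'' are, after coding operators by their action on the dense sequences $(d_n(X))$, arithmetical quantifications over the Polish parameters describing $C$, $D$, $B$, and the subspaces involved, together with a closed condition asserting that the relevant diagram commutes and that all maps are bounded. Since the maps we quantify over range over Polish spaces, and the ``diagram commutes and is bounded'' condition is Borel (indeed $G_\delta$), each class is a projection of a Borel set, hence analytic.

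More concretely, for $(i)$ I would build an auxiliary standard Borel space whose points are tuples $\big((X,Y,A),\ (Z,W,B),\ (c_n)_{n},\ (d_n')_n,\ k\big)$ where $(X,Y,A)\in\mathcal{L}$ is the operator we are testing for membership, $(Z,W,B)\in\mathcal{B}$, the sequences $(c_n)\subset C(2^\nn)^\nn$ and $(d_n')\subset C(2^\nn)^\nn$ code candidate operators $C:X\to Z$ and $D:W\to Y$ via $C(d_n(X))=c_n$ and $D(d_n(W))=d_n'$, and $k\in\nn$ is a common bound. The condition that $(c_n)$ codes a bounded operator $X\to Z$ with norm $\leqslant k$ is Borel (it is the condition already used to define $\mathcal{L}$, plus the requirement $c_n\in Z$, which is Borel by the existence of the Borel selectors $d_n$), and similarly for $(d_n')$; the equation $A = DBC$ becomes the Borel condition $A d_m(X) = D B C d_m(X)$ for all $m$, which after unwinding is a countable conjunction of closed conditions on the parameters (using continuity of all the coded operators and density of $(d_m(X))$). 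The set of tuples satisfying all these constraints is Borel, and the class in $(i)$ is its image under the (Borel) projection onto the first coordinate $(X,Y,A)$; hence it is analytic. Part $(ii)$ is entirely symmetric — swap the roles of the tested operator and the member of $\mathcal{B}$ — so the same construction with $\mathcal{B}$ in the inner coordinate gives an analytic class.

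For $(iii)$ and $(iv)$ the only new ingredient is quantifying over a subspace $Z_1$ of $Z$ (and a subspace $W_1$ of $W$). Here I would use the standard fact that $\textbf{SB}$ is a standard Borel space and that the relation ``$Z_1$ is a closed subspace of $Z$'' is Borel on $\textbf{SB}\times\textbf{SB}$, together with the existence of Borel selectors picking a dense sequence in $Z_1$; then ``$A$ factors through $B|_{Z_1}$'' is coded exactly as in $(i)$ with the extra Borel constraints that the range of $C$ lands in $Z_1$ and the restriction $B|_{Z_1}$ maps into $W_1$. So for $(iii)$: a tested operator $A=(X,Y,A)$ lies in the class iff there exist $(Z,W,B)\in\mathcal{B}$, closed subspaces $X_1\leqslant X$, $Y_1\leqslant Y$ with $A|_{X_1}:X_1\to Y_1$, and operators exhibiting that $B$ factors through $A|_{X_1}$ — all of which are Borel conditions on the enlarged parameter space — so the class is again a Borel projection. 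Part $(iv)$ is the mirror image: $A$ factors through a restriction of some $B\in\mathcal{B}$, i.e. there exist $(Z,W,B)\in\mathcal{B}$, closed $Z_1\leqslant Z$, $W_1\leqslant W$ with $B|_{Z_1}:Z_1\to W_1$, and a factorization of $A$ through $B|_{Z_1}$.

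The main obstacle — really the only place requiring care rather than bookkeeping — is verifying that all the ``subspace'' and ``coded operator'' predicates are genuinely Borel in the Effros–Borel structure, and that restricting an operator $B$ coded in $\mathcal{L}$ to a Borel-chosen subspace $Z_1$ can itself be carried out Borel-measurably (so that $B|_{Z_1}$, as an element of $\mathcal{L}(Z_1,W_1)$, depends Borel-measurably on the parameters). This is handled by the Kuratowski–Ryll-Nardzewski selection theorem and the elementary stability properties of the Effros–Borel structure (the relation ``$x\in Z$'' is Borel on $C(2^\nn)\times\textbf{SB}$, countable intersections and spans of Borel-varying families are Borel, etc.), exactly as in the treatment of $\mathcal{L}$ in \cite{BCFrWa-JFA}; once these measurability facts are in hand, the four classes are displayed as projections of Borel sets and the proposition follows. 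I do not expect any descriptive-set-theoretic subtlety beyond this: no rank computation or uniformization is needed, only the observation that each class is $\Sigma_1^1$ because it is $\exists^{\text{Polish}}(\text{Borel})$.
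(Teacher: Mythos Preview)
Your proposal is correct and follows essentially the same approach as the paper: each class is exhibited as the projection of a Borel subset of a product space encoding the factorization diagram. The paper packages this via Borel sets $\mathcal{F},\mathcal{P}\subset\mathcal{L}^4$ of commuting squares (with the restriction subspaces absorbed into the codomain and domain of the factoring operators $B_1,B_3$ rather than quantified over separately), and isolates the Borel measurability of composition and evaluation as a standalone lemma, but the substance is the same.
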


\begin{lemma} For $u\in C(2^\nn)$, let $D_u:\textbf{\emph{SB}}\to \rr$ be given by $D_u(X)=\inf_{x\in X} \|u-x\|$.  Then there exists a Polish topology $\tau$ on $\textbf{SB}$ whose Borel $\sigma$-algebra is the Effros-Borel $\sigma$-algebra and such that $D_u:(\textbf{\emph{SB}}, \tau)\to \rr$ is continuous for each $u\in C(2^\nn)$ and $d_n:(\textbf{\emph{SB}}, \tau)\to C(2^\nn)$ is continuous for each $n\in \nn$.

\label{beer}
\end{lemma}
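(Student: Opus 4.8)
\textbf{Proof proposal for Lemma \ref{beer}.}

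The plan is to exploit the standard fact (due to the Kuratowski--Ryll-Nardzewski selection theorem and its refinements, see \cite{Ke-book}) that the Effros--Borel structure on the space $F(C(2^\nn))$ of closed subsets of $C(2^\nn)$ is generated by a Polish topology, and then to upgrade this to a topology that additionally makes the specified countably many functions continuous. First I would recall that the Effros--Borel $\sigma$-algebra on $\textbf{SB}$ is generated by the maps $X\mapsto D_u(X)=\inf_{x\in X}\|u-x\|$ as $u$ ranges over a countable dense subset $Q$ of $C(2^\nn)$; indeed these maps are already Borel for the Effros--Borel structure, and the sets $\{X: D_u(X)<r\}$ for $u\in Q$, $r\in \mathbb{Q}$ generate it. So the natural candidate for $\tau$ is the weakest topology making all the maps $\{D_u: u\in C(2^\nn)\}$ and all the maps $\{d_n: n\in\nn\}$ continuous. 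Since there are only countably many functions $d_n$, and since $\{D_u: u\in C(2^\nn)\}$ is determined by its restriction to a countable dense set $Q$ (because $u\mapsto D_u(X)$ is $1$-Lipschitz uniformly in $X$, so continuity for $u\in Q$ forces continuity for all $u$), this is the weak topology induced by a countable family of maps into the Polish spaces $\rr$ and $C(2^\nn)$.

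The key steps, in order, are: (1) observe that each $d_n$ and each $D_u$ is Borel for the Effros--Borel structure — this is classical and cited in the paragraph preceding the lemma where the sequence $(d_n)$ is introduced; (2) form the map $\Phi:\textbf{SB}\to \rr^Q\times C(2^\nn)^\nn$ given by $\Phi(X)=\big((D_u(X))_{u\in Q}, (d_n(X))_{n=1}^\infty\big)$, which is Borel and, crucially, injective — injectivity holds because $\{d_n(X):n\in\nn\}$ is dense in $X$, so $X$ is recovered as the closure of $\{d_n(X):n\}$, or alternatively because $X=\{u: D_u(X)=0\}$; (3) show the range $\Phi(\textbf{SB})$ is a Borel subset of the Polish space $\rr^Q\times C(2^\nn)^\nn$ — this follows from Lusin's theorem since $\Phi$ is an injective Borel map between standard Borel spaces, hence a Borel isomorphism onto its (necessarily Borel) image; (4) transport the Polish topology of $\Phi(\textbf{SB})$ (as a Borel, hence absolutely Borel, subset of a Polish space it carries a finer Polish topology with the same Borel sets, by the standard theorem on Borel subsets of Polish spaces) back to $\textbf{SB}$ via $\Phi$ to obtain $\tau$; (5) verify that $\tau$ has the required properties: the Borel $\sigma$-algebra of $\tau$ is exactly the Effros--Borel $\sigma$-algebra because $\Phi$ is a Borel isomorphism for those structures, each $d_n$ is continuous since $\Phi$ followed by the $n$-th coordinate projection onto $C(2^\nn)$ is continuous, and each $D_u$ for $u\in Q$ is continuous since it is a coordinate projection of $\Phi$; finally extend continuity of $D_u$ from $u\in Q$ to all $u\in C(2^\nn)$ using the uniform estimate $|D_u(X)-D_v(X)|\leqslant \|u-v\|$, so $D_u$ is a uniform limit of continuous functions $D_{u_k}$ with $u_k\in Q$, $u_k\to u$.

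The main obstacle, such as it is, is purely bookkeeping: ensuring that refining to a finer Polish topology (step 4) does not enlarge the Borel $\sigma$-algebra. This is handled by the classical fact that if $B$ is a Borel subset of a Polish space $P$, there is a Polish topology on $B$, finer than the subspace topology, having the same Borel sets (Kechris, \cite{Ke-book}); applied to $B=\Phi(\textbf{SB})\subset \rr^Q\times C(2^\nn)^\nn$ this gives exactly what we need, since the coordinate maps $D_u$ ($u\in Q$) and $d_n$ remain continuous for a finer topology. One small point worth flagging is that in step (2)--(3) I should double-check injectivity is used correctly: it is, and I will record the identity $X=\overline{\{d_n(X):n\in\nn\}}$ explicitly, which also makes the map $\Phi^{-1}$ manifestly well-defined on the image. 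With these pieces assembled the lemma follows, and this same topology $\tau$ is then the one implicitly used throughout the remainder of the section whenever a topology on $\textbf{SB}$ (and the induced product topology on $\mathcal{L}$) is needed.
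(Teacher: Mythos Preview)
Your proof is correct. The paper's argument is shorter but relies on the same descriptive set theoretic toolbox: it starts from the \emph{Beer topology} $\tau'$ on $\textbf{SB}$ (the coarsest topology making every $D_u$ continuous), cites \cite[p.~75]{Ke-book} for the fact that $\tau'$ is Polish and generates the Effros--Borel $\sigma$-algebra, and then invokes the standard refinement theorem \cite[pp.~82--83]{Ke-book} to pass to a finer Polish $\tau\supset\tau'$ with the same Borel sets making the countably many Borel maps $d_n$ continuous. Your route instead builds the topology from scratch via the injective Borel embedding $\Phi$ into $\rr^Q\times C(2^\nn)^\nn$, appeals to Lusin--Souslin for the Borel image, and then pulls back a Polish topology on that image; the cost is the extra Lipschitz step extending continuity of $D_u$ from the countable dense $Q$ to all $u\in C(2^\nn)$, which the paper avoids because the Beer topology already handles every $D_u$ at once. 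Both arguments ultimately rest on the same Kechris-style refinement principle, so the difference is one of packaging rather than substance; the paper's version is more economical, while yours is more self-contained and makes the role of injectivity of $\Phi$ (via $X=\overline{\{d_n(X):n\}}$) explicit.
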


\begin{proof} Let $\tau'$ be the coarsest topology on $\textbf{SB}$ such that $D_u:(\textbf{SB}, \tau')\to \rr$ is continuous for each $u\in C(2^\nn)$ (this topology is called the \emph{Beer topology}). Then this is a Polish topology whose Borel $\sigma$-algebra is the Effros-Borel $\sigma$-algebra \cite[Page 75]{Ke-book}.  By standard techniques \cite[pages 82, 83]{Ke-book}, there is a Polish topology $\tau$ on $\textbf{SB}$ such that $\tau'\subset \tau$, $\tau$ and $\tau'$ generate the same $\sigma$-algebra,  and such that for all $n\in \nn$, $d_n:(\textbf{SB }, \tau)\to C(2^\nn)$ is continuous. \end{proof}

We need the following lemma.
\begin{lemma}For $r\in \nn$, let $$R_r=\{(u,U,V,B): u\in U, (U,V,B)\in \mathcal{L}, \|B\|\leqslant r\}$$ and let $$P_r=\{((X,Y,A),(U,V,B))\in \mathcal{L}\times \mathcal{L}: \|B\|\leqslant r, A(X)\subset V\subset Y\}.$$   \begin{enumerate}[(i)]\item The set $R_r$ is Borel and the function $E:R_r\to C(2^\nn)$ given by $E(u,U,V,A)=Av$ is Borel. \item The set $P_r$ is Borel and the function $C(((X,Y,A), (U,V,B)))=(X,V, BA)$ is Borel.  \end{enumerate}
\label{ddbb}
\end{lemma}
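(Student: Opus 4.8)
\textbf{Proof proposal for Lemma \ref{ddbb}.}

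The plan is to verify Borelness of the sets $R_r$ and $P_r$ and the maps $E$ and $C$ by unwinding the definitions in terms of the canonical dense-sequence selectors $(d_n)$ and the Beer-type topology provided by Lemma \ref{beer}. Throughout I will work with the Polish topology $\tau$ on $\textbf{SB}$ furnished by Lemma \ref{beer}, so that each $D_u$ is continuous, each $d_n$ is continuous, and (as recorded in the setup of $\mathcal{L}$) the maps $(U,V,B)\mapsto (Vd_n(U) \text{ term}) $ behave Borel-measurably. I will use that $\mathcal{L}$ is a standard Borel subset of $\textbf{SB}\times\textbf{SB}\times C(2^\nn)^\nn$, and that membership of a point $u\in C(2^\nn)$ in a space $X\in\textbf{SB}$ is Borel, since $u\in X$ iff $D_u(X)=0$, and $D_u$ is $\tau$-continuous.

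For part (i): the condition ``$u\in U$'' is Borel by the preceding remark; the condition ``$(U,V,B)\in\mathcal{L}$'' is Borel since $\mathcal{L}$ is a Borel subset of the ambient product; and ``$\|B\|\leqslant r$'' is Borel because $\|B\|=\sup_{n}\{\|\sum q_i b_i\| : \|\sum q_i d_i(U)\|\leqslant 1, q_i\in\mathbb{Q}\}$ is a countable sup/inf of continuous-in-the-coordinates functions (here $(b_i)$ are the coordinates of $B$). So $R_r$ is Borel. For the map $E(u,U,V,B)=Bu$: since $u\in U$, pick a sequence of rational combinations $v_k=\sum_{i} q^{(k)}_i d_i(U)$ with $v_k\to u$ in $C(2^\nn)$; one can make the choice of $(v_k)$ Borel in $(u,U)$ by a standard selection argument (minimizing $\|u-\sum_{i\le m} q_i d_i(U)\|$ over rationals of bounded height, using continuity of $d_i$). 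Then $Bv_k=\sum_i q^{(k)}_i b_i$ is a Borel function of the data, and because $\|B\|\leqslant r$ the sequence $Bv_k$ is Cauchy in $C(2^\nn)$ with limit $Bu$; a pointwise limit of Borel functions is Borel, so $E$ is Borel.

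For part (ii): the condition $\|B\|\leqslant r$ is Borel as above. The condition $A(X)\subset V$ is equivalent to $\bigwedge_n (Ad_n(X)\in V)$, i.e. $\bigwedge_n D_{a_n}(V)=0$ where $(a_n)$ are the coordinates of $A$; each $D_{a_n}(V)$ is $\tau$-continuous in $V$ and $a_n$ is a Borel coordinate, so this is Borel. The condition $V\subset Y$ is likewise $\bigwedge_n D_{d_n(V)}(Y)=0$, Borel for the same reason. Hence $P_r$ is Borel. For the map $C((X,Y,A),(U,V,B))=(X,V,BA)$: the first two coordinates $(X,V)$ are just coordinate projections, hence Borel; the third coordinate is the sequence $(BAd_n(X))_{n=1}^\infty$, and since $A(X)\subset V$ we have $Ad_n(X)\in V$, so $BAd_n(X)=E(Ad_n(X),U\text{-data},V,B)$ can be computed by applying the Borel map from part (i) — more directly, $BAd_n(X)=\lim_k \sum_i q^{(k,n)}_i b_i$ where $\sum_i q^{(k,n)}_i d_i(U)\to Ad_n(X)$, again a Borel choice followed by a Borel limit. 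Finally one must check the output lands in $\mathcal{L}$, which it does since $BA$ is a genuine bounded operator $X\to V$ and the coordinate description matches the coding; this membership is automatic from $\mathcal{L}$ being exactly the codes of bounded operators.

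The main obstacle I anticipate is purely bookkeeping rather than conceptual: making the approximation choices $(v_k)$ and $(q^{(k,n)}_i)$ genuinely Borel (and not merely ``pointwise existent'') in all parameters simultaneously, and confirming that the Beer topology $\tau$ from Lemma \ref{beer} makes every one of the auxiliary distance functions $D_u(\cdot)$ and selector functions $d_n(\cdot)$ continuous in the right variables so that the countable Boolean combinations above are legitimately Borel. Once the topology of Lemma \ref{beer} is fixed and the standard ``Borel selection of rational approximants'' device is invoked once, all four assertions reduce to noting that each defining clause is a countable conjunction of zero-sets of continuous functions and each map is a pointwise limit of continuous-coordinate functions.
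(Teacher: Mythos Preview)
Your argument is correct. For the Borelness of $R_r$ and $P_r$ you do essentially what the paper does: express each defining clause as a countable conjunction of conditions of the form $D_w(Z)=0$ or a norm inequality, all of which are closed in the topology $\tau$ of Lemma \ref{beer}. The paper phrases this as ``$R_r$ and $P_r$ are closed,'' but the content is identical.

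Where you diverge from the paper is in handling the maps $E$ and $C$. The paper proves they are actually \emph{continuous} for the topology $\tau$: for $E$ it runs a direct $\varepsilon/3$ estimate (approximate $u$ by some $d_k(U)$, use $d_k(U_n)\to d_k(U)$ and $v_{n,k}\to v_k$, and the uniform bound $\|B_n\|\leqslant r$), and for $C$ it checks sequential continuity coordinatewise, reducing to the continuity of $E$. You instead obtain Borelness by a Borel selection of rational approximants $v_k\to u$ and then take a pointwise limit of the Borel maps $(u,U,V,(b_i))\mapsto \sum_i q^{(k)}_i b_i$. This is a legitimate alternative: the ``first index $n$ with $\|u-\sum_i q^{(n)}_i d_i(U)\|<1/k$'' is a Borel function of $(u,U)$, and the uniform bound $\|B\|\leqslant r$ gives convergence. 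The paper's route is a bit cleaner (no selection machinery, and it yields the stronger conclusion of continuity), while yours has the advantage of making explicit that only the Borel structure, not the particular Polish topology, is being used. Either way the lemma follows.
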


\begin{proof}Assume $\textbf{SB}$ is topologized with the topology $\tau$ given in Lemma \ref{beer}. Then $R_r$ consists of those $(u,U, V, (v_n)_{n=1}^\infty)\in C(2^\nn)\times \mathcal{L}$ such that \begin{enumerate}[(i)]\item for any $s\in \nn$,  any scalar sequence $(a_i)_{i=1}^s$,  and any $s\in \nn$, $\|\sum_{i=1}^s a_i v_i\|\leqslant r \|\sum_{i=1}^s a_i d_i(U)\|$, \item $D_u(U)=0$, \end{enumerate} and $P_r$ consists of those $((X,Y,(y_n)_{n=1}^\infty ),(U,V,(v_n)_{n=1}^\infty)\in \mathcal{L}\times \mathcal{L}$ such that \begin{enumerate}[(i)]\item for any scalar sequence $(a_i)_{i=1}^n$,  any $s\in \nn$,and  $\|\sum_{i=1}^s a_i v_i\|\leqslant r \|\sum_{i=1}^s a_i d_i(U)\|$, \item for all $n\in \nn$, $D_{y_n}( V)=D_{d_n(V)}(Y)=0$.   \end{enumerate}

These are closed sets in $C(2^\nn)\times \mathcal{L}$, $\mathcal{L}\times \mathcal{L}$, respectively, where $\mathcal{L}$ has the product topology coming from $\tau$ and the norm topology of $C(2^\nn)$. Therefore $R_r$ and $P_r$ are Borel sets.

Now fix $(u_n, U_n, V_n, (v_{n,k})_{k=1}^\infty)_{n=1}^\infty\subset R_r$ converging to $(u, U, V, (v_k)_{k=1}^\infty)$.  Let $B_n:U_n\to V_n$ be the operator coming from $(U_n, V_n, (v_{n,k})_{k=1}^\infty)$   Fix $\ee>0$ and, to obtain a contradiction, assume $\inf_n \|B_nu_n-Bu\|\geqslant \ee$.   We may fix $k\in \nn$ such that $\|u-d_k(U)\|<\ee/9r$ and $m\in \nn$ such that for any $n\geqslant m$, $\|d_k(U_n)-d_k(U)\|<\ee/9r$, $\|v_{n,k}-v_k\|<\ee/3$, and $\|u_n-u\|<\ee/9r$.  Then for any $n\geqslant m$, $$\|d_k(U_n)-u_n\| \leqslant \|d_k(U_n)-d_k(U)\| + \|d_k(U)-u\| + \|u-u_n\| < 3(\ee/9r)= \ee/3r,$$ and \begin{align*} \|B_nu_n -Bu \| & \leqslant \|B_nu_n-B_n d_k(U_n)\| + \|B_nd_k(U_n)-Bd_k(U)\| +\|Bd_k(U)-Bu\| \\ & =  \|B_nu_n-B_n d_k(U_n)\| + \|v_{n,k}-v_k\| +\|Bd_k(U)-Bu\| \\ & <\ee/3 + \ee/3+ \ee/3 =\ee.  \end{align*} This contradiction shows the continuity of $E$ with respect to our topology of choice. Thus $E$ is Borel.

Now assume $((X_n, Y_n, (y_{n,k})_{k=1}^\infty), (U_n, V_n, (v_{n,k})_{k=1}^\infty))_{n=1}^\infty \subset P_r$ is a sequence which converges to $((X,Y, (y_k)_{k=1}^\infty), (U, V, (v_k)_{k=1}^\infty))$.    Let $A_n$ be the operator corresponding to $(X_n, Y_n, (y_{n,k})_{k=1}^\infty)$, and let $B_n$, $A$, $B$ be the operators corresponding to the other triples.  Note that $B_nA_n$, $BA$ are well-defined operators, and we may assign to each pair of triples some triple $(X_n, V_n, (w_{n,k})_{k=1}^\infty)$ and $(X, V, (w_k)_{k=1}^\infty)$.   Note that $w_{n,k}= B_nA_n d_k(X_n)= B_n y_{n,k}$ and $w_k=BAd_k(X)= B y_k$.    In order to see that the composition is continuous, we need to know that $X_n\to X$, $V_n\to V$, and $w_{n,k}\to w_k$ for each $k\in \nn$.   Of course, $X_n\to X$, $V_n\to V$ by assumption. Fix $k\in \nn$. Then by hypothesis, $y_{n,k}\to y_k$, whence by continuity of the evaluation, $B_ny_{n,k}\to By_k$. This shows continuity of $C$ with respect to our topology of choice, and $C$ is Borel. 
\end{proof}

We can now prove Proposition \ref{lets have relations}.

\begin{proof}[Proof of Proposition \ref{lets have relations}]

We define the following set of 4-tuples that will describe factorization of one operator through another.

$$\mathcal{F}=\{(X_i, Y_i, B_i)_{i=1}^4 \in \mathcal{L}^4: X_1=X_4,~ Y_1=X_2,~ Y_2=X_3,~Y_3=Y_4,~ B_3 B_2 B_1=B_4\}.$$ 

The following diagram illustrates the reason we care about $\mathcal{F}$. 

$$\begin{CD}  X_1=X_4 	@>B_4>>	Y_3=Y_4  \\ @VVB_1V			@AAB_3A \\ Y_1=X_2	@>B_2>>	Y_2=X_3\end{CD}$$

That is, $B_4:X_4\to Y_4 $ factors through $B_2:X_2\to Y_2$ if and only if there exist $(X_1, Y_1, B_1)$, $(X_3, Y_3, B_3)$ such that $(X_i, Y_i, B_i)_{i=1}^4\in \mathcal{F}$. 

We define the following set of 4-tuples that will describe factorization of one operator through the restriction of another.

$$\mathcal{P}=\{(X_i, Y_i, B_i)_{i=1}^4 \in \mathcal{L}^4: X_1=X_4,~ Y_1\subset X_2,~ B_2(Y_1)\subset X_3\subset Y_2,~Y_3=Y_4,~ B_3 B_2 B_1=B_4\}.$$

The next diagram illustrates the reason we care about $\mathcal{P}$.    

$$\begin{CD}  X_1=X_4 	@>B_4>>	Y_3=Y_4  \\ @VVB_1V			@AAB_3A \\ Y_1	@>B_2|_{Y_1}>>	X_3 \\ \cap & & \cap \\ X_2 @>B_2>> Y_3\end{CD}$$

That is, $B_4:X_4\to Y_4$ factors through a restriction of $B_2:X_2\to Y_2$ if and only if there exist $(X_1, Y_1, B_1)$, $(X_3, Y_3, B_3)$ such that $(X_i, Y_i, B_i)_{i=1}^4\in \mathcal{P}$.

Given $(X_i, Y_i, B_i)_{i=1}^4$, for any $1\leqslant i,j\leqslant 4$, $X_i\subset Y_j$, $X_i=Y_j$, $Y_j\subset X_i$ are Borel conditions in $\mathcal{L}^4$.  Furthermore, $B_2(Y_1)\subset X_3$ means that, if $B_2:X_2\to Y_2$ is the operator defined by $(X_2, Y_2, (y_n)_{n=1}^\infty)$, $y_n\in X_2$ for all $n\in \nn$, which is also a Borel condition.  By Proposition \ref{ddbb}, we deduce that the remaining conditions are Borel.

For $j=2,4$ let  $\pi_j:\mathcal{L}^4\to \mathcal{L}$, $\pi_j((X_i, Y_i, B_i)_{i=1}^4)= (X_j, Y_j, B_j)$ be the projection operator. Then each of the four classes can be realized as one of the four $$\pi_4(\mathcal{F}\cap [ \mathcal{L}\times \mathcal{B}\times \mathcal{L}^2]),$$ $$\pi_4(\mathcal{P}\cap [\mathcal{L}\times \mathcal{B}\times \mathcal{L}^2]),$$ $$\pi_2( \mathcal{F}\cap [\mathcal{L}^3\times \mathcal{B}]),$$ $$\pi_2(\mathcal{P}\cap [\mathcal{L}^3\times \mathcal{B}]).$$ 
This concludes the proof.
\end{proof}


For $(X,Y,A),(Z,W,B) \in \mathcal{L}$,  we write $(X,Y,A)\cong(Z,W,B)$ if $A$ is isomorphic to $B$.

\begin{lemma}
The set $\{((X,Y,A),(Z,W,B)) \in \mathcal{L}^2: A \cong B\}$
is analytic. Moreover if $\mathcal{A} \subset \mathcal{L}$ is analytic then 
$$\mathcal{A}_{\cong} =\{ (Z,W,B) \in \mathcal{L} : \exists (X,Y,A)\mbox{ with }(X,Y,A)\cong (Z,W,B)\}$$
is called the isomorphic saturation of $\mathcal{A}$ and is also analytic.
\label{keep em saturated}
\end{lemma}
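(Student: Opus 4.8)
The plan is to reduce everything to the analyticity of the factorization relations already established, together with the standard fact that Borel images of analytic sets are analytic. First I would unwind the definition of isomorphism of operators: $A \in \mathfrak{L}(X,Y)$ is isomorphic to $B \in \mathfrak{L}(X',Y')$ precisely when there are isomorphisms $i : X \to X'$, $j : Y \to Y'$ with $jA = Bi$ and $iA^{-1}$-type relations — but since we are working with the codings in $\mathcal{L}$, it is cleaner to express $A \cong B$ purely in terms of mutual factorization through restrictions or, more directly, via the existence of the witnessing operators $i, j$ and their inverses as elements of $\mathcal{L}$. Concretely, $A \cong B$ iff there exist $(X,X',i), (X',X,i'), (Y,Y',j), (Y',Y,j') \in \mathcal{L}$ such that $i'i = \mathrm{id}_X$, $ii' = \mathrm{id}_{X'}$, $j'j = \mathrm{id}_Y$, $jj' = \mathrm{id}_{Y'}$, $jA = Bi$ and $j'B = Ai'$.

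Next I would check that the set of tuples satisfying these equations is Borel in the appropriate product of copies of $\mathcal{L}$. Each condition of the form ``$X_i = X_j$'' or ``$Y_i = Y_j$'' is Borel in $\mathcal{L}^k$, as already noted in the proof of Proposition~\ref{lets have relations}. Each composition condition ``$PQ = R$'' (for appropriately matched domains and ranges) is Borel: this is exactly the content of Lemma~\ref{ddbb}(ii), which says the composition map $C$ is Borel, so the graph $\{(P,Q,R) : PQ = R\}$ (with matching domain/codomain constraints) is Borel, being the preimage of the diagonal under a Borel map into $\mathcal{L} \times \mathcal{L}$ (and equality of two coded operators is a Borel condition, via the functions $D_u$ of Lemma~\ref{beer}). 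The identity-operator conditions ``$PQ = \mathrm{id}$'' are the special case where $R$ is the coded identity on the relevant space, again Borel since $\Psi$ from Lemma~\ref{cash money} is Borel. Thus the full set $\mathcal{I} \subset \mathcal{L}^6$ of witnessing tuples is Borel.

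Then the set $\{((X,Y,A),(Z,W,B)) : A \cong B\}$ is the image of $\mathcal{I}$ under the Borel projection sending a tuple $(A, i, i', B, j, j')$ to the pair $(A, B)$; as a Borel image of a Borel set it is analytic. For the moreover clause, if $\mathcal{A} \subset \mathcal{L}$ is analytic, write $\mathcal{A} = f(C)$ for a Borel set $C$ in some standard Borel space and a Borel map $f$; then $\mathcal{A}_{\cong}$ is the projection onto the last coordinate of the set $\{((X,Y,A),(Z,W,B)) : (X,Y,A) \in \mathcal{A},\ A \cong B\}$, which is analytic (an intersection of the analytic relation above with the analytic set $\mathcal{A} \times \mathcal{L}$), and again projections of analytic sets are analytic.

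The main obstacle I anticipate is purely bookkeeping: verifying carefully that ``two coded operators are equal'' and ``one coded operator composed with another equals a third'' are genuinely Borel conditions in the product coding, i.e. correctly invoking Lemmas~\ref{beer} and~\ref{ddbb} with all the domain/codomain matching built in as (Borel) side conditions. There is no deep difficulty here — once the factorization machinery of Proposition~\ref{lets have relations} and Lemma~\ref{ddbb} is in place, the isomorphism relation is just a finite Boolean combination of Borel conditions followed by a Borel projection — but one must be slightly attentive that the coded inverses $i', j'$ are themselves required to be coded operators on the right spaces, so the quantifier is over $\mathcal{L}^6$ rather than anything larger, keeping the class at $\Sigma^1_1$.
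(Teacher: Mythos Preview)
Your approach is correct and differs from the paper's in a useful way. You witness the isomorphism by coding the maps $i,j$ and their inverses directly as four additional elements of $\mathcal{L}$, so that the isomorphism relation becomes the projection of a Borel subset of $\mathcal{L}^6$ cut out by domain/range matching and composition equations; Lemma~\ref{ddbb} (plus the fact that the diagonal in a standard Borel space is Borel) handles the composition constraints, exactly as you say. The paper instead avoids coding the isomorphisms as operators: it works in $(\mathcal{L}\times C(2^\nn)^\nn\times C(2^\nn)^\nn)^2$, witnessing $i$ and $j$ by pairs of equivalent dense sequences $(x_n)\sim(z_n)$ in $X,Z$ and $(y_n)\sim(w_n)$ in $Y,W$, with the commutativity encoded pointwise via $Ax_n=y_n$, $Bz_n=w_n$. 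Both routes produce a Borel set in a larger standard Borel space projecting onto the isomorphism relation; yours recycles the composition machinery already built for Proposition~\ref{lets have relations}, while the paper's is more self-contained, relying only on the elementary Borel facts about dense spanning sequences and $k$-equivalence cited from \cite{Do-Book}. Your remark that the only obstacle is bookkeeping (taking a union over norm bounds $r$ so that Lemma~\ref{ddbb} applies, and checking equality of coded operators is Borel) is accurate.
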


\begin{proof} By \cite[Pages 10,11]{Do-Book}, the set of pairs $(X, (x_n)_{n=1}^\infty)\in \textbf{SB}\times C(2^\nn)^\nn$ such that $[x_n: n\in \nn]=X$ is Borel in $\textbf{SB}\times C(2^\nn)$, and the sets of pairs $((x_n)_{n=1}^\infty, (y_n)_{n=1}^\infty)\in C(2^\nn)^\nn\times C(2^\nn)^\nn$ such that there exists $k\in \nn$ such that $(x_n)_{n=1}^\infty$ and $(y_n)_{n=1}^\infty$ are $k$-equivalent is closed in $C(2^\nn)^\nn\times C(2^\nn)^\nn$.    We now consider the subset $\mathcal{B}$ of $(\mathcal{L}\times C(2^\nn)^\nn\times C(2^\nn)^\nn)^2$ consisting of those $$((X,Y,A), (x_n)_{n=1}^\infty, (y_n)_{n=1}^\infty, (Z,W,B), (z_n)_{n=1}^\infty, (w_n)_{n=1}^\infty)$$ such that there exist $k,l\in \nn$ such that \begin{enumerate}[(i)]\item $X=[x_n: n\in \nn]$, \item $Y=[y_n: n\in \nn]$, \item $Z=[z_n: n\in \nn]$, \item $W=[w_n: n\in \nn]$, \item $(x_n)_{n=1}^\infty$ is $k$-equivalent to $(z_n)_{n=1}^\infty$, \item $(y_n)_{n=1}^\infty$ is $l$-equivalent to $(w_n)_{n=1}^\infty$, \item for every $n\in \nn$, $Ax_n=y_n$ and $Bz_n=w_n$. \end{enumerate} Each of (i)-(vi) is a Borel condition by the previously cited fact from \cite{Do-Book}, and (vii) is a Borel condition by Lemma \ref{ddbb}. Therefore $\mathcal{B}$ is Borel. 

Now fix an analytic subset $\mathcal{A}$ of $\mathcal{L}$, a Polish space $P$, and a continuous function $f:P\to \mathcal{L}$ such that $f(P)=\mathcal{A}$.  Define $$g, \pi:P\times C(2^\nn)^\nn\times C(2^\nn)^\nn\times \mathcal{L}\times C(2^\nn)^\nn \times C(2^\nn)^\nn\to \mathcal{L}\times C(2^\nn)^\nn\times C(2^\nn)^\nn\times \mathcal{L}\times C(2^\nn)^\nn \times C(2^\nn)^\nn$$ and $$\Pi:\mathcal{L}\times C(2^\nn)^\nn\times C(2^\nn)^\nn\times \mathcal{L}\times C(2^\nn)^\nn\times C(2^\nn)^\nn\to \mathcal{L}\times \mathcal{L}$$ by $$g(x, \sigma, \tau, (Z,W,B), \sigma', \tau')= (f(x), \sigma, \tau, (Z,W,B), \sigma', \tau'),$$ $$\pi(x, \sigma, \tau, (Z,W,B), \sigma', \tau')=(Z,W,B),$$ and $$\Pi((X,Y,A), \sigma, \tau, (Z,W, B), \sigma', \tau')=((X,Y,A), (Z,W,B)).$$   Then $$\{((X,Y,A), (Z,W,B))\in \mathcal{L}^2: A\cong B\}= \Pi(\mathcal{B})$$ is analytic.  Furthermore, let $\mathcal{B}_0=g^{-1}(\mathcal{B})$ and note that $$\{(Z,W,B): (\exists (X,Y,A)\in \mathcal{A})((X,Y,A)\cong (Z,W,B))\}=\pi(\mathcal{B}_0)$$ is analytic.

\end{proof}


\subsection{The standard space $\mathcal{L}$ and the existence of universal operators}

Recall first that a class $\mathfrak{J}$ of operators is \emph{separably determined} if $A:X\to Y$ lies in $\mathfrak{J}$ if and only if $A|_Z:Z\to \overline{A(Z)}$ lies in $\mathfrak{J}$ for every separable subspace $Z$ of $X$. In this subsection, we use the complexity of the relations on $\mathcal{L}$ to show that whenever $\mathfrak{J}$ is a separably determined operator ideal so that  $\mathfrak{J}\cap \mathcal{L}$ is not coanalytic,   $\complement\mathfrak{J}$ does not admit a universal operator. There is an obvious problem that arises in using $\mathcal{L}$ to say anything about the non-existence of universal operators. In previous studies of universal operators, the operators were, in general, not restricted to having separable domain and range spaces.

\begin{lemma} Suppose $\mathfrak{J}$ is separably determined  operator ideal. If  $\complement \mathfrak{J}$ admits a universal factoring operator, then there exists $U\in \complement \mathfrak{J}\cap \mathcal{L}$ which factors through a restriction of every member of $\complement\mathfrak{J}\cap\mathcal{L}$.

Moreover, if we assume further that $\mathfrak{J}$ is injective, the converse holds.
\label{keep em separable}
\end{lemma}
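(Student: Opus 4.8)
The plan is to split into the two implications and handle each via an ultrapower / ultraproduct argument, using that $\mathfrak{J}$ is separably determined (and, for the converse, injective).

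\textbf{Forward direction.} Suppose $U_0 : E \to F$ is a universal factoring operator for $\complement\mathfrak{J}$; in particular $U_0 \in \complement\mathfrak{J}$. Since $\mathfrak{J}$ is separably determined, $U_0 \notin \mathfrak{J}$ is witnessed by some separable subspace $Z \leqslant E$: namely $U_0|_Z : Z \to \overline{U_0(Z)}$ fails to be in $\mathfrak{J}$. Set $U = U_0|_Z$; then $U \in \complement\mathfrak{J}\cap\mathcal{L}$ (after the harmless identification of $Z$ and $\overline{U_0(Z)}$ with subspaces of $C(2^{\mathbb{N}})$). Now take any $B \in \complement\mathfrak{J}\cap\mathcal{L}$. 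Then $B \in \complement\mathfrak{J}$, so by universality $U_0$ factors through $B$, say $U_0 = D B C$ with $C : E \to \mathrm{dom}(B)$, $D : \mathrm{ran}(B) \to F$. Restricting, $U = U_0|_Z = D B (C|_Z)$, which exhibits $U$ as factoring through $B$ (hence a fortiori through a restriction of $B$, taking the relevant subspaces to be all of $\mathrm{dom}(B)$ and $\mathrm{ran}(B)$). This gives the desired $U\in\complement\mathfrak{J}\cap\mathcal{L}$ factoring through a restriction of every member of $\complement\mathfrak{J}\cap\mathcal{L}$.

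\textbf{Converse direction (assuming $\mathfrak{J}$ injective).} Suppose $U \in \complement\mathfrak{J}\cap\mathcal{L}$ factors through a restriction of every member of $\complement\mathfrak{J}\cap\mathcal{L}$. We must show $U$ (or some operator built from it) factors through every $B : X \to Y$ in $\complement\mathfrak{J}$, where now $X,Y$ need not be separable. The idea is a standard separabilization: given $B \in \complement\mathfrak{J}$, since $\mathfrak{J}$ is separably determined there is a separable subspace $X_0 \leqslant X$ with $B|_{X_0} : X_0 \to \overline{B(X_0)}$ not in $\mathfrak{J}$; this restriction $B_0 := B|_{X_0}$ lies in $\complement\mathfrak{J}\cap\mathcal{L}$. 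By hypothesis $U$ factors through a restriction of $B_0$: there are subspaces $Z_1 \leqslant X_0$, $W_1 \leqslant \overline{B(X_0)}$ with $B_0(Z_1) \subseteq W_1$ and operators $C : \mathrm{dom}(U) \to Z_1$, $D : W_1 \to \mathrm{ran}(U)$ with $U = D (B_0|_{Z_1}) C$. Now $B_0|_{Z_1} = B|_{Z_1} : Z_1 \to W_1$ and $Z_1 \leqslant X$, $W_1 \leqslant Y$. To upgrade "factors through a restriction of $B$" to "factors through $B$" one uses injectivity of $\mathfrak{J}$ together with injectivity of $\ell_\infty$-type spaces: compose $D : W_1 \to \mathrm{ran}(U)$ with an isometric embedding $\mathrm{ran}(U) \hookrightarrow \ell_\infty(\Gamma)$, extend the resulting map on $W_1$ to all of $Y$ by injectivity of $\ell_\infty(\Gamma)$, and similarly absorb the domain side; the point of injectivity of $\mathfrak{J}$ is that it guarantees these manipulations do not move us inside $\mathfrak{J}$, and it lets one pass from the restricted operator $B|_{Z_1}$ back to $B$ in the factorization relation. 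One then checks $U$ (composed with the isometric embedding, which is harmless for factoring purposes) factors through $B$, so $U$ is $\complement\mathfrak{J}$-universal.

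\textbf{Main obstacle.} The delicate point is the converse: turning "factors through a restriction of $B$" into "factors through $B$" in the non-separable setting, which is exactly where injectivity of $\mathfrak{J}$ must be invoked. One has to be careful that the target of $U$ may not be complemented or injective, so the extension has to be done after composing with an embedding into an injective space, and then one must argue that the resulting operator is still a legitimate witness (i.e., still in $\complement\mathfrak{J}$ and still "the same" operator up to the equivalence relevant for universality). Managing the domain side symmetrically — where one cannot in general extend a map defined on a subspace $Z_1 \leqslant X$ to all of $X$ — requires instead precomposing with a projection or using that $\mathrm{dom}(U)$ can be chosen with enough structure; I expect the cleanest route is to note that universality is about $U$ factoring through $B$, and a restriction $B|_{Z_1}$ with $Z_1 \leqslant X$ already factors through $B$ via the inclusion $Z_1 \hookrightarrow X$ on the domain side and an injectivity-extension on the range side, so only the range-side extension genuinely needs injectivity of the codomain, while injectivity of $\mathfrak{J}$ keeps everything outside $\mathfrak{J}$.
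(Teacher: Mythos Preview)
Your proof is correct and follows essentially the same route as the paper: for the forward direction restrict the universal operator to a separable subspace witnessing membership in $\complement\mathfrak{J}$, and for the converse compose $U$ with an isometric embedding $i:Y\to\ell_\infty$, use injectivity of $\mathfrak{J}$ to ensure $iU\in\complement\mathfrak{J}$, then extend the range-side map via injectivity of $\ell_\infty$ (the domain side being handled by inclusion, exactly as you note). Your opening reference to an ``ultrapower / ultraproduct argument'' is a red herring---neither your actual argument nor the paper's uses any such tool, and you should drop that phrase.
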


\begin{proof}
Let $U':E' \to F'$ be $\complement \mathfrak{J}$-universal. Since $\mathfrak{J}$ is separably determined, there is a separable subspace $E$ of $E'$ so that $U'|_{E}:E \to F:=\overline{U'(E)}$ is in $\complement\mathfrak{J} \cap \mathcal{L}$. Let $U=U'|_{E}$. Then $U:E \to F$ is in $\mathcal{L}$ and it is easy see that $U$ factors through a restriction of every member of $\complement \mathfrak{J} \cap \mathcal{L}$.

Now we assume that $\mathfrak{J}$ is injective  and prove the converse. Let $U:X\to Y$ in $\complement \mathfrak{J} \cap \mathcal{L}$ and assume it factors through a restriction of every member of $\complement \mathfrak{J} \cap \mathcal{L}$. Fix  any isometric embedding $i:Y \to \ell_\infty$. Since $\mathfrak{J}$ is injective,  $iU \in \complement \mathfrak{J}$. Since $\ell_\infty$ is injective $iU$ factors through every operator in $\complement \mathfrak{J}$. Indeed, if $A:E\to F$ is in $\complement \mathfrak{J}$, then since $\mathfrak{J}$ is separably determined, there is a separable subspace $Z$ of $E$ such that $U$ factors through $A|_Z:Z \to \overline{A(Z}) \subset F$. Let $C:\overline{A(Z}) \to Y$ and $D:X \to Z$ be such that $U=CA|_{Z}D$. The map $iC$ can be extended to $\tilde{iC}:F \to \ell_\infty$ and $iU= \tilde{iC}AD$. This shows that $iU$ is universal for $\complement \mathfrak{J}$.
\end{proof}

The next theorem isolates the relationship between complexity of the ideal and the existence of universal operators.

\begin{theorem}
Let $\mathfrak{J}$ have the ideal property and suppose that $\mathfrak{J}\cap \mathcal{L}$ is not coanalytic. Then following hold:
\begin{enumerate}
    \item There is no Borel collection $\mathcal{B} \subset \complement \mathfrak{J}\cap \mathcal{L}$ such that for every $A \in \complement \mathfrak{J}\cap \mathcal{L}$, there is a $B\in \mathcal{B}$ such that $B$ factors through a restriction of $A$.
    \item If $\mathfrak{J}$ is separably determined then there is no $\complement \mathfrak{J}$-universal operator.
\end{enumerate}
\label{too complicated?}
\end{theorem}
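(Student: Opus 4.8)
The plan is to prove the two statements by contradiction, using the analyticity results from Proposition~\ref{lets have relations} together with Fact~\ref{boundedness} (or rather the basic structure theory: analytic $\cap$ coanalytic $=$ Borel). For $(1)$, suppose such a Borel collection $\mathcal{B} \subset \complement\mathfrak{J}\cap\mathcal{L}$ exists. I would then observe that an operator $A \in \mathcal{L}$ lies in $\complement\mathfrak{J}\cap\mathcal{L}$ if and only if there exists $B \in \mathcal{B}$ that factors through a restriction of $A$. The forward direction is the hypothesis on $\mathcal{B}$; the reverse direction is the ideal property of $\mathfrak{J}$ (if $B$ factors through a restriction $A|_{Z_1}: Z_1 \to W_1$ of $A$ and $B \notin \mathfrak{J}$, then $A|_{Z_1} \notin \mathfrak{J}$, and since $\mathfrak{J}$ has the ideal property one checks $A \notin \mathfrak{J}$ as well — concretely, $A = \iota_{W_1}^Y (A|_{Z_1}) P$ where $P$ is not needed, but rather: $A|_{Z_1}$ factors through $A$ via the inclusion $Z_1 \hookrightarrow X$ and the (formal, norm-one) map from $A(Z_1)$'s ambient perspective — more carefully, $A|_{Z_1}$ is $A$ precomposed with an inclusion, so if $A \in \mathfrak{J}$ then $A|_{Z_1} = A \circ (Z_1 \hookrightarrow X) \in \mathfrak{J}$, hence $B \in \mathfrak{J}$, a contradiction). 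Therefore $\complement\mathfrak{J}\cap\mathcal{L}$ is exactly the class of operators $A$ through a restriction of which some member of the Borel set $\mathcal{B}$ factors, which by Proposition~\ref{lets have relations}$(iii)$ is analytic. Hence $\complement\mathfrak{J}\cap\mathcal{L}$ is analytic, so $\mathfrak{J}\cap\mathcal{L}$ is coanalytic, contradicting the hypothesis.

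For $(2)$, suppose $\mathfrak{J}$ is separably determined and $\complement\mathfrak{J}$ admits a universal factoring operator. By Lemma~\ref{keep em separable}, there is $U \in \complement\mathfrak{J}\cap\mathcal{L}$ which factors through a restriction of every member of $\complement\mathfrak{J}\cap\mathcal{L}$. Then the singleton $\mathcal{B} = \{U\}$ is a Borel (indeed closed) subset of $\complement\mathfrak{J}\cap\mathcal{L}$ with the property required in $(1)$: for every $A \in \complement\mathfrak{J}\cap\mathcal{L}$, $U$ factors through a restriction of $A$. This contradicts part $(1)$, completing the proof.

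The main point to get right is the equivalence used in $(1)$ — that membership in $\complement\mathfrak{J}\cap\mathcal{L}$ is captured exactly by ``some member of $\mathcal{B}$ factors through a restriction.'' The nontrivial containment is that if a member $B$ of $\mathcal{B} \subset \complement\mathfrak{J}$ factors through a restriction of $A$, then $A \in \complement\mathfrak{J}$; this uses only the ideal property (not injectivity or separable determination), since restricting the domain and corestricting the range of $A$ is itself a two-sided composition with bounded operators (an isometric inclusion on the domain side, and the identity viewed as a norm-one map on the range side once one records that $A(Z_1) \subseteq W_1$), so $A \in \mathfrak{J}$ would force the restriction, hence $B$, into $\mathfrak{J}$. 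Once that is in place, everything else is a direct application of Proposition~\ref{lets have relations}$(iii)$ and Lusin's theorem (analytic $+$ coanalytic $\Rightarrow$ Borel, contrapositively: not coanalytic $\Rightarrow$ not analytic). I do not expect any genuine obstacle here; the work was done in setting up the Borel/analytic machinery in the previous subsection, and this theorem is the payoff.
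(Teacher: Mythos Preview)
Your proposal is correct and follows essentially the same approach as the paper: for (1) you invoke Proposition~\ref{lets have relations}(iii) to show that the set of operators through a restriction of which some member of $\mathcal{B}$ factors is analytic, and use the ideal property to identify this set with $\complement\mathfrak{J}\cap\mathcal{L}$, contradicting the hypothesis; for (2) you invoke Lemma~\ref{keep em separable} exactly as the paper does. The only cosmetic difference is that you reduce (2) to (1) by taking $\mathcal{B}=\{U\}$, whereas the paper simply reruns the analyticity argument directly for the singleton; these are equivalent.
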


\begin{proof}

Let $\mathfrak{J}$ have the ideal property so that $\mathfrak{J}\cap \mathcal{L}$ is not coanalytic.

We begin by proving item (1). Assume, towards a contradiction, that there is a Borel collection $\mathcal{B} \subset \complement  \mathfrak{J}\cap \mathcal{L}$, such that for every $A \in \complement \mathfrak{J}\cap \mathcal{L}$, there is a $B\in \mathcal{B}$ such that $B$ factors through a restriction of $A$. By Proposition \ref{lets have relations}(iii) the set of all operators $A \in \mathcal{L}$ so that there is a $B \in \mathcal{B}$ with $B$ factoring through a restriction of $A$ is analytic. On the other hand, using the ideal property of $\mathfrak{J}$, this set also coincides with the set $\complement \mathfrak{J}\cap \mathcal{L}$ which is assumed to not be analytic. 

Now assume $\mathfrak{J}$ is separably determined and, towards a contradiction, assume that $\complement \mathfrak{J}$ admits a universal factoring operator. Lemma \ref{keep em separable} yields that there is a $U \in \complement \mathfrak{J}\cap \mathcal{L}$ so that the collection of all operators $A \in \mathcal{L}$ so that $U$ factors through a restriction of $A$ is $\complement \mathfrak{J}\cap \mathcal{L}$. Again, by Proposition \ref{lets have relations}, this set is analytic. This is a contradiction.
\end{proof}



We will use Theorem \ref{too complicated?} to give an alternative proof of the result of Girardi and Johnson \cite{GirJo-Israel} that  $\complement \mathfrak{V}$ does not have a universal factoring operator. We also have the same result of the complement of the ideal of Dunford-Pettis operators $\mathfrak{DP}$. In order to proceed, we need the following theorem of O. Kurka \cite[Theorem 1.2]{Kurka-preprint}.

\begin{theorem}
The classes of all separable Banach spaces with the Schur property (=$\mbox{Space}(\mathfrak{V})$) and the Dunford Pettis property (=$\mbox{Space}(\mathfrak{DP})$) are $\Pi_1^2$-complete.
\label{Kurka}
\end{theorem}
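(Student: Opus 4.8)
The plan is to establish both halves of $\Pi^1_2$-completeness — membership in $\Pi^1_2$ and $\Pi^1_2$-hardness — for the class $\textbf{V}$ of Schur spaces and the class $\textbf{DP}$ of Dunford--Pettis spaces in parallel, since the same sequential descriptions drive both. For membership one uses the standard characterizations: $X\in\textbf{V}$ if and only if every weakly null sequence in $X$ is norm null, and $X\in\textbf{DP}$ if and only if $\lim_n f_n(x_n)=0$ whenever $(x_n)$ is weakly null in $X$ and $(f_n)$ is weakly null in $X^*$. The one delicate point is to express ``weakly null'' without quantifying over the possibly nonseparable dual: by Mazur's theorem a bounded sequence is weakly null if and only if for every $M\in[\mathbb{N}]$ and every $m\in\mathbb{N}$ the origin lies in the norm closure of $\text{conv}\{x_k:k\in M,\ k\geqslant m\}$, and this inner condition is Borel once evaluated on the canonical dense sequence $(d_n(X))$, so the universal real quantifier over $M$ makes ``$(x_n)$ weakly null'' a $\Pi^1_1$ predicate; the identical argument inside $X^*$, with $\|\cdot\|_{X^*}$ computed on $(d_n(X))$ and functionals coded by their values there, handles weak nullity in the dual. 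Consequently $\textbf{V}$ has the form $\forall(x_n)\,[\,\Sigma^1_1\vee\text{Borel}\,]$ and $\textbf{DP}$ has the form $\forall(x_n)\,\forall(f_n)\,[\,\Sigma^1_1\,]$, so both are $\Pi^1_2$.

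The substantial part, which I expect to be the main obstacle, is $\Pi^1_2$-hardness. I would fix a $\Pi^1_2$-complete set, say $\mathcal{H}=\{T\in Tr:p[T]=\mathbb{N}^{\mathbb{N}}\}$ for trees $T$ on $\mathbb{N}\times\mathbb{N}$, and build a Borel map $T\mapsto X_T\in\textbf{SB}$ with $X_T\in\textbf{V}$ (resp. $X_T\in\textbf{DP}$) if and only if $T\in\mathcal{H}$. The natural candidate for $X_T$ is a Tsirelson-type space built over $T$, in which a finite-dimensional block is attached to each node and the norm is given by $\ell_1$-averages over families of successive blocks declared admissible relative to $T$; this should be engineered so that a section tree $T_y$ which is well-founded — i.e. a point $y$ witnessing $p[T]\neq\mathbb{N}^{\mathbb{N}}$ — produces, by a gliding-hump argument along that section, a normalized weakly null sequence whose norm stays bounded below, forcing $X_T\notin\textbf{V}$, while if $p[T]=\mathbb{N}^{\mathbb{N}}$ a blocking analysis shows that any seminormalized weakly null sequence in $X_T$ would force some section to be well-founded, giving $X_T\in\textbf{V}$. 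The genuine difficulty is that the quantifier ``$\forall y$'' runs over the uncountable set $\mathbb{N}^{\mathbb{N}}$, so it cannot be simulated by an $\ell_1$-direct sum, which would fail to be separable; all sections must instead be encoded inside a single separable space while keeping the weakly-null/norm-null dichotomy sensitive to well-foundedness of a single section. For $\textbf{DP}$ one would run a parallel construction with the extra bookkeeping of a companion sequence in the dual that pairs nontrivially with the bad sequence precisely when some section is well-founded.

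Granting these two facts, the passage to operators is immediate: the space parts of $\mathfrak{V}\cap\mathcal{L}$ and $\mathfrak{DP}\cap\mathcal{L}$ are exactly $\textbf{V}$ and $\textbf{DP}$, so Lemma \ref{too hard} gives that $\mathfrak{V}\cap\mathcal{L}$ and $\mathfrak{DP}\cap\mathcal{L}$ are $\Pi^1_2$-hard, hence not coanalytic; since $\mathfrak{V}$ and $\mathfrak{DP}$ are separably determined operator ideals, Theorem \ref{too complicated?} then shows that $\complement\mathfrak{V}$ and $\complement\mathfrak{DP}$ admit no universal operator, recovering and strengthening the theorem of Girardi and Johnson.
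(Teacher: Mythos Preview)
The paper does not prove this theorem; it is quoted verbatim as a result of Kurka \cite[Theorem 1.2]{Kurka-preprint} and used as a black box in the proof of Theorem \ref{come at me bro}. So there is no ``paper's own proof'' to compare against.

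On your proposal itself: the $\Pi^1_2$ upper bound is essentially right and is the routine half. Your use of Mazur's theorem to express ``$(x_n)$ is weakly null'' as a $\Pi^1_1$ predicate (universal over $M\in[\mathbb{N}]$, Borel inside) is the standard move, and the resulting quantifier count lands both classes in $\Pi^1_2$.

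The hardness half, however, is not a proof but a plan, and you flag this yourself (``should be engineered'', ``Granting these two facts''). The key difficulty you identify --- encoding the universal quantifier over $y\in\mathbb{N}^{\mathbb{N}}$ inside a \emph{single} separable space rather than an uncountable direct sum --- is exactly the obstacle, and overcoming it is the substance of Kurka's paper. His construction does use Tsirelson-type norms, but the precise mechanism by which well-foundedness of a single section tree $T_y$ produces a bad weakly null sequence, while ill-foundedness of \emph{all} sections forces the Schur property, requires a delicate interaction between the tree combinatorics and the norming sets that your sketch does not supply. In particular, the ``gliding-hump along a well-founded section'' idea needs a concrete norm under which such a hump is simultaneously weakly null and bounded below, and the converse direction needs a structural lemma about arbitrary weakly null sequences in $X_T$. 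Neither is routine. If you want to write this up, you should either cite Kurka as the paper does, or reproduce his construction in detail; the outline as it stands does not constitute a proof of the hardness direction.

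Your final paragraph, on deducing Theorem \ref{come at me bro} from this result via Lemma \ref{too hard} and Theorem \ref{too complicated?}, matches the paper's argument exactly.
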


\begin{theorem}
There is no universal operator for $\complement \mathfrak{V}$ or $\complement \mathfrak{DP}$. Moreover, there is no Borel subset of $\complement \mathfrak{V}\cap \mathcal{L}$ or $\complement \mathfrak{DP}\cap \mathcal{L}$ which  is universal for $\complement \mathfrak{V}\cap \mathcal{L}$ or $\complement \mathfrak{DP}\cap \mathcal{L}$, respectively.
\label{come at me bro}
\end{theorem}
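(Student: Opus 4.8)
The plan is to apply Theorem \ref{too complicated?} directly. The two operator ideals $\mathfrak{V}$ (completely continuous) and $\mathfrak{DP}$ (Dunford--Pettis) are both closed operator ideals, hence have the ideal property, and both are readily checked to be separably determined: an operator $A:X\to Y$ is completely continuous (resp.\ Dunford--Pettis) if and only if its restriction to every separable subspace $Z\subset X$, viewed as an operator into $\overline{A(Z)}$, is completely continuous (resp.\ Dunford--Pettis), since the defining conditions involve only (weakly null) sequences, and every sequence in $X$ lies in a separable subspace, while weak nullity of a sequence in $Z$ is equivalent to weak nullity in $X$ for subspaces, and similarly the relevant weakly null sequences in $Y^*$ can be detected on $\overline{A(Z)}^*$. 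So both ideals satisfy the hypotheses of Theorem \ref{too complicated?} provided we can verify that $\mathfrak{V}\cap\mathcal{L}$ and $\mathfrak{DP}\cap\mathcal{L}$ are not coanalytic.

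The key step is precisely this complexity verification, and here I would invoke Kurka's theorem (Theorem \ref{Kurka}) together with Lemma \ref{too hard}. Kurka shows that $\mathrm{Space}(\mathfrak{V})$ and $\mathrm{Space}(\mathfrak{DP})$ are $\Pi^1_2$-complete subsets of $\textbf{SB}$. By Lemma \ref{too hard}, if $\mathrm{Space}(\mathcal{A})$ is $\Gamma$-complete in $\textbf{SB}$ then $\mathcal{A}$ is $\Gamma$-hard in $\mathcal{L}$; applying this with $\mathcal{A}=\mathfrak{V}\cap\mathcal{L}$ (noting $\mathrm{Space}(\mathfrak{V}\cap\mathcal{L})=\mathrm{Space}(\mathfrak{V})\cap\textbf{SB}$ via the Borel isomorphism $\Psi$ of Lemma \ref{cash money}) we conclude that $\mathfrak{V}\cap\mathcal{L}$ is $\Pi^1_2$-hard, and likewise $\mathfrak{DP}\cap\mathcal{L}$ is $\Pi^1_2$-hard. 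Since $\Pi^1_2$-hard sets are strictly more complex than coanalytic ($\Pi^1_1$) sets --- a $\Pi^1_2$-hard set cannot be $\Pi^1_1$, as that would make every $\Pi^1_2$ set $\Pi^1_1$ by Borel reduction, contradicting the properness of the projective hierarchy --- neither $\mathfrak{V}\cap\mathcal{L}$ nor $\mathfrak{DP}\cap\mathcal{L}$ is coanalytic.

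With this in hand the conclusion is immediate: Theorem \ref{too complicated?}(1) gives that there is no Borel collection $\mathcal{B}\subset\complement\mathfrak{V}\cap\mathcal{L}$ (resp.\ $\mathcal{B}\subset\complement\mathfrak{DP}\cap\mathcal{L}$) such that every element of $\complement\mathfrak{V}\cap\mathcal{L}$ (resp.\ $\complement\mathfrak{DP}\cap\mathcal{L}$) factors through a restriction of some member of $\mathcal{B}$; in particular no single Borel (indeed no singleton) universal subset exists, which is the second assertion of the theorem. Theorem \ref{too complicated?}(2), using that $\mathfrak{V}$ and $\mathfrak{DP}$ are separably determined, yields that there is no $\complement\mathfrak{V}$-universal operator and no $\complement\mathfrak{DP}$-universal operator, the first assertion.

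The main obstacle, and essentially the only nontrivial input beyond the machinery already developed, is the identification of the descriptive complexity of the spatial classes --- that is, Kurka's $\Pi^1_2$-completeness result, which we are citing as a black box. The remaining work is the routine but necessary bookkeeping: checking that $\mathfrak{V}$ and $\mathfrak{DP}$ are separably determined (so that Lemma \ref{keep em separable} and part (2) of Theorem \ref{too complicated?} apply), and checking that $\mathrm{Space}(\mathcal{A}\cap\mathcal{L})$ corresponds under $\Psi$ to $\mathrm{Space}(\mathcal{A})\cap\textbf{SB}$ so that the hardness transfers from $\textbf{SB}$ to $\mathcal{L}$ via Lemma \ref{too hard}. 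One subtlety worth a remark is that Lemma \ref{too hard} only needs $\Gamma$-completeness of the spatial class to deduce $\Gamma$-\emph{hardness} of the operator class, and $\Gamma$-hardness alone already precludes being coanalytic when $\Gamma=\Pi^1_2$; we do not need to pin down the exact complexity of $\mathfrak{V}\cap\mathcal{L}$ or $\mathfrak{DP}\cap\mathcal{L}$ (which is addressed separately in Theorem \ref{it's complicated}).
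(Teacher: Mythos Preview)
Your proof is correct and follows exactly the same route as the paper's: invoke Kurka's $\Pi^1_2$-completeness result (Theorem \ref{Kurka}), transfer hardness to $\mathcal{L}$ via Lemma \ref{too hard}, note that this precludes coanalyticity, and then apply both parts of Theorem \ref{too complicated?} using separable determinacy. One small slip: when you restate the conclusion of Theorem \ref{too complicated?}(1), you reversed the direction of the factorization (you wrote that every $A$ factors through a restriction of some $B\in\mathcal{B}$, whereas ``universal'' here means some $B\in\mathcal{B}$ factors through a restriction of $A$); this does not affect the argument since you are citing the theorem correctly by reference.
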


\begin{proof}
Note that both the ideals we are considering are separably determined. Therefore by combining Theorem \ref{Kurka}, Lemma \ref{too hard}, and Theorem \ref{too complicated?}(2) we conclude that there is no universal operator for $\complement \mathfrak{V}$ or $\complement \mathfrak{DP}$. The second statement follows from Theorem \ref{too complicated?}(1).
\end{proof}

\subsection{Bossard-genericity and Strong Boundedness for subsets of $\mathcal{L}$}

In this subsection we prove the results on Bossard genericity and strong boundedness advertised at the beginning of the section.

In order to proceed, we isolate the following definition that generalizes the Bourgain embeddibility index for spaces \cite{Bou-PAMS}.

\begin{definition}
Let $(X,Y,B) \in \mathcal{L}$ and $\mathfrak{NP}_{B}$ be the subset of $\mathcal{L}$ of all operators $A$ so that $B$ does not factor through a restriction of $A$ (here $\mathfrak{NP}_{B}$ stands for ``not preserving $B$''). Fix $K >0$ and $(V,W,A)\in \mathcal{L}$ and let 
\begin{equation}
\begin{split}
T(A,B,K)=  \{ (v_i)_{i=1}^n\in V^{<\nn}:& \forall (a_i)_{i=1}^n,~   \|\sum_{i=1}^n a_i v_i\|\leqslant K\|\sum_{i=1}^n a_i d_i(X)\|, \\ 
&\|\sum_{i=1}^n a_i Bd_i(X)\|\leqslant K\|\sum_{i=1}^n a_i Av_i\|\}
\end{split}
\end{equation}
  We let $\textbf{NP}(A,B, K)=o(T(A,B, K))$ if this is a countable ordinal, and we let $\textbf{NP}(A,B, K)=\omega_1$ otherwise.  We let $$\textbf{NP}(A,B)=\sup_{K>0} \textbf{NP}(A,B, K).$$
  \end{definition}
  
  The next proposition is a standard fact which follows from the fact that $T(A,B,K)$ are closed trees in $V$.
  
\begin{proposition}
 $(X,Y,B) \in \mathcal{L}$ factors through a restriction of $(V,W,A) \in \mathcal{L}$ if and only if  $\textbf{NP}(A,B, K)=\omega_1$
\end{proposition}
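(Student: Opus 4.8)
The plan is to prove both directions of the equivalence by unwinding the definitions, with the main work being the construction of a factorization from an infinite branch of $T(A,B,K)$.

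First I would prove the easy direction: if $(X,Y,B)$ factors through a restriction of $(V,W,A)$, then $\textbf{NP}(A,B,K) = \omega_1$ for some $K$. Suppose $B = D A|_{V_1} C$ where $C : X \to V_1 \subset V$ and $D : W_1 \to Y$ with $A(V_1) \subset W_1$. Put $v_i = C d_i(X)$. Then for all scalars, $\|\sum a_i v_i\| = \|C \sum a_i d_i(X)\| \leqslant \|C\| \|\sum a_i d_i(X)\|$, and $\|\sum a_i B d_i(X)\| = \|D \sum a_i A v_i\| \leqslant \|D\| \|\sum a_i A v_i\|$. Taking $K = \max(\|C\|, \|D\|, 1)$, every finite initial segment $(v_i)_{i=1}^n$ lies in $T(A,B,K)$, so $T(A,B,K)$ contains an infinite branch, hence is ill-founded, hence $\textbf{NP}(A,B,K) = \omega_1$ by the convention in the definition (and as recorded in the preceding discussion of ill-founded trees, where $o(T) = \infty$). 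Consequently $\textbf{NP}(A,B) = \omega_1$.

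For the converse, suppose $\textbf{NP}(A,B,K) = \omega_1$ for some $K > 0$; since $T(A,B,K)$ is a closed tree on $V$ (a Polish space) whose well-foundedness would force a countable ordinal rank by the tree-derivative analysis recalled in Section 2, being non-well-founded means $T(A,B,K)$ has an infinite branch $(v_i)_{i=1}^\infty \subset V$. The defining inequalities of $T(A,B,K)$ then say precisely that, setting $x_i := d_i(X)$, the map $x_i \mapsto v_i$ extends to a bounded operator $C : X \to V$ with $\|C\| \leqslant K$ (since $\{d_i(X)\}$ is dense in $X$ and the first inequality controls all rational, hence all real, linear combinations), and the map $A v_i \mapsto B x_i$ extends to a bounded operator $D_0 : \overline{\mathrm{span}}\{A v_i : i \in \nn\} \to Y$ with $\|D_0\| \leqslant K$ (by the second inequality). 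Let $V_1 = \overline{\mathrm{span}}\{v_i\} \subset V$ and $W_1 = \overline{A(V_1)} \subset W$; then $A|_{V_1} : V_1 \to W_1$, and $D_0$ is defined on $W_1$. Using injectivity of a suitable superspace or simply restricting, we have $D_0 (A|_{V_1}) C d_i(X) = D_0 A v_i = B x_i = B d_i(X)$ for all $i$, and by density and continuity $D_0 (A|_{V_1}) C = B$. Thus $B$ factors through the restriction $A|_{V_1}$ of $A$.

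The main obstacle I anticipate is bookkeeping around the coding $\mathcal{L}$: one must check that the formal operators obtained from the dense sequences $(d_i(X))$ and from $(v_i)$, $(Av_i)$ genuinely extend to bounded operators with the claimed factorization, i.e., that the passage from the "rational" inequalities in the definition of $\mathcal{L}$ and of $T(A,B,K)$ to honest bounded-linear statements goes through; this is routine but must be stated carefully. A minor point is that in the converse direction one should note that $A|_{V_1}$ maps into $W_1 = \overline{A(V_1)}$, which is where the restriction in the sense of Section 2 lives, and that $C$ indeed takes values in $V_1$. No descriptive-set-theoretic input beyond the tree-rank dichotomy is needed, so the proposition is essentially a translation lemma.
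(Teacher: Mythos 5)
Your proof is correct and is precisely the standard argument the paper alludes to when it says the proposition ``follows from the fact that $T(A,B,K)$ are closed trees in $V$''; the paper omits the details, and your reconstruction supplies them faithfully. Both directions are sound: the forward implication builds an explicit infinite branch from a factorization, and the converse recovers the factorization from a branch via the two defining inequalities of $T(A,B,K)$.

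Two small points worth tightening. First, the fact that a well-founded \emph{closed} tree on a Polish space has countable order is not literally among the tree-derivative material recalled in Section 2 (which concerns trees on an abstract set $\Lambda$); it is the Kunen--Martin-type boundedness result that the authors are invoking, and it is cleaner to cite it as such rather than gesture at Section 2. Second, the phrase ``using injectivity of a suitable superspace or simply restricting'' is an unnecessary hedge: since $C d_i(X) = v_i \in V_1$ for all $i$, $V_1$ is closed, and $\{d_i(X)\}$ is dense in $X$, the operator $C$ automatically maps $X$ into $V_1$, and the equality $\overline{A(V_1)} = \overline{\mathrm{span}}\{Av_i\}$ shows $D_0$ is already defined on all of $W_1$. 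The composition $D_0(A|_{V_1})C = B$ then follows by continuity and density with no appeal to injective superspaces. Finally, note that the statement as printed leaves $K$ unquantified; your reading (``there exists $K>0$ with $\textbf{NP}(A,B,K)=\omega_1$,'' equivalently $\textbf{NP}(A,B)=\omega_1$) is the intended one, consistent with the definition $\textbf{NP}(A,B)=\sup_{K>0}\textbf{NP}(A,B,K)$.
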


 We note that using standard arguments \cite{Do-Book}, we have the following proposition.
 
 \begin{proposition}
Let $(X,Y,B) \in \mathcal{L}$. Then $\mathfrak{NP}_B$ is $\Pi_1^1$ and $A \mapsto \textbf{NP}(A,B)$ is a $\Pi_1^1$-rank on $\mathfrak{NP}_{B}$.
 \label{rank}
 \end{proposition}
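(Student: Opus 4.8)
The plan is to separate the two assertions. The coanalyticity of $\mathfrak{NP}_B$ costs nothing new: since singletons are Borel in the standard Borel space $\mathcal{L}$, Proposition~\ref{lets have relations}(iii) applied with $\mathcal{B}=\{(X,Y,B)\}$ shows that the set of $A\in\mathcal{L}$ through a restriction of which $B$ factors is analytic, whence its complement $\mathfrak{NP}_B$ is $\Pi^1_1$. The substance of the statement is that $A\mapsto\textbf{NP}(A,B)$ is a $\Pi^1_1$-rank, and for this I would follow the standard template: realize $\mathfrak{NP}_B$ as a countable intersection of $WF$-preimages of Borel tree assignments, so that $\textbf{NP}(\cdot,B)$ becomes a supremum of the associated canonical ordinal ranks, and then invoke the standard $\Pi^1_1$-rank machinery (\cite{Do-Book}; see also the Fact recalled above).

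Concretely, with $(X,Y,B)$ fixed, for each $K$ I would regard the tree $T(A,B,K)$ of the definition as a closed subtree of $V^{<\nn}\subseteq C(2^\nn)^{<\nn}$, where the generic member of $\mathcal{L}$ is $(V,W,A)$. The crucial measurability fact is that $\{((v_i)_{i=1}^m,A)\in C(2^\nn)^{<\nn}\times\mathcal{L}:(v_i)_{i=1}^m\in T(A,B,K)\}$ is Borel: membership records (i) $v_i\in V$ for each $i$, i.e.\ $D_{v_i}(V)=0$, which is Borel by Lemma~\ref{ddbb}; (ii) the inequalities $\|\sum a_iv_i\|\le K\|\sum a_id_i(X)\|$, which — after reduction to rational $a_i$ — are closed in $(v_i)$, the $d_i(X)$ being fixed vectors; and (iii) the inequalities $\|\sum a_iBd_i(X)\|\le K\|\sum a_iAv_i\|$, which are Borel in $((v_i),A)$ because $(v,V,W,A)\mapsto Av$ is Borel by Lemma~\ref{ddbb}(i) and the $Bd_i(X)$ are fixed. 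By the usual Effros-selection arguments this makes $A\mapsto T(A,B,K)$ a Borel map into the standard Borel space of trees on $C(2^\nn)$; since the canonical order $o(\cdot)$ is a $\Pi^1_1$-rank on the well-founded such trees, the set $\mathfrak{NP}_{B,K}:=\{A:T(A,B,K)\text{ is well-founded}\}$ is $\Pi^1_1$ with $\Pi^1_1$-rank $A\mapsto o(T(A,B,K))=\textbf{NP}(A,B,K)$.

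By the proposition preceding this one, $\mathfrak{NP}_B=\bigcap_{K\in\nn}\mathfrak{NP}_{B,K}$ (integer $K$ suffice by monotonicity of $T(A,B,K)$ in $K$), so $\mathfrak{NP}_B$ is $\Pi^1_1$, and on it $\textbf{NP}(\cdot,B)=\sup_{K\in\nn}\textbf{NP}(\cdot,B,K)$. It then remains to invoke the standard fact that a supremum of $\Pi^1_1$-ranks is again a $\Pi^1_1$-rank on the intersection: concretely, $\mathfrak{NP}_B=g^{-1}(WF)$ for the Borel amalgamated tree $g(A)=\{\varnothing\}\cup\{(k)\smallfrown s:k\in\nn,\ s\in T(A,B,k)\}$, and the associated canonical rank $o(g(\cdot))$ agrees with $\textbf{NP}(\cdot,B)$ up to an inessential $+1$ and induces the required $\Pi^1_1$ comparison relations (see \cite{Do-Book}).

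The main obstacle will be the Borel bookkeeping for the tree assignment when the domain space $V=V(A)$ itself varies with $A$; this is exactly what Lemma~\ref{ddbb} and the Beer topology of Lemma~\ref{beer} are designed to resolve, and after reducing each defining inequality to rational scalars it becomes a closed, hence Borel, condition. The only remaining delicacy is the (inessential) matching of the supremum $\textbf{NP}(A,B)=\sup_K\textbf{NP}(A,B,K)$ to a genuine tree order; one may alternatively work throughout with trees on $\nn$ built from the dense sequences $(d_n(V))$, at the cost of a routine perturbation argument relating the dense-indexed trees to the original ones.
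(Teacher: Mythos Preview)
Your proposal is correct and is precisely the ``standard argument'' the paper alludes to: the paper gives no proof beyond the sentence ``We note that using standard arguments \cite{Do-Book}, we have the following proposition,'' and what you have written is exactly the expected unpacking of that citation --- Borelness of the tree assignment via Lemmas~\ref{beer} and~\ref{ddbb}, pullback of the canonical $\Pi^1_1$-rank $o(\cdot)$ on well-founded trees, and amalgamation over $K\in\nn$ to handle the supremum. Your closing remark about passing to trees on $\nn$ indexed by the dense sequence $(d_n(V))$ is the one genuinely needed bookkeeping step to match the Fact as stated (where $Tr$ is trees on a fixed countable set), and you have identified it correctly.
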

 
In order to prove that a given collection of operators is Bossard generic we must define operators that have a large index with respect to the rank $A \mapsto \textbf{NP}(A,id_{C(2^\mathbb{N})})$, but which are members of the given collection. We will consider the following operators between James tree spaces. These spaces have appeared several times in the literature, perhaps most notably in \cite{ADo-Advances}.

\begin{definition}
Consider the following James type spaces and canonical operators between them. Let us say a subset $\mathfrak{s}$ of $[\nn]^{<\nn}$ is a \emph{segment} provided that there exist $s,t\in [\nn]^{<\nn}\setminus\{\varnothing\}$ such that $\mathfrak{s}=\{u: s\preceq u\preceq t\}$. We say two segments $\mathfrak{s}, \mathfrak{t}$ are \emph{incomparable} if for any $s\in \mathfrak{s}$ and $t\in \mathfrak{t}$, $s\not\preceq t$ and $t\not\preceq s$.  We recall that $|t|$ denotes the length of $t$. 

Let $(y_i)_{i=1}^\infty$ be a fixed Schauder basis for a Banach space $Y$.  For $1\leqslant p<\infty$, let  $\mathfrak{X}_p((y_i)_{i=1}^\infty)$ be the completion of $c_{00}([\nn]^{<\nn}\setminus\{\varnothing\})$ with respect to the norm $$\|\sum_{t\in \nn^{<\nn}} a_t e_t\| = \sup\Bigl\{\Bigl(\sum_{i=1}^n \|\sum_{t\in \mathfrak{s}_i} a_t y_{|t|}\|_Y^p\Bigr)^{1/p}: n\in \nn, \mathfrak{s}_1, \ldots, \mathfrak{s}_n\text{\ are pairwise incomparable segments}\Bigr\}.$$  For each tree $T$ on $\nn$, let $\mathfrak{X}^T_p((y_i)_{i=1}^\infty)$ denote the closed span of $\{e_t: t\in T\setminus \{\varnothing\}\}$ in $\mathfrak{X}_p((y_i)_{i=1}^\infty)$. 
\end{definition}

We would like to know that spaces $\mathfrak{X}^T_p((y_i)_{i=1}^\infty)$ for well-founded trees $T$ retain some desired property independent of the sequence $(y_i)$.

\begin{proposition} Fix $1\leqslant p< \infty$.  Assume also that ($P$) is a property such that: \begin{enumerate}[(i)]\item If $X$ has property ($P$) and $Z$ is isometrically isomorphic to $X$, then $Z$ has property ($P$). \item If $X$ is a Banach space and $Z\leqslant X$ is a subspace such that $\dim X/Z<\infty$ and $Z$ has ($P$), then $X$ has ($P$), \item If for each $n\in \nn$, $X_n$ is a Banach space with property ($P$), then $(\oplus_{n=1}^\infty X_n)_{\ell_p}$ has property ($P$), \item Every finite dimensional space has property ($P$). \end{enumerate}  Then for any well-founded tree $T$ on $\nn$  and any Schauder basis $(y_i)_{i=1}^\infty$, $\mathfrak{X}^T_p((y_i)_{i=1}^\infty)$ has property ($P$). 
\label{induct}
\end{proposition}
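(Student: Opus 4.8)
The natural strategy is transfinite induction on the order $o(T)$ of the well-founded tree $T$, exploiting the self-similar structure of the James tree spaces. The base case is $o(T)\leqslant 1$: then $T\subset\{\varnothing\}\cup\{(n):n\in\nn\}$, so $\mathfrak{X}^T_p((y_i))$ has the form of an $\ell_p$-sum of one-dimensional spaces $\overline{\text{span}}\{e_{(n)}\}$ (or is trivial), which has ($P$) by (iii) and (iv). For the inductive step, suppose the claim holds for all well-founded trees of order strictly less than $o(T)$, and fix $T$ with $o(T)=\alpha$.

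The key structural observation I would use is the decomposition of $\mathfrak{X}^T_p((y_i))$ according to the first coordinate. For each $n\in\nn$ with $(n)\in T$, let $T_n=\{s:(n)\smallfrown s\in T\}=T((n))$ be the subtree hanging below the node $(n)$; each $T_n$ is well-founded with $o(T_n)<o(T)$ (since passing to $T'$ strips the maximal nodes and each $T_n$ ``sits one level deeper''). The subspace $\mathfrak{X}^{T_n}_p$ here should be understood with the shifted basis $(y_{i+1})_{i=1}^\infty$ of $[y_i:i\geqslant 2]$, because a node $t$ with first coordinate $n$ contributes $y_{|t|}$ and the relative length inside the subtree below $(n)$ is $|t|-1$. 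I would first argue that the segment-$\ell_p$ norm on $\mathfrak{X}^T_p$, when restricted to vectors supported on $\{t:t\succ(n)\text{ or }t=(n)\}$ for a fixed $n$, splits as an $\ell_p$-combination of a one-dimensional piece (the $e_{(n)}$ direction) and the $\mathfrak{X}^{T_n}_p$-norm — because pairwise incomparable segments can meet the node-$n$ subtree only within that subtree, and segments through $(n)$ from different first coordinates are automatically incomparable. Hence one obtains an isometry $\mathfrak{X}^T_p((y_i)_{i=1}^\infty)\cong\bigl(\oplus_{n}\mathfrak{X}^{\widehat{T_n}}_p((y_{i+1})_{i=1}^\infty)\bigr)_{\ell_p}$, where $\widehat{T_n}$ denotes $T_n$ with a node adjoined for the $e_{(n)}$-coordinate, equivalently the tree $\{\varnothing\}\cup\{(1)\smallfrown s:s\in T_n\}$, whose order is still $<o(T)$. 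Each summand has ($P$) by the induction hypothesis (applied with the basis $(y_{i+1})$, noting the hypotheses on ($P$) are basis-independent); then (iii) gives ($P$) for the $\ell_p$-sum and (i) transfers it across the isometry.

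The main obstacle, and the step I would spend the most care on, is verifying that the norm really does decompose isometrically as claimed — i.e. that no ``diagonal'' segment crossing between different first-coordinate subtrees can give a larger value than the split norm allows. This requires checking: (a) a single segment $\mathfrak s=\{u:s\preceq u\preceq t\}$ with $|s|\geqslant 1$ lives entirely in one first-coordinate subtree (true since all $u\in\mathfrak s$ extend $s$, hence share $s$'s first coordinate); (b) therefore any finite family of pairwise incomparable segments partitions according to first coordinate, and the $p$-th power of the sup-norm is the sum over first coordinates of the corresponding sub-sums, each of which is dominated by the $\mathfrak{X}^{T_n}_p$-norm of the restriction; and (c) conversely, optimal segment families for each subtree can be amalgamated since cross-subtree segments are incomparable. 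Properties (ii) and (iv) are not needed for well-founded $T$ under this decomposition — (ii) is presumably in the hypothesis list because the authors also want the proposition to apply to $T$ with $o(T)$ a successor via a finite-codimensional reduction, or because some $\widehat{T_n}$ adjoining construction is handled by an initial finite-dimensional perturbation; I would include a remark that if one prefers to avoid the adjoined-node bookkeeping, one can instead write $\mathfrak{X}^T_p$ as the $\ell_p$-sum of $\overline{\text{span}}\{e_{(n)}:(n)\in T\}$ — a subspace of $\ell_p$, hence with ($P$) — and the sum of the genuine $\mathfrak{X}^{T_n}_p$'s, but this is not literally an $\ell_p$-decomposition of $\mathfrak{X}^T_p$ itself, so the clean isometry above is preferable. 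Finally I would note the induction is legitimate because $o(T_n)<o(T)$ for every relevant $n$, so at limit stages of $\alpha$ there is nothing extra to do: every $\widehat{T_n}$ already has strictly smaller order.
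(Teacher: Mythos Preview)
Your approach---transfinite induction on $o(T)$ together with an $\ell_p$-decomposition by first coordinate---is exactly what the paper does. One minor correction: $o(T)\leqslant 1$ actually forces $T\subset\{\varnothing\}$ (if some $(n)\in T$ then $\varnothing\prec(n)$ shows $\varnothing\in T'$, hence $o(T)\geqslant 2$), so the base case is simply $\mathfrak{X}^T_p=\{0\}$; the paper writes the inductive decomposition as $\mathfrak{X}^T_p((y_i))=\bigl(\oplus_n\mathfrak{X}^{T_n}_p((y_i))\bigr)_{\ell_p}$ with $T_n=\{t\in T:(n)\preceq t\}$ and the \emph{same} basis $(y_i)$, which sidesteps your shifted-basis and adjoined-node bookkeeping but is the same idea once one unwinds the isometries.
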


\begin{proof} We prove by induction on $\xi<\omega_1$ that if $o(T)\leqslant \xi+1$, $\mathfrak{X}^T_p((y_i)_{i=1}^\infty)$ has property ($P$). If $o(T)\leqslant 1$, then $T\subset \{\varnothing\}$ and  $\mathfrak{X}^T_p((y_i)_{i=1}^\infty)=\{0\}$  and therefore has property ($P$). 

Assume that $\xi$ is a limit ordinal and we have the result for every $\zeta<\xi$.  Fix a tree $T$ with $o(T)\leqslant \xi+1$.  Then for each $n\in \nn$, let $T_n=\{t\in T: (n)\preceq t\}$ and note that $o(T_n)<\xi$ for each $n\in \nn$.    From this it follows that $\mathfrak{X}^{T_n}_p((y_i)_{i=1}^\infty)$ has property ($P$), and so does $$\mathfrak{X}^T_p((y_i)_{i=1}^\infty) = (\oplus_{n=1}^\infty \mathfrak{X}^{T_n}_p((y_i)_{i=1}^\infty))_{\ell_p}.$$

This is the desired result.
\end{proof}

\begin{proposition} Fix a Schauder basis $(y_i)_{i=1}^\infty$,  a well-founded tree $T$ on $\nn$, and $1\leqslant p<\infty$. Then $\mathfrak{X}^T_1((y_i)_{i=1}^\infty)$ has the Schur property, $\mathfrak{X}^T_p((y_i)_{i=1}^\infty)$ is  $\ell_p$ saturated, and $\mathfrak{X}^T_2((y_i)_{i=1}^\infty)$, has the Banach-Saks property.  

In particular, the canonical inclusion $I^{1,2}_T:\mathfrak{X}^T_1((y_i)_{i=1}^\infty)\to \mathfrak{X}^T_2((y_i)_{i=1}^\infty)$ is completely continuous, weakly compact, and $\mathcal{S}_1$-$\mathfrak{S}$. 
\label{yeah okay}
\end{proposition}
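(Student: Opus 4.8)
The plan is to deduce the three structural statements about $\mathfrak{X}^T_p((y_i)_{i=1}^\infty)$ directly from Proposition \ref{induct}, by verifying that each of ``having the Schur property'' (for $p=1$), ``being $\ell_p$-saturated'' (for general $p$), and ``having the Banach--Saks property'' (for $p=2$) is a property $(P)$ of the kind handled there. That is, I would check for each of these properties the four hypotheses (i)--(iv) of Proposition \ref{induct}: invariance under isometric isomorphism, passing from a finite-codimensional subspace to the whole space, stability under $\ell_p$-sums, and holding trivially for finite-dimensional spaces. For the Schur property and $p=1$: (i) and (iv) are immediate; for (ii) one notes that if $Z \leqslant X$ has finite codimension and $(x_n)$ is weakly null in $X$, then writing $x_n = z_n + f_n$ with $z_n \in Z$ and $f_n$ in a fixed finite-dimensional complement, $(f_n)$ converges in norm (finite dimensions) and $(z_n)$ is weakly null in $Z$ hence norm-null, so $(x_n)$ is norm-null; for (iii) one uses the standard gliding-hump argument that a weakly null sequence in an $\ell_1$-sum of Schur spaces, after perturbation, splits into a block sequence whose blocks lie in finitely many coordinates, reducing to the Schur property of each summand. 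For $\ell_p$-saturation: (i), (iv) are clear; (ii) holds because a finite-codimensional subspace of an $\ell_p$-saturated space is again $\ell_p$-saturated (an infinite-dimensional subspace of $Z$ is an infinite-dimensional subspace of $X$); and (iii) follows from the fact that any infinite-dimensional subspace of $(\oplus_n X_n)_{\ell_p}$ either meets one summand in an infinite-dimensional subspace (then use saturation of that $X_n$) or contains an almost-block sequence, which is equivalent to the $\ell_p$-basis and hence spans a copy of $\ell_p$. For the Banach--Saks property and $p=2$: (i), (iv) are trivial; (ii) is the same finite-codimensional perturbation argument as for Schur (a bounded sequence splits as a convergent finite-dimensional part plus a bounded sequence in $Z$, and Cesàro-convergence is preserved under adding a norm-convergent sequence); and (iii) is the classical fact that an $\ell_2$-sum of Banach--Saks spaces is Banach--Saks (indeed $\ell_2$-sums preserve the Banach--Saks property — this is where $p=2$ is essential, e.g. via the uniform convexity-type estimate or Beauzamy's characterization).

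Once these verifications are in place, Proposition \ref{induct} gives the first sentence of the statement verbatim. For the ``in particular'' clause, I would argue as follows. The canonical inclusion $I^{1,2}_T$ has domain $\mathfrak{X}^T_1((y_i)_{i=1}^\infty)$, which has the Schur property, so every weakly null sequence in the domain is already norm-null; hence $I^{1,2}_T$ is trivially completely continuous (indeed any operator out of a Schur space is completely continuous). It is weakly compact because its range space $\mathfrak{X}^T_2((y_i)_{i=1}^\infty)$ is $\ell_2$-saturated and has the Banach--Saks property; more directly, a Banach--Saks space is reflexive (a Banach--Saks space is weakly sequentially complete and contains no copy of $c_0$; combined with $\ell_2$-saturation it contains no copy of $\ell_1$ either, hence is reflexive by Rosenthal/James), so any operator into it is weakly compact. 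Finally, to see $I^{1,2}_T \in \mathcal{S}_1$-$\mathfrak{S}$, I would use the characterization recalled just before Theorem \ref{the dichotomy} together with the Banach--Saks property of the range: for any normalized basic sequence $(x_n)$ in $\mathfrak{X}^T_1((y_i))$, the images $(I^{1,2}_T x_n)$ form a bounded sequence in a Banach--Saks space, so some subsequence $(I^{1,2}_T x_{n_k})$ is Cesàro-summable, and by passing to an appropriate block of the Cesàro averages indexed by a set in $\mathcal{S}_1$ one produces, for each $\varepsilon > 0$, a set $(n_i)_{i=1}^\ell \in \mathcal{S}_1$ and scalars with $\|\sum a_i I^{1,2}_T x_{n_i}\| < \varepsilon \|\sum a_i x_{n_i}\|$ — this is exactly the statement that $\mathcal{S}_1$-Banach--Saks operators are $\mathcal{S}_1$-$\mathfrak{S}$, which is the identification $\mathcal{S}_1$-$\mathfrak{W}=\mathfrak{BS}^1$ together with $\mathfrak{BS}^\xi = \mathfrak{W} \cap \mathcal{S}_\xi$-$\mathfrak{Sl}_1 \subset \mathcal{S}_\xi$-$\mathfrak{S}$ recalled in the text; alternatively, weak compactness already gives that $I^{1,2}_T$ maps the unit ball into a weakly compact set, and Schur on the domain makes every normalized basic sequence's image contain a norm-null convex block, which witnesses $\mathcal{S}_1$-strict singularity directly.

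The main obstacle I anticipate is the verification of hypothesis (iii) — stability under $\ell_p$-sums — for each of the three properties, since this is the only step requiring genuine (if classical) Banach-space arguments rather than bookkeeping: the gliding-hump splitting of a weakly null (resp.\ bounded, resp.\ arbitrary infinite-dimensional) sequence in an $\ell_p$-sum into a perturbation of a successive block sequence, and then checking that the property transfers. For Schur and for $\ell_p$-saturation these are routine; the slightly delicate one is the Banach--Saks property of an $\ell_2$-sum, which is exactly why the middle exponent is pinned to $p=2$, and I would either cite Beauzamy's work (as the paper does for $\mathfrak{BS}$ having the space factorization property, \cite{Beauzamy-Seminar}) or give the short quantitative argument using that an $\ell_2$-combination of ``almost invariant'' averages in the summands still has small norm. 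A secondary point to be careful about is that the segments defining the norm of $\mathfrak{X}^T_p$ are required to be \emph{nonempty} finite sequences, so the decomposition $\mathfrak{X}^T_p((y_i)) = (\oplus_{n=1}^\infty \mathfrak{X}^{T_n}_p((y_i)))_{\ell_p}$ used implicitly in the proof of Proposition \ref{induct} holds isometrically (a segment either lies entirely in one $T_n$ or, if it starts at a node of length $1$, still lies in a single $T_n$), which should be remarked on but is essentially immediate from the definitions.
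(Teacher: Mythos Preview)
Your proposal is correct and follows essentially the same route as the paper: invoke Proposition~\ref{induct} for each of the three properties, then read off the ``in particular'' clause from the ideal property. The paper's argument for $I^{1,2}_T\in\mathcal{S}_1\text{-}\mathfrak{S}$ is slightly more direct than yours: it uses the $\ell_1$-saturation of the domain (the $p=1$ case of the saturation claim, or equivalently Schur plus Rosenthal) to pass from any normalized basic sequence to a subsequence equivalent to the $\ell_1$ basis, and then observes that the Banach--Saks range admits no $\ell_1^1$ spreading model, so the required $\mathcal{S}_1$-infimum is zero. One caution: your ``alternatively'' sentence at the end misfires --- the Schur property of the domain says weakly null sequences are norm-null, so a \emph{normalized} basic sequence in a Schur space is never weakly null and you cannot extract norm-null convex blocks from it that way; drop that alternative and keep your main argument (or the paper's $\ell_1$-saturation version).
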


\begin{proof}
The first claim follows from the fact that the Schur property satisfies the hypotheses of Proposition \ref{induct} when $p=1$.

The second claim follows from the fact that for any $1\leqslant p<\infty$, the property of being $\ell_p$ saturated satisfies the hypotheses of Proposition \ref{induct}. Here we remark that every finite dimensional space is vacuously $\ell_q$ saturated for any $1\leqslant q<\infty$.

The third claim follows from the fact that the Banach-Saks property satisfies the hypotheses of Proposition \ref{induct} when $1<p<\infty$.  
Since the identity on $\mathfrak{X}^T_1((y_i)_{i=1}^\infty)$ is completely continuous, so is $I_T^{1,2}$. Since the identity on $\mathfrak{X}^T_2((y_i)_{i=1}^\infty)$ has the Banach-Saks property,  so does $I^{1,2}_T$.  Finally, if $(x_i)_{i=1}^\infty \subset \mathfrak{X}^T_1((y_i)_{i=1}^\infty)$ is normalized and basic, then after passing to a subsequence, we may assume $(x_i)_{i=1}^\infty$ is equivalent to the $\ell_1$ basis. Then since $\mathfrak{X}^T_2((y_i)_{i=1}^\infty)$ has the Banach-Saks property, it does not admit an $\ell_1$ spreading model, which means $$\inf_{E\in \mathcal{S}_1} \min_{\|(a_i)_{i\in E}\|_{\ell_1}=1} \|I_T\sum_{i\in E} a_i x_i\|_{\mathfrak{X}_2^T((y_i)_{i=1}^\infty)}=0.$$ This yields that $I_T^{1,2}$ is $\mathcal{S}_1$-$\mathfrak{S}$. 
\end{proof}

The following is the proof of Theorem \ref{lots of stuff} and the generic statements in Theorem \ref{it's complicated}

\begin{proof}
Let $\mathfrak{J}:=\mathfrak{W}\cap \mathfrak{V} \cap \mathcal{S}_1\mbox{-}\mathfrak{S}$. We claim that $\mathfrak{J}$ is Bossard generic. This will give the desired result,  since $\mathfrak{J}$ is a subset of each of the classes from Theorem \ref{lots of stuff} and Theorem \ref{it's complicated}. Let $\mathcal{A} \subset \mathcal{L}$ be analytic and contain an isomorphic copy of every element of $\mathfrak{J}$. Then by Lemma \ref{keep em saturated} the isomorphic saturation $\mathcal{A}_{\cong}$ of $\mathcal{A}$ is analytic. Recall that our goal is to show that there is a $A \in \mathcal{A}$ so that every $B \in \mathcal{L}$, $B$ factors through a restriction of $A$. 

Towards a contradiction,  we assume that $\mathcal{A}_{\cong}$ is a subset of $\mathfrak{NP}_{id_{C(2^\mathbb{N})}}$. Since $\mathcal{A}_{\cong}$ is analytic, by Proposition \ref{rank} the map 
$$\mathcal{L}\ni (X,Y,A) \mapsto \mathbf{NP}(A,id_{C(2^\nn)})$$
is a $\Pi_1^1$-rank on $\mathfrak{NP}_{id_{C(2^\mathbb{N})}}$.
By Fact \ref{boundedness}, there exists $\zeta<\omega_1$ such that for each $B \in \mathcal{A}_{\cong}$, $\mathbf{NP}(B,id_{C(2^\mathbb{N})}) < \zeta$. Let $T_\zeta$ be a tree on $\mathbb{N}$ with $o(T_\zeta)> \zeta$. 
Consider the operator $I^{1,2}_{T_\zeta}:\mathfrak{X}^{T_{\zeta}}_1((e_i)_{i=1}^\infty)\to \mathfrak{X}^{T_{\zeta}}_2((e_i)_{i=1}^\infty)$ where $(e_i)$ is a Schauder basis of $C(2^\mathbb{N})$. By Proposition \ref{yeah okay}, the operators $I^{1,2}_{T_\zeta}$ is in the ideal $\mathfrak{W}\cap \mathfrak{V} \cap \mathcal{S}_1\mbox{-}\mathfrak{S}$, and therefore any realization of $I^{1,2}_{T_\zeta}:\mathfrak{X}^{T_\zeta}_1((y_i)_{i=1}^\infty\to \mathfrak{X}^{T_\zeta}_2((y_i)_{i=1}^\infty)$ as a member of $\mathcal{L}$  is in $\mathcal{A}_{\cong}$. Here we use that we are considering the isomorphic saturation of $\mathcal{A}$ and just just $\mathcal{A}$. On the other hand, $\mathbf{NP}(I^{1,2}_{T_\zeta},id_{C(2^\mathbb{N})})>\zeta$. Indeed, for any $t\in {T_{\zeta}}$, consider $(e_s)_{\varnothing\prec s\preceq t}\subset c_{00}(\nn^{<\nn}\setminus \{\varnothing\})$.   
Then since for any pairwise incomparable segments $\mathfrak{s}_1, \ldots, \mathfrak{s}_n$, $\mathfrak{s}_i\cap \{u: \varnothing\prec u\preceq t\}\neq \varnothing$ for at most one value of $i$, $$\|\sum_{i=1}^{|t|} a_i e_i\| \leqslant \|\sum_{\varnothing\prec s\preceq t} a_{|s|}  e_s\|_{\mathfrak{X}^{T_{\zeta}}_p((e_i)_{i=1}^\infty)}  \leqslant 2b \|\sum_{i=1}^{|t|} a_i e_i\|$$ for any $1\leqslant p<\infty$. Here, $b$ is the basis constant of $(e_i)_{i=1}^\infty$ in $C(2^\nn)$. 
This shows that the map $t\mapsto ((2b)^{-1}e_s)_{\varnothing\prec s\preceq t}$, $\varnothing\mapsto \varnothing$ is a tree isomorphism of ${T_{\zeta}}$ into $$T_{\mathfrak{S}C(2^\nn)}(I^{1/2}_{T_\zeta},\mathfrak{X}^{T_{\zeta}}_1((y_i)_{i=1}^\infty),\mathfrak{X}^{T_{\zeta}}_2((y_i)_{i=1}^\infty), 2b),$$ 
and $r_{\mathfrak{S}C(2^\nn)}(\mathfrak{X}^{T_{\zeta}}_1((y_i)_{i=1}^\infty),\mathfrak{X}^{T_{\zeta}}_2((y_i)_{i=1}^\infty), I^{1,2}_{T_\zeta})>\zeta$. This contradiction yields the desired result.
\end{proof}

We can now prove our spatial results.

\begin{proof}[Proof of Theorem \ref{spatial}]
Let $\textbf{C}$ be any one of the classes $\textbf{NC}_E$, $\textbf{DP}$, $\textbf{V}$ or $\textbf{BS}$. If $\textbf{C}=\textbf{DP}$ or $\textbf{C}=\textbf{V}$, let $p=1$. If $\textbf{C}=\textbf{BS}$, let $p=2$. If $\textbf{C}=\textbf{NC}_E$, fix any $1<p<\infty$ such that $E$ is not $\ell_p$ saturated.  

Let $(e_i)$ be a Schauder basis of $C(2^\mathbb{N})$ and note that for any well-founded tree $T$ on $\nn$, $\mathfrak{X}^T_p((e_i)_{i=1}^\infty)$ lies in $\textbf{C}$ by Proposition \ref{yeah okay}.  We argue as in the previous theorem that  $$r_{\mathfrak{S}C(2^\nn)}(\mathfrak{X}^{T}_p((e_i)_{i=1}^\infty), \mathfrak{X}_p^T((e_i)_{i=1}^\infty), I_{\mathfrak{X}_p^T((e_i)_{i=1}^\infty)})\geqslant o(T).$$  Since we may find well-founded trees $T$ on $\nn$ with arbitrarily large, countable order, we deduce by a standard overspill argument that any separable space containing isomorphs of all $\mathfrak{X}_p^T((e_i)_{i=1}^\infty)$, $T\subset \nn$ well-founded, must have uncountable $r_{\mathfrak{S}C(2^\nn)}$ index and therefore contain an isomorph of $C(2^\nn)$. 
\end{proof}


We now observe that Bossard-genercity of a collection of space implies that corresponding operator ideal is Bossard generic. This yields an alternative proof for the spatial operators ideals found in Theorem \ref{lots of stuff}.

\begin{proposition}
Let $\mathfrak{J} \subset \mathcal{L}$ have the ideal property and suppose that $\mbox{Space}(\mathfrak{J})=\{X \in \textbf{SB}:(X,X,(d_n(X)))\in \mathfrak{J}\}$ is Bossard-generic in $\textbf{\emph{SB}}$. Then $\mathfrak{J}$ is Bossard-generic in $\mathcal{L}$
\label{weak sauce}
\end{proposition}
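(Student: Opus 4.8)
The plan is to reduce to the spatial statement via the Borel isomorphism $\Psi\colon\textbf{SB}\to\textbf{Sp}\subset\mathcal{L}$ of Lemma \ref{cash money}. Fix an analytic set $\mathcal{A}\subset\mathcal{L}$ which contains an isomorphic copy of every member of $\mathfrak{J}$; we must produce $(X',Y',A')\in\mathcal{A}$ through a restriction of which every member of $\mathcal{L}$ factors. By Lemma \ref{keep em saturated} the isomorphic saturation $\mathcal{A}_{\cong}$ is analytic, and since $\textbf{Sp}$ is Borel in $\mathcal{L}$, the image $A_0:=\Psi^{-1}(\mathcal{A}_{\cong}\cap\textbf{Sp})$ of the analytic set $\mathcal{A}_\cong\cap\textbf{Sp}$ under the Borel map $\Psi^{-1}$ is an analytic subset of $\textbf{SB}$.

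The first key step is to observe that $A_0\supset\mbox{Space}(\mathfrak{J})$. Indeed, if $X\in\mbox{Space}(\mathfrak{J})$ then $\Psi(X)=(X,X,(d_n(X)))$ is precisely the coded identity $id_X$, which lies in $\mathfrak{J}$; so by hypothesis there is $(Z,W,B)\in\mathcal{A}$ with $(Z,W,B)\cong\Psi(X)$, whence $\Psi(X)\in\mathcal{A}_{\cong}\cap\textbf{Sp}$ and $X\in A_0$. Now the hypothesis that $\mbox{Space}(\mathfrak{J})$ is Bossard generic in $\textbf{SB}$ applies to the analytic set $A_0$ (which even contains each $X\in\mbox{Space}(\mathfrak{J})$ on the nose, hence an isomorphic copy of each such $X$), and yields a space $Y\in A_0$ into which every member of $\textbf{SB}$ — in particular $C(2^{\mathbb{N}})$ — embeds isomorphically.

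Unwinding, $Y\in A_0$ means $\Psi(Y)\in\mathcal{A}_{\cong}$, so we may fix $(X',Y',A')\in\mathcal{A}$ with $A'\cong id_Y$. By the definition of isomorphism of operators, $A'\colon X'\to Y'$ is then an isomorphism onto $Y'$ with $X'\cong Y\cong Y'$. It remains to check that every $B\in\mathcal{L}$ factors through a restriction of $A'$, which exhibits $(X',Y',A')$ as the operator required by Bossard genericity of $\mathfrak{J}$. To see this, fix a subspace $Z_1$ of $X'$ isomorphic to $C(2^{\mathbb{N}})$ (available since $C(2^{\mathbb{N}})$ embeds into $X'\cong Y$); then $A'|_{Z_1}\colon Z_1\to A'(Z_1)$ is an isomorphism between two spaces isomorphic to $C(2^{\mathbb{N}})$, so $A'|_{Z_1}\cong id_{C(2^{\mathbb{N}})}$. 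Since every $B\in\mathcal{L}$ factors through a restriction of $id_{C(2^{\mathbb{N}})}$, and the relation ``factors through a restriction of'' is invariant under $\cong$ (conjugate the witnessing subspaces and maps by the isomorphisms realizing $A'|_{Z_1}\cong id_{C(2^{\mathbb{N}})}$, exactly the diagram chase underlying Lemma \ref{keep em saturated}), $B$ factors through a restriction of $A'|_{Z_1}$; and a restriction of $A'|_{Z_1}$ is again a restriction of $A'$.

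The only point needing care is the bookkeeping in the last paragraph: one must verify that the subspaces $Z_2\subset Z_1\subset X'$ and $W_2\subset A'(Z_1)\subset Y'$ obtained by composing the restriction $A'|_{Z_1}$ with the factorization of $B$ through a restriction of $id_{C(2^{\mathbb{N}})}$ genuinely realize $B$ as factoring through the restriction $A'|_{Z_2}\colon Z_2\to W_2$ of $A'$ in the sense of Section 2. This is routine; all of the descriptive-set-theoretic weight is carried by Lemmas \ref{cash money} and \ref{keep em saturated} together with the Bossard-genericity hypothesis, and I anticipate no real obstacle.
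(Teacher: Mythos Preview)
Your proof is correct and follows essentially the same route as the paper's: pull back the saturated analytic set $\mathcal{A}_{\cong}\cap\textbf{Sp}$ along $\Psi$ to obtain an analytic subset of $\textbf{SB}$ containing $\mbox{Space}(\mathfrak{J})$, apply the spatial Bossard-genericity hypothesis, and push the resulting universal space back into $\mathcal{L}$. If anything, your version is tidier than the paper's, which stops at exhibiting $(X,X,(d_n(X)))\in\mathcal{A}_{\cong}$ and leaves implicit both the passage back to an element of $\mathcal{A}$ itself and the verification that every $B\in\mathcal{L}$ (not merely every $B\in\mathfrak{J}$) factors through a restriction of it; you carry out both of these steps explicitly.
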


\begin{proof}
Let $\mathcal{A}\subset \mathcal{L}$ be analytic and contain an isomorphic copy of every element of  $\mathfrak{J}$. Recall that $\Psi:\textbf{SB} \to \mathcal{L}$ defined by $\Psi(X)=(X,X,(d_n(X)))$ is a Borel isomorphism and $\textbf{Sp}:=\Psi(\textbf{SB})$. Consider the isomorphic saturation $\mathcal{A}_{\cong}$ of $\mathcal{A}$. Lemma \ref{keep em saturated}, yields that $\mathcal{A}_{\cong}$ is analytic. Then $\mathcal{A}_{\cong} \cap \textbf{Sp}$ is analytic in $\mathcal{L}$ and $\Psi^{-1}(\mathcal{A}_{\cong} \cap \textbf{Sp})$ is an analytic subset of $\textbf{SB}$ containing $\mbox{Space}(\mathfrak{J})$. As $\mbox{Space}(\mathfrak{J})$ is assumed to be Bossard-generic, there is an $X \in \Psi^{-1}(\mathcal{A}_{\cong} \cap \textbf{Sp})$ which contains all separable Banach spaces, and $(X,X,(d_n(X)))\in \mathcal{A}_{\cong} \cap \textbf{Sp}$. Since  all separable Banach spaces embed in $X$, then every element of $\mathfrak{J}$ factors throught a restriction of $(X,X,(d_n(X)))$. This is the desired result.
\end{proof}

 \begin{remark} 
In general it is not true that $\text{Space}(\mathfrak{J})$ is Bossard generic whenever $\mathfrak{J}$ is Bossard generic. Indeed the strictly singular operators serve as a counterexample.
\end{remark}




We conclude this subsection with the proofs of our strongly bounded results.

\begin{proof}[Proof of Theorem \ref{strongly bounded}]
Let $\mathcal{A}$ be an analytic subset of $\mathfrak{W}$. In \cite{C-Fund}, the second author defined a $\Pi_1^1$-rank $r_{\mathfrak{W}}$ on $\mathfrak{W}$. For each $\xi<\omega_1$ let 
$$\mathscr{J}_{\omega^{\omega^\xi}}=\{(X,Y,A) \in \mathcal{L} : r_{\mathfrak{W}}(X,Y,A) \leqslant \omega^{\omega^\xi}\}.$$
Since $\mathcal{A}$ is analytic and $r_{\mathfrak{W}}$ is a $\Pi_1^1$-rank on $\mathfrak{W}$ there is a $\xi<\omega_1$ such that $\mathcal{A}\subset \mathscr{J}_{\omega^{\omega^\xi}}$. The following properties are satisfied for $\mathscr{J}_{\omega^{\omega^\xi}}$.
\begin{enumerate}
    \item $\mathscr{J}_{\omega^{\omega^\xi}}$ is a closed operator ideal \cite{C-Fund}.
    \item For each $(X,Y,A)\in \mathscr{J}_{\omega^{\omega^\xi}}$ there is a $Z \in \mbox{Space}(\mathscr{J}_{\omega^{\omega^{\xi+1}}})$ such that $(X,Y,A)$ factors through $Z$ \cite{BC-Scand}.
\end{enumerate}
The set $\textbf{Sp} \cap\mathscr{J}_{\omega^{\omega^{\xi+1}}} \subset \mathcal{L}$ is Borel. Therefore $\Psi^{-1}(\textbf{Sp} \cap\mathscr{J}_{\omega^{\omega^{\xi+1}}})$ is Borel in $\textbf{SB}$ and a subset of $\textbf{REFL}$. Since $\textbf{REFL}$ is strongly bounded there is a $W\in \textbf{REFL}$ so that each $Z \in \Psi^{-1}(\textbf{Sp} \cap\mathscr{J}_{\omega^{\omega^{\xi+1}}})$ embeds in $W$. 

Fix $(X,Y,A) \in \mathcal{A}$. We claim that $A$ factors through a restriction of $id_{W}$. By $(2)$, $A$ factors through some $Z' \in\mbox{Space}(\mathscr{J}_{\omega^{\omega^{\xi+1}}})$. But, this $Z'$ can be identified isometrically with  an element of $Z \in \Psi^{-1}(\textbf{Sp} \cap\mathscr{J}_{\omega^{\omega^{\xi+1}}})$ which embedds isomorphically into $W$. This implies that $A$ factors through a restriction of $id_{W}$.

The case of $\mathfrak{D}$ follows from a similar argument. In this case, we use several results of Brooker \cite{Br-JOT} together with the fact that $\textbf{SD}$ is strongly bounded. Namely, we note that the Szlenk index is a coanalytic rank on $\mathfrak{D}$ and that Brooker showed that every $A \in \mathfrak{D}_\xi$ factors through a space $X \in \mbox{Space}(\mathfrak{D}_{\xi+1})$.  \end{proof}







\subsection{Borel Ideals} In this final subsection we give several examples of ideals that are Borel as subset of $\mathcal{L}$. The proofs are standard. 


Many notable operator ideals are defined as certain factorizations of sequences of operators between Banach spaces. Suppose $E$ is a finite dimensional Banach space with basis $(e_i)_{i=1}^n$ and $B:E\to F$ is an operator.  For $k\in \nn$, natural numbers $p_1, \ldots, p_n$, and rational numbers $q_1, \ldots, q_n$, we let $F_B(k, p_1, \ldots, p_n;q_1, \ldots, q_n)$  denote the set of triples $(X,Y, A)\in \mathcal{L}$ such that $\|\sum_{i=1}^n q_i d_{p_i}(X)\|\leqslant k \|\sum_{i=1}^n q_i e_i\|$ and $\text{resp.\ }\|\sum_{i=1}^n q_i Be_i\|\leqslant k \|\sum_{i=1}^n q_i Ad_{p_i}(X)\|.$       It is clear that these are Borel sets.  Now if for each $s\in \nn$, $B_s:E_s\to F_s$ is an operator and $\dim E_s=l_s$, then the collection of all operators in $\mathfrak{L}$ through which the collection $(B_s)_{s=1}^\infty$ uniformly factors is $$\bigcup_{k\in \nn}\bigcap_{s\in \nn} \bigcup_{(p_1, \ldots, p_{l_s})\in \nn^{l_s}} \bigcap_{(q_1, \ldots, q_{l_s})\in \mathbb{Q}^{l_s}} F_{B_s}(k, p_1, \ldots, p_{l_s};q_1, \ldots, q_{l_s}),$$  which is Borel. With this, we note that uniform factorization of a sequence of  operators between finite dimensional spaces gives rise to Borel classes. From this we deduce the following.

\begin{corollary} Each of the following classes of operators is Borel in $\mathcal{L}$. \begin{enumerate}[(i)]\item The compact operators. \item The super weakly compact operators. \item The finitely strictly singular operators. \item The super Rosenthal operators.  \end{enumerate}
\end{corollary}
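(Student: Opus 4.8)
The plan is to reduce each of the four classes to the general observation already made in the preceding paragraph: if $(B_s)_{s=1}^\infty$ is a sequence of operators between finite-dimensional spaces, then the class of operators in $\mathcal{L}$ through which $(B_s)$ uniformly factors is Borel. For this to apply we only need, in each case, a concrete sequence of operators between finite-dimensional spaces whose uniform factorability characterizes membership in (the complement of, or in, as appropriate) the class. So the work is to recall, in each of the four cases, the relevant finitary characterization.

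First, for the compact operators: an operator $A:X\to Y$ with $X$ separable is compact if and only if $A$ is $\mathcal{S}_0$-weakly compact, equivalently (since $\mathcal{S}_0$-$\mathfrak{W}$ is the compact operators) it is \emph{not} the case that the formal inclusions $\ell_\infty^n\to\ell_\infty^n$, or equivalently the identities $I_n:\ell_\infty^n\to \ell_\infty^n$, uniformly factor through $A$ after restricting to finite-codimensional subspaces; more directly, $A$ is compact iff for every $\varepsilon>0$ there is a finite-dimensional subspace $F\leq X$ with $\|A|_{X\setminus F}\|<\varepsilon$, and this can be phrased as a Borel condition on the coding. The cleanest route is: $A\in\complement\overline{\mathfrak{F}}$ iff there exists $c>0$ and normalized vectors spanning copies of $\ell_\infty^n$ (uniformly) on which $A$ is bounded below; this is exactly the uniform factorization of the identities $I_{\ell_\infty^n}$, hence $\complement\overline{\mathfrak{F}}\cap\mathcal{L}$ is Borel (being a countable union/intersection of the $F_{B_s}$ sets), so $\overline{\mathfrak{F}}\cap\mathcal{L}$ is Borel as its complement. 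For the super weakly compact operators and the super Rosenthal operators, the characterizations are already stated verbatim in Section~3: $B\in\complement(\mathfrak{W}^{super})$ iff the finite summing operators $s_n:\ell_1^n\to\ell_\infty^n$ uniformly factor through $B$, and $B\in\complement(\mathfrak{Sl}_1^{super})$ iff the isometric embeddings $j_n:\ell_1^n\to\ell_\infty$ uniformly factor through $B$. In both cases this is precisely a ``uniform factorization of a sequence of operators between finite-dimensional spaces'' statement (note $\ell_\infty$ is not finite-dimensional, but one may replace $j_n$ by $j_n:\ell_1^n\to\ell_\infty^{N_n}$ for suitable finite $N_n$, or observe that uniform factorization through $B$ of $j_n:\ell_1^n\to\ell_\infty$ is equivalent to that of a cofinal family of finite-rank restrictions), so the complements are Borel and hence so are $\mathfrak{W}^{super}\cap\mathcal{L}$ and $\mathfrak{Sl}_1^{super}\cap\mathcal{L}$. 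For the finitely strictly singular operators, $A\in\mathfrak{FS}$ iff for every $\varepsilon>0$ there is $n$ such that every $n$-dimensional subspace of $X$ contains a vector $x$ with $\|Ax\|<\varepsilon\|x\|$; the negation, $A\in\complement\mathfrak{FS}$, is the existence of $\varepsilon>0$ so that for every $n$ there is an $n$-dimensional subspace on which $A$ is bounded below by $\varepsilon$, which again is a uniform factorization condition for a sequence of finite-dimensional identities and is Borel, so $\mathfrak{FS}\cap\mathcal{L}$ is Borel.

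Concretely, I would carry this out as follows. Step one: restate the boxed general fact from the end of Section~3, that uniform factorization of a fixed sequence $(B_s)_{s=1}^\infty$ of operators between finite-dimensional spaces defines a Borel subset of $\mathcal{L}$, and observe that by the same token the set of operators through which a cofinal collection of finite-rank restrictions of a fixed operator (into $\ell_\infty$) uniformly factor is Borel. Step two: for each of the four items, write down the finitary characterization of membership in the complement (for (i), (ii), (iv)) or in the class (for (iii)), cite the relevant source (Johnson--Lindenstrauss / \cite{CDil-Studia} for (ii), \cite{BCFrWa-JFA} for (iv), and the elementary $\varepsilon$--$n$ reformulations for (i) and (iii)), and note that in each case the condition has the form of a countable Boolean combination of the $F_{B_s}(k,\vec p;\vec q)$ sets. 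Step three: conclude Borelness of the complement, hence of the class itself (Borel sets are closed under complementation).

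The main obstacle, such as it is, is purely bookkeeping: ensuring that each characterization is genuinely finitary, i.e. only involves finitely many coordinates $d_{p_i}(X)$ and rational coefficients at a time, so that it fits the template of the boxed fact. The one mild subtlety is that the natural target space in the super-ideal characterizations is $\ell_\infty$ rather than a finite-dimensional space; this is handled by noting that $s_n$ (resp. $j_n$) uniformly factoring through $B$ is equivalent to the uniform factoring of $s_n:\ell_1^n\to\ell_\infty^n$ (resp. of a suitable finite-rank compression $j_n:\ell_1^n\to\ell_\infty^{N_n}$), which is legitimately an operator between finite-dimensional spaces. Once that is observed, all four cases are immediate applications of the general principle, and there is no deeper difficulty.
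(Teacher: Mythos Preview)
Your overall strategy matches the paper's exactly: each class (or its complement) is characterized by uniform factorization of a fixed sequence of operators between finite-dimensional spaces, and the preceding paragraph shows any such class is Borel. The paper in fact says nothing more than this and leaves the specific sequences implicit.

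However, your execution for item (i) contains a genuine error. You write that $A$ is non-compact iff there exist normalized vectors spanning uniform copies of $\ell_\infty^n$ on which $A$ is bounded below, i.e.\ iff the identities $I_{\ell_\infty^n}$ uniformly factor through $A$. This is false: the identity on $\ell_2$ is non-compact, yet $\ell_2$ has cotype $2$ and contains no uniformly isomorphic copies of $\ell_\infty^n$, so $I_{\ell_\infty^n}$ certainly do not uniformly factor through $I_{\ell_2}$. The correct sequence is the formal identities $i_n:\ell_1^n\to\ell_\infty^n$ (the finite-dimensional truncations of Johnson's universal non-compact operator). One checks directly that $A$ is non-compact iff these uniformly factor through $A$: if $i_n=D_nAC_n$ with $\|C_n\|,\|D_n\|\leqslant k$, the images $AC_ne_j$ are $1/k$-separated in $A(kB_X)$, so $A(B_X)$ is not totally bounded; the converse follows from Johnson's theorem. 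You also slip between $\overline{\mathfrak{F}}$ and $\mathfrak{K}$; the corollary concerns $\mathfrak{K}$.

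A smaller gap: for (iii) you assert that ``for every $n$ there is an $n$-dimensional subspace on which $A$ is bounded below by $\varepsilon$'' is a uniform factorization condition for a sequence of finite-dimensional identities, but you never say \emph{which} sequence. Arbitrary $n$-dimensional subspaces do not form a sequence. One needs an extra step (Dvoretzky's theorem) to reduce to a fixed sequence, e.g.\ the isometric embeddings $\ell_2^n\hookrightarrow\ell_\infty$, before the template applies. The paper alternatively notes that (ii), (iii), (iv) were already known to be Borel via coanalytic ranks bounded by $\omega+1$.
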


We remark that for the super weakly compact, finitely strictly singular,  and super Rosenthal operators, it is already known that these are Borel sets.  Indeed, this above follows from  the fact that each of these three classes is the set of operators for which a particular coanalytic rank does not exceed $\omega+1$. This was shown in  \cite{BCFrWa-JFA,C-Fund}.

Furthermore, any local property of an operator (that is, any property which is determined by checking some condition on only  finitely many vectors at a time) gives rise to a Borel class of operators, again by standard arguments. In particular, we have the following.

\begin{proposition} For any $1\leqslant p \leqslant \infty$, each of the following classes is Borel in $\mathfrak{L}$. \begin{enumerate}[(i)]\item The operators with Rademacher/Gaussian/Haar/martingale type $p$ (resp. cotype $p$). \item The uniformly convex (resp. $p$-convex) operators. \item The uniformly smooth (resp. $p$-smooth) operators. \item The absolutely $p$-summing operators. \end{enumerate}
\end{proposition}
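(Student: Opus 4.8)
The plan is to realize each of the listed classes as a countable union of countable intersections of Borel subsets of $\mathcal{L}$. This rests on the fact that every one of these properties is \emph{local}: membership is controlled by inequalities among finitely many of the quantities $\|\sum_{i=1}^n q_i d_{k_i}(X)\|$ and $\|\sum_{i=1}^n q_i y_{k_i}\|=\|\sum_{i=1}^n q_i Ad_{k_i}(X)\|$, where $(X,Y,(y_n)_{n=1}^\infty)$ ranges over $\mathcal{L}$. First I would record the two standing facts that make the reduction go through. By Lemma \ref{beer} there is a Polish topology $\tau$ on $\textbf{SB}$ inducing the Effros--Borel structure under which each $d_n$ is continuous; combining this with the Borelness of evaluation from Lemma \ref{ddbb}, for every finite choice of indices $k_1,\dots,k_n$ and rationals $q_1,\dots,q_n$ the maps $(X,Y,A)\mapsto\|\sum_i q_i d_{k_i}(X)\|$ and $(X,Y,A)\mapsto\|\sum_i q_i Ad_{k_i}(X)\|$ are Borel on $\mathcal{L}$. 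Secondly, since $\{d_n(X):n\in\nn\}$ is dense in $X$, the coded operators are norm continuous, and the moduli appearing below are monotone, any universal quantifier over vectors of $X$ (or of $B_X$) may be replaced by one over rational linear combinations of the $d_n(X)$ (of norm at most one), and any quantifier over a positive real by one over a positive rational, without changing the class.

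With these in hand I would treat type and cotype first. Fixing any finite moment exponent $r$ (permissible by Kahane's inequality), the Rademacher average $\mathbb{E}\|\sum_{i=1}^n\varepsilon_i Ax_i\|^r=2^{-n}\sum_{\theta\in\{\pm1\}^n}\|\sum_i\theta_iAx_i\|^r$ is a finite sum, so after the substitution $x_i=q_id_{k_i}(X)$ it is a Borel function of the code; hence ``$A$ has Rademacher type $p$'' is the union over $C\in\mathbb{Q}^+$ of the intersection, over all finite rational data, of the Borel sets on which $(\mathbb{E}\|\sum_i\varepsilon_iAx_i\|^r)^{1/r}\leqslant C(\sum_i\|x_i\|^p)^{1/p}$, and ``Rademacher cotype $p$'' is identical, now bounding $(\sum_i\|Ax_i\|^p)^{1/p}$ by $C(\mathbb{E}\|\sum_i\varepsilon_ix_i\|^r)^{1/r}$. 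Haar type and cotype — equivalently dyadic martingale type and cotype — are the same argument, since the $L^r([0,1];Y)$-norm of a Haar polynomial supported on dyadic levels at most $N$ is a finite average of norms over the $2^N$ dyadic atoms of level $N$. The Gaussian case needs one extra remark: $\mathbb{E}\|\sum_ig_iAx_i\|^r$ is not a finite sum, but it is a pointwise (in the code) limit of finite averages — for instance, replace each $g_i$ by $N^{-1/2}(\varepsilon_{i,1}+\dots+\varepsilon_{i,N})$ and let $N\to\infty$, using the central limit theorem and uniform integrability — hence it too is a Borel function of the code, and Gaussian type and cotype follow as before.

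Next I would handle convexity, smoothness, and the $p$-summing operators. The modulus of convexity $\delta_A(\varepsilon)$ of $A$ is an infimum, over pairs of vectors in $B_X$, of an expression continuous in those vectors, so ``$A$ is uniformly convex'' reads: for every $\varepsilon\in\mathbb{Q}^+$ there is $\delta\in\mathbb{Q}^+$ with the defining inequality holding for all rational $x,y\in B_X$, which is Borel, while ``$A$ is $p$-convex'' replaces the inner existential by a single $c\in\mathbb{Q}^+$ with $\delta\geqslant c\varepsilon^p$ for all $\varepsilon$. Uniform and $p$-smoothness are dual: the modulus of smoothness $\rho_A(t)$ is a supremum of a continuous expression, and ``$\rho_A(t)/t\to 0$'' respectively ``$\rho_A(t)\leqslant ct^p$'' is again a $\forall/\exists$ statement over rationals, hence Borel. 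For absolutely $p$-summing operators I would use the identity $\sup_{\phi\in B_{X^*}}(\sum_{i=1}^n|\phi(x_i)|^p)^{1/p}=\sup\{\|\sum_{i=1}^na_ix_i\|:\|(a_i)_{i=1}^n\|_{\ell_{p'}}\leqslant 1\}$, with $1/p+1/p'=1$, so that after substituting $x_i=q_id_{k_i}(X)$ and restricting the $a_i$ to rationals the right-hand side is a countable supremum of Borel functions of the code, hence Borel; then ``$A$ is $p$-summing'' is the union over $C\in\mathbb{Q}^+$ of the intersection, over all finite rational data, of the Borel sets where $(\sum_i\|Ax_i\|^p)^{1/p}\leqslant C$ times that quantity. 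In degenerate ranges of $p$ each class is either empty or all of $\mathcal{L}$, hence Borel; the endpoints $p=1$ and $p=\infty$ for the summing case correspond to $p'=\infty$ and $p'=1$.

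I expect the only non-routine point — hence the main obstacle — to be that two of these ``local'' quantities are not literally finite expressions in the code: the Gaussian averages and the weak-$\ell_p$ norm each conceal an extra integral or supremum, and one must verify directly that they are Borel functions of $(X,Y,A)$ via the pointwise-limit and countable-supremum representations above. Once that is done, each class is a countable Boolean combination of Borel sets by the two standing facts, and the proposition follows; the residual work is the routine verification that restricting the vector and real quantifiers to the dense and rational parameters leaves the defining conditions unchanged, which is precisely where continuity of the coded operators and monotonicity of the moduli enter.
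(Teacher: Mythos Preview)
Your proposal is correct and follows the same route as the paper: both recognize that each property is \emph{local}, hence described by inequalities involving only finitely many vectors at a time, and then express the class as a countable Boolean combination of Borel sets using the density of $\{d_n(X)\}$ and the Borelness of the basic norm functions. The paper's proof is in fact much terser than yours --- it invokes the general principle that local properties yield Borel classes ``by standard arguments,'' writes out only the $p$-summing case via the dual $\ell_q$ characterization, and leaves the remaining items to the reader --- whereas you carry out each case in detail and correctly flag the one genuinely non-finite expression (the Gaussian average) together with its pointwise-limit workaround.
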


\begin{proof} Since the proof consists of standard techniques and is similar for each of the above classes, we only offer a proof for the absolutely $p$-summing operators and leave it to the reader to fill in the details of the other classes.  We note that the subclass of $\mathcal{L}$ consisting of absolutely $p$-summing operators is given by  $$\bigcup_{k\in \nn} \bigcap_{n=1}^\infty \bigcap_{(a_i)_{i=1}^n\in \mathbb{Q}^n}  \{(X,Y, (y_i)_{i=1}^\infty): \|\sum_{i=1}^n y_i\|\leqslant k\|(a_i)_{i=1}^n\|_{\ell_q^n} \|\sum_{i=1}^n a_i d_i(X)\|\},$$ 
\end{proof} where $1/p+1/q=1.$

\bibliographystyle{abbrv}

\def\cprime{$'$} \def\cprime{$'$} \def\cprime{$'$} \def\cprime{$'$}

\end{document}